\numberwithin{equation}{section}
\numberwithin{figure}{section}
\theoremstyle{plain}
\newtheorem{thm}{\protect\theoremname}
  \theoremstyle{plain}
  \newtheorem{lem}[thm]{\protect\lemmaname}
  \theoremstyle{remark}
  \newtheorem{rem}[thm]{\protect\remarkname}
  \theoremstyle{plain}
  \newtheorem{prop}[thm]{\protect\propositionname}
  \theoremstyle{definition}
  \newtheorem{defn}[thm]{\protect\definitionname}
  \theoremstyle{definition}
  \newtheorem{example}[thm]{\protect\examplename}
  \theoremstyle{plain}
  \newtheorem{cor}[thm]{\protect\corollaryname}
\newcommand{\ad}[2]{
\begin{tikzpicture}
\foreach \p/\t/\w in {#1}
 {
  \node [below] at (\p,0) {$\scriptstyle{\w}$};
  \node at (\p,0) {\t};
 }
\foreach \s/\r/\sa/\ea in {#2}
 {
  \draw (\s,0.2) arc [radius=\r, start angle=\sa, end angle=\ea];
 }
\end{tikzpicture}}
\newcommand{\cir}{$\bullet$}
\newcommand{\x}{$\times$}
\date{\today}
  \providecommand{\corollaryname}{Corollary}
  \providecommand{\definitionname}{Definition}
  \providecommand{\examplename}{Example}
  \providecommand{\lemmaname}{Lemma}
  \providecommand{\propositionname}{Proposition}
  \providecommand{\remarkname}{Remark}
\providecommand{\theoremname}{Theorem}
\begin{document}

\title{Kac-Wakimoto character formula for the general linear Lie superalgebra}

\author{Michael Chmutov, Crystal Hoyt, Shifra Reif}

\thanks{The second author was supported in part by an Aly Kaufman Fellowship
at the Technion. The third author was partially supported by NSF RTG
grant DMS 0943832.}
\begin{abstract}
Character formulas for Lie superalgebras have been shown to have important
applications to number theory and combinatorics. We prove the Kac-Wakimoto
character formula for the general linear Lie superalgebra $\mathfrak{gl}\left(m|n\right)$.
This formula specializes to the well-known Kac-Weyl character formula
when the modules are typical and to the Weyl denominator identity
when the module is trivial. We also prove a determinantal character
formula for KW-modules using the Kac-Wakimoto character formula.
\end{abstract}
\maketitle
 \date{\currenttime} \maketitle

\section{Introduction}

It is well known that character theory for Lie superalgebras is a
non-trivial generalization of the classical theory. The search for
a Kac-Weyl type character formula has been a driving force in the
field. The heart of the problem lies in the existence of the so called
``atypical roots''. In their absence, a formula similar to the classical
Weyl character formula was proven for finite dimensional typical highest
weight modules by Kac in 1977 \cite{K1,K2,K3}. For the singly atypical
weights (those with only one atypical root), a closed formula was
proven by Bernstein and Leites for $\mathfrak{gl}\left(m|n\right)$
in 1980 using geometrical methods \cite{BL}. It was a long standing
question to generalize this formula and many people contributed, including
Van der Jeugt, Hughes, King and Thierry-Mieg who proposed a conjectural
character formula for $\mathfrak{gl}(m|n)$ in 1990 \cite{VHKT},
which was later proven by Su and Zhang in 2007 \cite{SZ1}.

In 1996, Serganova introduced the generalized Kazhdan-Lusztig polynomials
for $\mathfrak{gl}(m|n)$ and used them to give a general character
formula for finite dimensional irreducible representations of $\mathfrak{gl}\left(m|n\right)$
\cite{S1,S2}. For each $\lambda$ and $\mu$ dominant integral weights,
evaluating the Kazhdan-Lusztig polynomial $K_{\lambda,\mu}\left(q\right)$
at $q=-1$ was shown to yield the multiplicity of the Kac module $\overline{L}\left(\mu\right)$
inside the finite dimensional simple module $L\left(\lambda\right)$.
In 2003, Brundan gave an algorithm for computing the generalized Kazhdan-Lusztig
polynomials for $\mathfrak{gl}(m|n)$ in terms of weight diagrams
\cite{B}. 

In 2007, Su and Zhang used Brundan's algorithm to compute the generalized
Kazhdan-Lusztig polynomials and to prove a Kac-Weyl type character
formula for finite dimensional simple modules of $\mathfrak{gl}(m|n)$
in the standard choice of simple roots, which we refer to as the Su-Zhang
character formula \cite[4.43]{SZ1}. When the highest weight $\lambda$
is ``totally connected'' (see Definition \ref{def: totally connected}),
every non-zero Kazhdan-Lusztig polynomial is a monomial with coefficient
$1$, which drastically simplifies the Su-Zhang character formula
(\ref{eq: SZ arrow formula}) \cite[4.46]{SZ1}. 

We use the Su-Zhang character formula to prove the Kac-Wakimoto character
formula for $\mathfrak{gl}(m|n)$, which was stated by Kac and Wakimoto
in 1994 \cite{KW}.
\begin{thm}
\label{thm:main}Let $L$ be a finite dimensional simple module. For
any choice of simple roots $\pi$ and weight $\lambda,$ such that
$L=L_{\pi}(\lambda)$ and $\pi$ contains a $\lambda^{\rho}$-maximal
isotropic subset $S_{\lambda}$ we have

\begin{equation}
e^{\rho}R\cdot\mbox{ch }L_{\pi}\left(\lambda\right)=\frac{1}{r!}\sum_{w\in W}(-1)^{l(w)}w\left(\frac{e^{\lambda^{\rho}}}{\prod_{\beta\in S_{\lambda}}\left(1+e^{-\beta}\right)}\right)\label{eq:KW-1}
\end{equation}

where $r=|S_{\lambda}|$.
\end{thm}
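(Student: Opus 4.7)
The approach is to exploit the Su-Zhang character formula in a carefully chosen Borel subalgebra obtained from $\pi$ by a sequence of odd reflections, and then manipulate the resulting expression into the Kac-Wakimoto form. Both sides of (\ref{eq:KW-1}) have predictable behavior under odd reflections (the left-hand side through the transformation of $e^{\rho}R$, the right-hand side through the isotropic subset $S_\lambda$), so one has the freedom to move to a Borel that best simplifies the computation.

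The first step is to apply odd reflections to pass from $\pi$ to a Borel $\pi'$ with respect to which $L_{\pi}(\lambda)=L_{\pi'}(\lambda')$ and $\lambda'$ is totally connected in the sense of Definition \ref{def: totally connected}, with $S_\lambda$ sent to a maximal isotropic $S_{\lambda'}\subseteq\pi'$ of the same size $r$. The existence of such a $\pi'$ is the key combinatorial input provided by Brundan's weight diagrams together with the maximality of $S_\lambda$, which ensures that all atypical roots can be simultaneously placed in simple position. In this Borel the simplified Su-Zhang formula (\ref{eq: SZ arrow formula}) expresses $e^{\rho'}R'\cdot\mathrm{ch}\, L_{\pi'}(\lambda')$ as a signed monomial sum indexed by a combinatorial set of arrow diagrams, each contributing a single term with coefficient $\pm 1$.

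The second step is to match this expression with the right-hand side of (\ref{eq:KW-1}). I would expand each factor as a geometric series
\[
\frac{1}{1+e^{-\beta}} = \sum_{k\geq 0}(-1)^{k}e^{-k\beta},
\]
so that the numerator becomes a sum over tuples $(k_\beta)_{\beta\in S_{\lambda'}}\in\mathbb{Z}_{\geq 0}^{r}$. Applying the antisymmetrization $\sum_{w\in W}(-1)^{l(w)}w$ and dividing by $r!$ should, after regrouping, reproduce the Su-Zhang sum; the factor $\frac{1}{r!}$ is expected to account precisely for the overcounting coming from the subgroup of $W$ that permutes the mutually orthogonal roots of $S_{\lambda'}$. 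Finally, one transports the identity back to $\pi$ by tracking how both sides transform under the chain of odd reflections connecting $\pi$ and $\pi'$.

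The main obstacle will be carrying out the combinatorial matching in the second step rigorously: the Su-Zhang side is indexed by arrow diagrams that record which atypical roots are activated together with their signs, while the Kac-Wakimoto side, after the geometric-series expansion, is indexed by pairs consisting of a Weyl group element modulo the stabilizer and a nonnegative integer tuple. Exhibiting a sign-preserving bijection between these two indexing sets, and verifying that the $S_r$-action on $S_{\lambda'}$ is exactly the source of the $r!$-fold redundancy, is the technical heart of the argument and will likely require a careful analysis of how Weyl group elements permute the isotropic roots $\beta\in S_{\lambda'}$ and interact with the arrows in the Su-Zhang diagrams.
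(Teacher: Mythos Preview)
Your high-level strategy --- start from the Su--Zhang formula and track both sides under odd reflections --- is exactly the paper's, but the intermediate target you set up is incoherent. ``Totally connected'' (Definition~\ref{def: totally connected}) is a property of the highest weight \emph{in the standard Borel} $\pi_{st}$, and $\pi_{st}$ contains only one odd simple root; so for $r>1$ there is no $\pi'$ that simultaneously has $\lambda'$ totally connected and $S_{\lambda'}\subseteq\pi'$. These are two \emph{different} Borels: the standard one, where Su--Zhang is stated, and an admissible one, where Kac--Wakimoto is stated. The passage between them is not bookkeeping --- it is the main content of the proof.

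Concretely, the Su--Zhang formula in $\pi_{st}$ reads
\[
e^{\rho}R\cdot\mathrm{ch}\,L = \frac{(-1)^{|(\lambda_{st}^{\rho})^{\Uparrow}-\lambda_{st}^{\rho}|_{S_\lambda}}}{r!}\,\mathcal{F}_W\!\left(\frac{e^{(\lambda_{st}^{\rho})^{\Uparrow}}}{\prod_{\beta\in S_\lambda}(1+e^{-\beta})}\right),
\]
which already has the Kac--Wakimoto shape but with $(\lambda_{st}^{\rho})^{\Uparrow}$ in place of $\lambda^{\rho}$ and an extra sign. Your ``second step'' (geometric-series expansion and the $r!$ overcounting via the $S_r$-action on $S_\lambda$) is precisely how one \emph{proves} this Su--Zhang formula from the Kazhdan--Lusztig monomial sum; it does nothing to remove the $\Uparrow$ or the sign. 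What the paper actually does is design a specific sequence of odd reflections (the ``shortening algorithm'') from $\pi_{st}$ to a distinguished \emph{special} admissible $\pi$, and verify step by step that the generalized Su--Zhang expression is preserved; at the special $\pi$ all atypical entries coincide, so $(\lambda^{\rho})^{\Uparrow}=\lambda^{\rho}$ and the sign is $+1$, yielding (\ref{eq:KW-1}) there. A second, separate argument using four elementary diagram moves then carries (\ref{eq:KW-1}) from the special admissible $\pi$ to an arbitrary one. The sentence ``one transports the identity back to $\pi$'' hides both of these arguments, and the invariance you would need is not automatic: under a single odd reflection $\lambda^{\rho}$, $S_\lambda$, and the arc arrangement can all change, and one must exhibit an explicit $w\in W$ (or a cancellation identity as in Proposition~\ref{prop: moves preserve formula}) that matches the two sides at each step.
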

We call a finite dimensional simple module $L$ \emph{a KW-module
}if there exists a set of simple roots $\pi$ that satisfies the hypothesis
of Theorem \ref{thm:main}, and we call such $\pi$ an \emph{admissible}
choice of simple roots for $L$. 

Our proof goes as follows. We prove that a finite dimensional simple
module is a KW-module if and only if its highest weight with respect
to the standard choice of simple roots is totally connected (Theorem
\ref{thm:tame equivalent totally connected}), by presenting an algorithm
to move between these different bases. We generalize the arc diagrams
defined in \cite{GKMP}, and we use these diagrams to define the steps
of our algorithm, which are composed of a specified sequence of odd
reflections. We show that each step of the algorithm preserves a generalized
version of the Su-Zhang character formula, and that this generalized
formula specializes to the Kac-Wakimoto character formula when the
set of simple roots is admissible, thus proving that KW-modules for
$\mathfrak{gl}(m|n)$ are tame.

In this way, we obtain a character formula for each admissible set
of simple roots $\pi$ and $\lambda^{\rho}$-maximal isotropic subset
$S_{\lambda}\subset\pi$. We also obtain character formulas for the
sets of simple roots encountered when applying the ``shortening algorithm''
to a totally connected weight, but not for all sets of simple roots.
When the module is trivial, these character formulas specialize to
the denominator identities obtained by Gorelik, Kac, Möseneder Frajria
and Paolo-Papi in \cite{G,GKMP}.

We use the Kac-Wakimoto character formula to prove a determinantal
character formula for KW-modules (Theorem \ref{thm:determinantal character formula}),
which is motivated by the determinantal character formula proven in
\cite{MV2} for critical modules labeled by non-intersecting composite
partitions. Our determinantal character formula can be expressed using
the data of the ``special arc diagram'' for a KW-module $L$ (see
Definition \ref{def: special tame}), and is useful for computer computations.

To make the paper self-contained, we give a simplified version of
the proof of the Su-Zhang character formula in the special case that
the highest weight of the module is totally connected, i.e., a KW-module.
Along the way, we obtain another characterization of KW-modules in
terms of their Kazhdan-Lusztig polynomials (see Corollary \ref{cor:kl polys for tame}),
and we obtain the character formula of Theorem \ref{thm:other character formula in standard},
which is motivated by the denominator identity given in \cite[(1.10)]{GKMP}
for the standard choice of simple roots.

An important class of KW-modules are covariant modules. The tensor
module $T(V)$ of the $(m+n)$-dimensional natural representation
$V$ of $\mathfrak{gl}(m|n)$ is completely reducible, and its irreducible
components are called\emph{ covariant modules}. These modules and
their characters (super-Schur functions) were studied by Berele and
Regev \cite{BR}, and Sergeev \cite{S}, who in particular gave a
necessary and sufficient condition on a weight $\lambda$ in terms
of Young diagrams to be the highest weight of a covariant module for
$\mathfrak{gl}(m|n)$ (with respect to the standard set of simple
roots). Specifically, the irreducible components of the $k$-th tensor
power of $V$ are parametrized by Young diagrams of size $k$ contained
in the $\left(m,n\right)$-hook. Using this description, it is easy
to show that the highest weight of a covariant module is totally connected
(see Example \ref{Example:covariant modules have t.c. highest weight}).
Hence, it follows from Theorem \ref{thm:tame equivalent totally connected}
that covariant modules are KW-modules, which was originally proven
in \cite{MV1}.

Acknowledgments. We would like to thank the following people for useful
and interesting discussions: Jonah Blasiak, Ivan Dimitrov, Sergey
Fomin, Maria Gorelik, Thomas Lam and Ricky Liu.

\section{Preliminaries}

\subsection{The general linear Lie superalgebra.}

In this paper, $\mathfrak{g}$ will always denote the general linear
Lie superalgebra $\mathfrak{gl}(m|n)$ over the complex field $\mathbb{C}$.
As a vector space, $\mathfrak{g}$ can be identified with the endomorphism
algebra $\mbox{End}(V_{\bar{0}}\oplus V_{\bar{1}})$ of a $\mathbb{Z}_{2}$-graded
vector space $V_{\bar{0}}\oplus V_{\bar{1}}$ with $\mbox{dim }V_{\bar{0}}=m$
and $\mbox{dim }V_{\bar{1}}=n$. Then $\mathfrak{g}=\mathfrak{g}_{\bar{0}}\oplus\mathfrak{g}_{\bar{1}}$,
where 
\[
\mathfrak{g}_{\bar{0}}=\mbox{End}(V_{\bar{0}})\oplus\mbox{End}(V_{\bar{1}})\quad\mathrm{and}\quad\mathfrak{g}_{\bar{1}}=\mbox{Hom}(V_{\bar{0}},V_{\bar{1}})\oplus\mbox{Hom}(V_{\bar{1}},V_{\bar{0}}).
\]
A homogeneous element $x\in\mathfrak{g}_{\bar{0}}$ has degree $0$,
denoted $\mbox{deg}(x)=0$, while $x\in\mathfrak{g}_{\bar{1}}$ has
degree $1$, denoted $\mbox{deg}(x)=1$. We define a bilinear operation
on $\mathfrak{g}$ by letting 
\[
[x,y]=xy-(-1)^{\mathrm{deg}(x)\mathrm{deg}(y)}yx
\]
on homogeneous elements and then extending linearly to all of $\mathfrak{g}$.

By fixing a basis of $V_{\bar{0}}$ and $V_{\bar{1}}$, we can realize
$\mathfrak{g}$ as the set of $(m+n)\times(m+n)$ matrices, where
\[
\mathfrak{g}_{\bar{0}}=\left\{ \left(\begin{array}{cc}
A & 0\\
0 & B
\end{array}\right)\mid A\in M_{m,m},\ B\in M_{n,n}\right\} \text{ and }\mathfrak{g}_{\bar{1}}=\left\{ \left(\begin{array}{cc}
0 & C\\
D & 0
\end{array}\right)\mid C\in M_{m,n},\ D\in M_{n,m}\right\} ,
\]
and $M_{r,s}$ denotes the set of $r\times s$ matrices.

The Cartan subalgebra $\mathfrak{h}$ of $\mathfrak{g}$ is the set
of diagonal matrices, and it has a natural basis 
\[
\{E_{1,1},\ldots,E_{m,m};E_{m+1,m+1},\ldots,E_{m+n,m+n}\},
\]
where $E_{ij}$ denotes the matrix whose $ij$-entry is $1$ and there
are $0$'s elsewhere. Fix the dual basis $\{\varepsilon_{1},\ldots,\varepsilon_{m};\delta_{1},\ldots,\delta_{n}\}$
for $\mathfrak{h}^{*}$. We define a bilinear form on $\mathfrak{h}^{*}$
by $(\varepsilon_{i},\varepsilon_{j})=\delta_{ij}=-(\delta_{i},\delta_{j})$
and $(\varepsilon_{i},\delta_{j})=0$, and use it to identify $\mathfrak{h}$
with $\mathfrak{h}^{*}$.

Then $\mathfrak{g}$ has a root space decomposition $\mathfrak{g}=\mathfrak{h}\oplus\left(\bigoplus_{\alpha\in\Delta_{\bar{0}}}\mathfrak{g}_{\alpha}\right)\oplus\left(\bigoplus_{\alpha\in\Delta_{\bar{1}}}\mathfrak{g}_{\alpha}\right),$
where the set of roots of $\mathfrak{g}$ is $\Delta=\Delta_{\bar{0}}\cup\Delta_{\bar{1}}$,
with 
\[
\begin{aligned} & \Delta_{\bar{0}}=\{\varepsilon_{i}-\varepsilon_{j}\mid1\leq i\neq j\leq m\}\cup\{\delta_{k}-\delta_{l}\mid1\leq k\neq l\leq n\},\\
 & \Delta_{\bar{1}}=\{\pm(\varepsilon_{i}-\delta_{k})\mid1\leq i\leq m,\ 1\leq k\leq n\},
\end{aligned}
\]
and $\mathfrak{g}_{\varepsilon_{i}-\varepsilon_{j}}=\mathbb{C}E_{ij},\ \mathfrak{g}_{\delta_{k}-\delta_{l}}=\mathbb{C}E_{m+k,m+l},\ \mathfrak{g}_{\varepsilon_{i}-\delta_{k}}=\mathbb{C}E_{i,m+k},\ \mathfrak{g}_{\delta_{k}-\varepsilon_{i}}=\mathbb{C}E_{m+k,i}.$

A set of simple roots $\pi\subset\Delta$ determines a decomposition
of $\Delta$ into positive and negative roots, $\Delta=\Delta^{+}\cup\Delta^{-}$.
There is a corresponding triangular decomposition of $\mathfrak{g}$
given by $\mathfrak{g}=\mathfrak{n}^{+}\oplus\mathfrak{h}\oplus\mathfrak{n}^{-}$,
where $\mathfrak{n}^{\pm}=\oplus_{\alpha\in\Delta^{\pm}}\mathfrak{g}_{\alpha}$.
Let $\Delta_{\bar{d}}^{+}=\Delta_{\bar{d}}\cap\Delta^{+}$ for $d\in\{0,1\}$,
and define $\rho_{\pi}=\frac{1}{2}\sum_{\alpha\in\Delta_{\bar{0}}^{+}}\alpha-\frac{1}{2}\sum_{\alpha\in\Delta_{\bar{1}}^{+}}\alpha.$
Then for $\alpha\in\pi$, we have $(\rho_{\pi},\alpha)=(\alpha,\alpha)/2$.

The Weyl group of $\mathfrak{g}$ is $W=Sym_{m}\times Sym_{n}$, and
$W$ acts on $\mathfrak{h}^{*}$ by permuting the indices of the $\varepsilon$'s
and by permuting the indices of the $\delta$'s. In particular, the
even reflection $s_{\varepsilon_{i}-\varepsilon_{j}}$ interchanges
the $i$ and $j$ indices of the $\varepsilon$'s and fixes all other
indices, while $s_{\delta_{k}-\delta_{l}}$ interchanges the $k$
and $l$ indices of the $\delta$'s and fixes all other indices.

A proof of the following lemma can be found in \cite[4.1.1]{G}.
\begin{lem}
\label{lem:stabilizer has a reflection}For any $\mu\in\mathfrak{h}_{\mathbb{R}}^{*}$
the stabilizer of $\mu$ in $W$ is either trivial or contains a reflection.
\end{lem}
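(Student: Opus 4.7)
The plan is to argue directly from the explicit description of $W = Sym_m \times Sym_n$ acting by permutation on the $\varepsilon$- and $\delta$-coordinates. Writing $\mu \in \mathfrak{h}_{\mathbb{R}}^{*}$ in the dual basis as $\mu = \sum_{i=1}^{m} a_i \varepsilon_i + \sum_{k=1}^{n} b_k \delta_k$ with $a_i, b_k \in \mathbb{R}$, an element $(\sigma,\tau) \in Sym_m \times Sym_n$ fixes $\mu$ if and only if $a_{\sigma(i)} = a_i$ for all $i$ and $b_{\tau(k)} = b_k$ for all $k$.

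From this, I would observe that the stabilizer $W_\mu$ is exactly the Young subgroup $\prod_j Sym(I_j) \times \prod_l Sym(J_l)$, where $\{I_j\}$ is the partition of $\{1,\dots,m\}$ into level sets of $i \mapsto a_i$ and $\{J_l\}$ the analogous partition of $\{1,\dots,n\}$ for $b_k$. In particular $W_\mu$ is trivial precisely when every $I_j$ and every $J_l$ is a singleton, i.e., when the $a_i$'s are pairwise distinct and the $b_k$'s are pairwise distinct.

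Thus, if $W_\mu$ is non-trivial, at least one of these level sets must have cardinality at least two. Picking two distinct indices $i, i'$ in the same level set $I_j$ (or $k,k'$ in some $J_l$), the transposition $s_{\varepsilon_i - \varepsilon_{i'}}$ (respectively $s_{\delta_k - \delta_{k'}}$) is an even reflection in $W$, and it fixes $\mu$ by the above characterization. Hence $W_\mu$ contains a reflection, completing the argument.

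There is no real obstacle here: the proof is essentially a bookkeeping exercise once the coordinate description of the $W$-action established in the preliminaries is unpacked. The only subtlety worth flagging is that $W$ contains only even reflections $s_{\varepsilon_i - \varepsilon_j}$ and $s_{\delta_k - \delta_l}$ (and no reflections mixing $\varepsilon$'s with $\delta$'s), but this is exactly why the relevant transpositions always lie inside $W$ and are immediately available as soon as some level set has size $\geq 2$.
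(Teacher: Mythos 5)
Your argument is correct. Note that the paper does not give its own proof of this lemma: it simply points to \cite[4.1.1]{G}, where the statement is established for an arbitrary finite Weyl group, typically via the general theorem that the stabilizer $W_\mu$ is generated by the reflections in the roots orthogonal to $\mu$ (so a nontrivial stabilizer automatically contains a reflection). What you do instead is exploit the specific structure of $W=Sym_m\times Sym_n$ for $\mathfrak{gl}(m|n)$: writing $\mu$ in the $\varepsilon,\delta$-coordinates, the stabilizer is visibly the Young subgroup attached to the level sets of the coordinate functions, and any nontrivial Young subgroup contains a transposition $s_{\varepsilon_i-\varepsilon_{i'}}$ or $s_{\delta_k-\delta_{k'}}$. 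Both routes are sound; yours is more elementary and self-contained but special to type $A\times A$, while the cited result applies uniformly to all finite Weyl groups. One small point worth making explicit in your write-up: $(\sigma,\tau)\cdot\mu=\sum_i a_{\sigma^{-1}(i)}\varepsilon_i+\sum_k b_{\tau^{-1}(k)}\delta_k$, so the fixing condition $a_{\sigma(i)}=a_i$ you state is indeed equivalent to $a_{\sigma^{-1}(i)}=a_i$ (both say $\sigma$ preserves the level sets of $i\mapsto a_i$), but it is cleaner to record the intermediate step so a reader does not stumble on the direction of the index permutation.
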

Suppose $\beta\in\pi$ is an odd (isotropic) root. An odd reflection
$r_{\beta}$ of each $\alpha\in\pi$ is defined as follows. 
\[
r_{\beta}(\alpha)=\left\{ \begin{array}{cl}
-\alpha, & \text{if }\beta=\alpha\\
\alpha, & \text{if }(\alpha,\beta)=0\\
\alpha+\beta, & \text{if }(\alpha,\beta)\neq0
\end{array}\right.
\]
Then $r_{\beta}\pi:=\{r_{\beta}(\alpha)\mid\alpha\in\pi\}$ is also
a set of simple roots for $\mathfrak{g}$ \cite{S3}. The corresponding
root decomposition is $\Delta=\Delta'^{+}\cup\Delta'^{-}$, where
$\Delta'{}_{\bar{0}}^{+}=\Delta_{\bar{0}}^{+}$ is unchanged, and
$\Delta'{}_{\bar{1}}^{+}=\left(\Delta_{\bar{1}}^{+}\setminus\{\beta\}\right)\cup\{-\beta\}$.
Using a sequence of even and odd reflections, one can move between
any two sets of simple roots for $\mathfrak{g}$. Moreover, if $\pi$
and $\pi''$ have the property that $\Delta''{}_{\bar{0}}^{+}=\Delta_{\bar{0}}^{+}$,
then there exists a sequence of odd reflections from $\pi$ to $\pi''$.

We denote by $\pi_{st}$ the standard choice of simple roots. 
\[
\pi_{st}=\left\{ \varepsilon_{1}-\varepsilon_{2},\ldots,\varepsilon_{m-1}-\varepsilon_{m},\varepsilon_{m}-\delta_{1},\delta_{1}-\delta_{2}\ldots,\delta_{n-1}-\delta_{n}\right\} 
\]
The corresponding decomposition $\Delta=\Delta^{+}\cup\Delta^{-}$
is given by 
\begin{equation}
\Delta_{\bar{0}}^{+}=\{\varepsilon_{i}-\varepsilon_{j}\}_{1\leq i<j\leq m}\cup\{\delta_{k}-\delta_{l}\}_{1\leq k<l\leq n}\quad\text{ and}\quad\Delta_{\bar{1}}^{+}=\{\varepsilon_{i}-\delta_{k}\}_{1\leq i\leq m,\ 1\leq k\leq n}.\label{eq:set of positive roots}
\end{equation}
The standard choice $\pi_{st}$ has the unique property that $W$
fixes $\Delta_{\bar{1}}^{+}$. Moreover, $\pi_{st}$ contains a basis
for $\Delta_{\bar{0}}^{+}$, which we denote by $\pi_{\bar{0}}$.

The root lattice $Q=\sum_{\alpha\in\pi}\mathbb{Z}\alpha$ is independent
of the choice of $\pi$.  Let $Q_{\pi}^{+}=\sum_{\alpha\in\pi}\mathbb{N}\alpha$,
where $\mathbb{N}=\{0,1,2,\ldots\}$, and define a partial order on
$\mathfrak{h}^{*}$ by $\mu>\nu$ when $\mu-\nu\in Q_{st}^{+}$.
\begin{rem}
For convenience, we fix $\Delta_{\bar{0}}^{+}$ as in (\ref{eq:set of positive roots}).
This choice is arbitrary since we can relabel the indices of the $\varepsilon$'s
and $\delta$'s. We let $l(w)$ denote the length of $w\in W$ with
respect to the set of simple reflections $s_{\varepsilon_{1}-\varepsilon_{2}},\dots,s_{\varepsilon_{m-1}-\varepsilon_{m}};s_{\delta_{1}-\delta_{2}},\dots,s_{\delta_{n-1}-\delta_{n}}$
which generate $W$.
\end{rem}

\subsection{Finite dimensional modules for $\mathfrak{\mathfrak{gl(m|n)}}$}

For each set of simple roots $\pi$ and weight $\lambda\in\mathfrak{h}^{*}$,
the \emph{Verma module} of highest weight $\lambda$ is the induced
module 
\[
M_{\pi}(\lambda):=\mbox{Ind}_{\mathfrak{n}^{+}\oplus\mathfrak{h}}^{\mathfrak{g}}\mathbb{C}_{\lambda},
\]
where $\mathbb{C}_{\lambda}$ is the one-dimensional module such that
$h\in\mathfrak{h}$ acts by scalar multiplication of $\lambda(h)$
and $\mathfrak{n}^{+}$ acts trivially. The Verma module $M_{\pi}(\lambda)$
has a unique simple quotient, which we denote by $L_{\pi}(\lambda)$
or simply by $L(\lambda)$. Given $\pi$ and $\lambda$, we denote
by $\lambda^{\rho}$ or $\lambda_{\pi}^{\rho}$ 
\[
\lambda^{\rho}:=\lambda+\rho_{\pi}.
\]
If $L_{\pi}\left(\lambda\right)\cong L_{\pi'}\left(\lambda'\right)$
for some $\lambda,\lambda'\in\mathfrak{h}^{*}$ and $\pi'=r_{\beta}\pi$
for an odd reflection $r_{\beta}$ with $\beta\in\pi$, then 
\[
(\lambda')^{\rho}=\left\{ \begin{array}{cl}
\lambda^{\rho}, & \text{if }\left(\lambda,\beta\right)\ne0\\
\lambda^{\rho}+\beta, & \text{if }\left(\lambda,\beta\right)=0.
\end{array}\right.
\]

For each $\lambda\in\mathfrak{h}^{*}$, let $L_{\bar{0}}(\lambda)$
denote the simple highest weight $\mathfrak{g}_{\bar{0}}$-module
with respect to $\pi_{\bar{0}}$. The {\em Kac module} of highest
weight $\lambda$ with respect to $\pi_{st}$ is the induced module
\[
\overline{L}(\lambda):=\mbox{Ind}_{\mathfrak{g}_{\bar{0}}\oplus\mathfrak{n}_{\bar{1}}^{+}}^{\mathfrak{g}}L_{\bar{0}}(\lambda)
\]
defined by letting $\mathfrak{n}_{\bar{1}}^{+}:=\oplus_{\alpha\in\Delta_{\bar{1}}^{+}}\mathfrak{g}_{\alpha}$
act trivially on the $\mathfrak{g}_{\bar{0}}$-module $L_{\bar{0}}(\lambda)$.
The unique simple quotient of $\overline{L}(\lambda)$ is $L_{\pi_{st}}(\lambda)$.

A weight $\lambda\in\mathfrak{h}^{*}$ is called \textit{dominant}
if $\frac{2(\lambda,\alpha)}{(\alpha,\alpha)}\geq0$ for all $\alpha\in\Delta_{\bar{0}}^{+}$,
\textit{strictly dominant} if $\frac{2(\lambda,\alpha)}{(\alpha,\alpha)}>0$
for all $\alpha\in\Delta_{\bar{0}}^{+}$ and \textit{integral} if
$\frac{2(\lambda,\alpha)}{(\alpha,\alpha)}\in\mathbb{Z}$ for all
$\alpha\in\Delta_{\bar{0}}^{+}$. It is sufficient to check dominance
and integrality on the set of simple roots $\pi_{\bar{0}}\subset\Delta_{\bar{0}}^{+}$. 

The proof of following lemma is straight-forward when viewing $L_{\pi}(\lambda)$
as a $\mathfrak{g}_{\bar{0}}$-module.
\begin{lem}
If the simple module $L_{\pi}(\lambda)$ is finite dimensional, then
$\lambda$ is a dominant integral weight.
\end{lem}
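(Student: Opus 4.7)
The plan is to reduce the claim to the classical $\mathfrak{sl}_2$ representation theory by exhibiting, for each even simple root $\alpha\in\pi_{\bar 0}$, a finite-dimensional $\mathfrak{sl}_2$-module inside $L_\pi(\lambda)$ whose highest weight is exactly $\tfrac{2(\lambda,\alpha)}{(\alpha,\alpha)}$. By the remark in the text, $\lambda$ being dominant integral can be tested on $\pi_{\bar 0}$, so it suffices to show $\tfrac{2(\lambda,\alpha)}{(\alpha,\alpha)}\in\mathbb{N}$ for every $\alpha\in\pi_{\bar 0}$.

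First I would fix a highest weight vector $v_\lambda\in L_\pi(\lambda)$ with respect to the chosen $\pi$, so that $\mathfrak{n}^+\cdot v_\lambda=0$ and $h\cdot v_\lambda=\lambda(h)v_\lambda$ for $h\in\mathfrak{h}$. The key observation, recorded in the preliminaries, is that for any choice of simple roots $\pi$ of $\mathfrak{g}$ the even positive roots $\Delta_{\bar 0}^+$ are the standard (fixed) ones. Consequently, for every $\alpha\in\pi_{\bar 0}\subset\Delta_{\bar 0}^+$ the root vector $e_\alpha$ lies in $\mathfrak{n}^+$, and hence $e_\alpha\cdot v_\lambda=0$.

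Next, for each $\alpha\in\pi_{\bar 0}$, consider the copy of $\mathfrak{sl}_2$ sitting inside $\mathfrak{g}_{\bar 0}$ spanned by $e_\alpha,\ f_\alpha,\ h_\alpha:=[e_\alpha,f_\alpha]$, normalized so that $\alpha(h_\alpha)=2$ (this is possible because $(\alpha,\alpha)\neq 0$ for even roots). The $\mathfrak{sl}_2$-submodule $U(\mathfrak{sl}_2^{\alpha})\cdot v_\lambda$ of $L_\pi(\lambda)$ is finite-dimensional, since $L_\pi(\lambda)$ itself is, and $v_\lambda$ is a highest weight vector for this $\mathfrak{sl}_2$ of weight $\lambda(h_\alpha)=\tfrac{2(\lambda,\alpha)}{(\alpha,\alpha)}$. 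The classical finite-dimensional representation theory of $\mathfrak{sl}_2$ then forces this weight to be a non-negative integer.

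Running this argument over all $\alpha\in\pi_{\bar 0}$ establishes dominance and integrality on the distinguished set $\pi_{\bar 0}$, and hence on all of $\Delta_{\bar 0}^+$, completing the proof. There is no genuine obstacle here: the only subtle point is making sure that $e_\alpha$ annihilates $v_\lambda$ even though $\alpha$ may not belong to the chosen $\pi$, which is resolved by recalling that $\Delta_{\bar 0}^+$ is independent of $\pi$, so $e_\alpha\in\mathfrak{n}^+$ regardless.
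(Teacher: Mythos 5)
Your argument is correct and is exactly the argument the paper has in mind: the paper says only that the claim is "straight-forward when viewing $L_\pi(\lambda)$ as a $\mathfrak{g}_{\bar 0}$-module," and your $\mathfrak{sl}_2$-reduction is the standard way to make that remark precise, using the fact that $\Delta_{\bar 0}^+$ (and hence $\pi_{\bar 0}\subset\mathfrak{n}^+$) is fixed under the paper's convention so that $v_\lambda$ remains a $\mathfrak{g}_{\bar 0}$-highest weight vector.
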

For a proof of the following proposition see for example \cite[14.1.1]{M}.
\begin{prop}
\label{sub:rho^st strictly dominant}For $\mathfrak{g}=\mathfrak{gl}(m|n)$
and $\lambda\in\mathfrak{h}^{*}$, the following are equivalent: \end{prop}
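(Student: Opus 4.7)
The plan is to prove the equivalence by completing a cycle of implications among the natural candidate conditions: (i) $\lambda$ is dominant integral, (ii) $\lambda^{\rho}=\lambda+\rho_{st}$ is strictly dominant integral with respect to $\Delta_{\bar{0}}^{+}$, and (iii) $L_{\pi_{st}}(\lambda)$ is finite dimensional. The implication (iii) $\Rightarrow$ (i) is free, since it is exactly the content of the preceding lemma applied to $\pi=\pi_{st}$, so no new work is needed there.

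For (i) $\Rightarrow$ (ii), I would carry out the explicit computation of $\rho_{st}=\rho_{\bar{0}}-\rho_{\bar{1}}$ in the $\varepsilon,\delta$ coordinates. Using $\rho_{\bar{0}}=\sum_{i}\tfrac{m+1-2i}{2}\varepsilon_{i}+\sum_{k}\tfrac{n+1-2k}{2}\delta_{k}$ and $\rho_{\bar{1}}=\tfrac{n}{2}\sum_{i}\varepsilon_{i}-\tfrac{m}{2}\sum_{k}\delta_{k}$, one checks directly that $\tfrac{2(\rho_{st},\alpha)}{(\alpha,\alpha)}=1$ for every simple even root $\alpha\in\pi_{\bar{0}}$. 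Combined with integrality and dominance of $\lambda$ on $\pi_{\bar{0}}$, this forces $\tfrac{2(\lambda^{\rho},\alpha)}{(\alpha,\alpha)}$ to be a positive integer on each $\alpha\in\pi_{\bar{0}}$, which is sufficient since dominance and integrality only need to be checked on the simple even roots, as remarked in the text.

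For (ii) $\Rightarrow$ (iii) (which is really (i) $\Rightarrow$ (iii), since the previous step makes (i) and (ii) equivalent), I would pass through the Kac module. When $\lambda$ is dominant integral, $L_{\bar{0}}(\lambda)$ is finite dimensional by the classical Weyl theorem for $\mathfrak{g}_{\bar{0}}=\mathfrak{gl}(m)\oplus\mathfrak{gl}(n)$. The Kac module $\overline{L}(\lambda)=\mathrm{Ind}_{\mathfrak{g}_{\bar{0}}\oplus\mathfrak{n}_{\bar{1}}^{+}}^{\mathfrak{g}}L_{\bar{0}}(\lambda)$ is then free of finite rank over $U(\mathfrak{n}_{\bar{1}}^{-})$, which is itself finite dimensional since $\mathfrak{n}_{\bar{1}}^{-}$ is an odd abelian subalgebra of dimension $mn$ (so $U(\mathfrak{n}_{\bar{1}}^{-})\cong\Lambda(\mathfrak{n}_{\bar{1}}^{-})$). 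Hence $\overline{L}(\lambda)$ is finite dimensional, and its unique simple quotient $L_{\pi_{st}}(\lambda)$ is too.

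The main obstacle, such as it is, lies entirely in Step 2: the coordinate computation for $\rho_{st}$ and verifying that the contribution from $\rho_{\bar{1}}$ (which is a multiple of $\sum\varepsilon_i$ minus a multiple of $\sum\delta_k$, and thus annihilates every $\alpha\in\pi_{\bar{0}}$ in the bilinear form) does not disturb the shift by $1$ coming from $\rho_{\bar{0}}$. Once this is in hand, the rest of the argument is a straightforward assembly of the cited lemma and the standard finite-dimensionality of the Kac module.
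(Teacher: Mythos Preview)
Your argument is correct and is essentially the standard one. Note, however, that the paper does not actually prove this proposition: it states the five equivalent conditions and then refers the reader to \cite[14.1.1]{M} for a proof. So there is no ``paper's own proof'' to compare against beyond that citation.

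One small remark on presentation: the proposition as stated in the paper lists five conditions, while your cycle runs through only three of them (dominant integral, $\lambda^{\rho}$ strictly dominant integral, $L_{\pi_{st}}(\lambda)$ finite dimensional). But your proof of (i)$\Rightarrow$(iii) already passes through the other two --- finite-dimensionality of $L_{\bar 0}(\lambda)$ and of the Kac module $\overline{L}(\lambda)$ --- as intermediate steps, and the reverse implications (finite-dimensional Kac module $\Rightarrow$ finite-dimensional simple quotient; finite-dimensional $L_{\pi_{st}}(\lambda)$ $\Rightarrow$ finite-dimensional $L_{\bar 0}(\lambda)$ as the $\mathfrak g_{\bar 0}$-submodule generated by the highest weight vector) are immediate. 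So all five conditions are indeed covered by what you wrote; it would just be worth saying so explicitly.
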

\begin{enumerate}
\item the simple highest weight $\mathfrak{g}$-module $L_{\pi_{st}}(\lambda)$
is finite dimensional; 
\item the Kac module $\overline{L}(\lambda)$ is finite dimensional; 
\item the simple highest weight $\mathfrak{g}_{\bar{0}}$-module $L_{\bar{0}}(\lambda)$
is finite dimensional; 
\item $\lambda$ is a dominant integral weight; 
\item $\lambda_{st}^{\rho}$ is a strictly dominant integral weight. 
\end{enumerate}

\subsection{Atypical modules.\label{sub:Atypical-modules.}}

The \textit{atypicality} of $L_{\pi}(\lambda)$ is the maximal number
of linearly independent roots $\beta_{1},...,\beta_{r}$ such that
$\left(\beta_{i},\beta_{j}\right)=0$ and $\left(\lambda^{\rho},\beta_{i}\right)=0$
for $i,j=1,\ldots,r$. Such a set $S=\left\{ \beta_{1},\ldots,\beta_{r}\right\} $
is called a \textit{$\lambda^{\rho}$-maximal isotropic set}. The
module $L_{\pi}(\lambda)$ is called \textit{typical} if this set
is empty, and \textit{atypical} otherwise. The atypicality of a simple
finite dimensional module is independent of the choice of simple roots.
\begin{defn}
We call a simple finite dimensional module $L$ \textit{\emph{a }}\textit{KW-module}
if $L\cong L_{\pi}(\lambda)$ for some choice of simple roots $\pi$
which contains a $\lambda^{\rho}$-maximal isotropic set $S\subset\pi$.
In this case, we call such $\pi$ an \emph{admissible}\textit{ }\textit{\emph{choice
of simple roots}}\emph{ }for $L$.
\end{defn}
Let $P$ denote the set of integral weights, $P^{+}$ the set of dominant
integral weights, and define 
\[
\mathbb{P}^{+}=\{\mu\in P^{+}\mid(\mu_{\pi}^{\rho},\varepsilon_{i})\in\mathbb{Z},\ (\mu_{\pi}^{\rho},\delta_{j})\in\mathbb{Z}\}.
\]
Note that the definition of $\mathbb{P}^{+}$ is independent of the
choice of $\pi$, since changing the set of simple roots by an odd
reflection, only changes the entries of $\lambda_{\pi}^{\rho}$ by
integer values.
\begin{rem}
\label{all integers}When studying the character of a simple finite
dimensional atypical module, we may restrict to the case that $\lambda\in\mathbb{P}^{+}$.
Indeed, let $\lambda\in P^{+}$, then the module $L_{\pi}(\lambda)$
is atypical if and only if $(\lambda_{\pi}^{\rho},\varepsilon_{i})=(\lambda_{\pi}^{\rho},\delta_{j})$
for some $(\varepsilon_{i}-\delta_{j})\in\Delta_{\bar{1}}$. So, by
tensoring $L_{\pi}(\lambda)$ with a $1$-dimensional module with
character $e^{c\left(\sum_{i=1}^{m}\varepsilon_{i}-\sum_{j=1}^{n}\delta_{j}\right)}$
for appropriate $c\in\mathbb{C}$, we obtain a module $L_{\pi}(\lambda')$
with $\lambda'\in\mathbb{P}^{+}$.
\end{rem}
Fix a set of simple roots $\pi$, a weight $\lambda\in\mathbb{P}^{+}$
and a $\lambda^{\rho}$-maximal isotropic set $S_{\lambda}\subset\Delta_{\bar{1}}^{+}$.
Write 
\begin{equation}
\lambda_{\pi}^{\rho}=\sum_{i=1}^{m}a_{i}\varepsilon_{i}-\sum_{j=1}^{n}b_{j}\delta_{j}.\label{eq:lambda rho expansion}
\end{equation}
We refer to the coefficient $a_{i}$ (resp. $b_{j}$) as the\textit{
$\varepsilon_{i}$-entry} (resp. $\delta_{j}$-entry). If $\pm(\varepsilon_{k}-\delta_{l})\in S_{\lambda}$,
then we call the $\varepsilon_{k}$ and \textrm{$\delta_{l}$} entries
\emph{atypical}. Otherwise, an entry is called \textit{typical}. 

We denote by $(\lambda_{\pi}^{\rho})^{\Uparrow}$ the element obtained
from $\lambda_{\pi}^{\rho}$ by replacing all its atypical entries
by the maximal atypical entry. Note that this can depend on choice
of $S_{\lambda}$ and not only on $\lambda_{\pi}^{\rho}$. However,
for $\pi_{st}$ and $\lambda\in\mathbb{P}^{+}$, there is a unique
$S_{\lambda}\subset\Delta^{+}$. If $\nu\in\mathfrak{h}^{*}$ can
be written as $\nu=\sum_{\alpha\in S_{\lambda}}k_{\alpha}\alpha$,
then we define 
\[
\left|\nu\right|_{S_{\lambda}}:=\sum_{\alpha\in S_{\lambda}}k_{\alpha}.
\]
Observe that $|(\lambda_{\pi}^{\rho})^{\Uparrow}-\lambda_{\pi}^{\rho}|_{S_{\lambda}}$
is non-negative integer.

\subsection{Arc diagrams}

We generalize the arc diagrams defined in \cite{GKMP}. Let $L$ be
a finite dimensional atypical module. For each set of simple roots
$\pi$, weight $\lambda\in\mathbb{\mathbb{P}}^{+}$ such that $L=L_{\pi}\left(\lambda\right)$
and $\lambda^{\rho}$-maximal isotropic set $S_{\lambda}\subset\Delta_{\bar{1}}^{+}$,
there is an arc diagram that encodes the data: $\pi$, $\lambda_{\pi}^{\rho}$
and $S_{\lambda}$.

In order to define the arc diagram corresponding to the data $(\pi,\lambda_{\pi}^{\rho},S_{\lambda})$,
we first define a total order on the set $\{\varepsilon_{1},\ldots,\varepsilon_{m}\}\cup\{\delta_{1},\ldots,\delta_{n}\}$
determined by $\Delta^{+}$. In particular, $\varepsilon_{i}<\varepsilon_{i+1}$,
$\delta_{j}<\delta_{j+1}$ and for each $i$ and $j$ we let
\begin{alignat*}{1}
\delta_{j}<\varepsilon_{i}, & \text{\hspace{1em}if }(\delta_{j}-\varepsilon_{i})\in\Delta^{+},\\
\varepsilon_{i}<\delta_{j}, & \text{\hspace{1em}if }(\varepsilon_{i}-\delta_{j})\in\Delta^{+}.
\end{alignat*}
Let $T=\{\gamma_{1},\ldots,\gamma_{m+n}\}$ be this totally ordered
set, and express $\lambda_{\pi}^{\rho}$ as in (\ref{eq:lambda rho expansion}).
The \emph{nodes} and \emph{entries} of the diagram are determined
from left to right by $\gamma_{k}\in T$, $k=1,\ldots,m+n$, by putting
the node $\bullet$ labeled with the entry $a_{i}$ if $\gamma_{k}=\varepsilon_{i}$,
and by putting the node $\times$ labeled with the entry $b_{j}$
if $\gamma_{k}=\delta_{j}$. The set $S_{\lambda}$ determines an
\emph{arc arrangement} as follows. The $\varepsilon_{i}-$node $\bullet$
and the $\delta_{j}$-node $\times$ are connected by an arc when
$\pm(\varepsilon_{i}-\delta_{j})\in S_{\lambda}$, and in this case
$a_{i}=b_{j}$. A node and its entry are called \emph{atypical} if
the node is connected by some arc, and \emph{typical} otherwise.
\begin{example}
Suppose 
\begin{eqnarray*}
\pi & = & \left\{ \varepsilon_{1}-\varepsilon_{2},\varepsilon_{2}-\delta_{1},\delta_{1}-\varepsilon_{3},\varepsilon_{3}-\delta_{2},\delta_{2}-\delta_{3},\delta_{3}-\varepsilon_{4}\right\} \\
\lambda_{\pi}^{\rho} & = & 7\varepsilon_{1}+5\varepsilon_{2}+5\varepsilon_{3}+2\varepsilon_{4}-5\delta_{1}-6\delta_{2}-7\delta_{3}\\
S_{\lambda} & = & \left\{ \varepsilon_{1}-\delta_{3},\delta_{1}-\varepsilon_{3}\right\} .
\end{eqnarray*}
Then the corresponding arc diagram will be: \begin{equation*}
\begin{tikzpicture} 
\node at (0,0) {$\bullet$};
\node [below] at (0,0) {$\scriptstyle 7$};
\node at (1,0) {$\bullet$};
\node [below] at (1,0) {$\scriptstyle 5$};
\node at (2,0) {$\times$};
\node [below] at (2,0) {$\scriptstyle 5$};
\node at (3,0) {$\bullet$};
\node [below] at (3,0) {$\scriptstyle 5$};
\node at (4,0) {$\times$};
\node [below] at (4,0) {$\scriptstyle 6$};
\node at (5,0) {$\times$};
\node [below] at (5,0) {$\scriptstyle 7$};
\node at (6,0) {$\bullet$};
\node [below] at (6,0) {$\scriptstyle 2$};

\draw (3,0.2) arc [radius=0.65, start angle=40, end angle= 140];
\draw (5,0.2) arc [radius=5, start angle=60, end angle= 120];

\end{tikzpicture}
\end{equation*}
\end{example}
We can recover the data $(\pi,\lambda_{\pi}^{\rho},S_{\lambda})$
from an arc diagram as follows. Label the $\bullet$-nodes from left
to right by $\varepsilon_{1},\ldots,\varepsilon_{m}$, and the $\times$-nodes
by $\delta_{1},\ldots,\delta_{n}$. Let $T=\{\gamma_{1},\ldots,\gamma_{m+n}\}$
be the ordered set determined by this labeling. Then $\pi=\{\gamma_{1}-\gamma_{2},\gamma_{2}-\gamma_{3},\ldots,\gamma_{m+n-1}-\gamma_{m+n}\}$,
$\lambda_{\pi}^{\rho}$ is given by (\ref{eq:lambda rho expansion}),
where $a_{i}$ is the $\varepsilon_{i}-$entry and $b_{j}$ is the
$\delta_{j}$-entry, and $S_{\lambda}=\{\gamma_{i}-\gamma_{j}\mid i<j\text{ and }\gamma_{i}\text{ is connected by an arc to }\gamma_{j}\}$. 
\begin{rem}
\label{rem: arc diagram entries}All entries of an arc diagram are
integers, and adjacent $\bullet$ entries are strictly decreasing,
while adjacent $\times$ entries are strictly increasing, because
$\lambda\in\mathbb{P}^{+}$ and $\lambda_{\pi}^{\rho}=\lambda+\rho_{\pi}$
(see Remark \ref{all integers}).
\end{rem}

\begin{rem}
We call the arc diagram for the standard choice of simple roots the
\emph{standard arc diagram}. For the standard arc diagram, there is
only one possible arc arrangement and all arcs are ``nested''. See
for example Diagram (\ref{diagram example in standard choice}).
\end{rem}

Since $S_{\lambda}\subset\Delta_{\bar{1}}^{+}$, an arc is always
connected to a $\bullet$ -node and a $\times$-node. We call an arc
\emph{short} if the $\bullet$ -node and $\times$-node are adjacent.
We say that an arc has $\bullet-\times$ type if the $\bullet$ precedes
the $\times$, and $\times-\bullet$ type if the $\times$ precedes
the $\bullet$. Note that no two arcs can share an endpoint, since
$S_{\lambda}$ is a $\lambda^{\rho}$-maximal isotropic set. Moreover,
an arc diagram by definition has a maximal arc arrangement, that is,
it is not possible to add an arc to the diagram between typical $\bullet$
and $\times$ nodes with equal entries.

Adjacent $\bullet$ and $\times$ nodes correspond to an odd simple
root $\beta$, and applying the odd reflection $r_{\beta}$ swaps
these nodes. In terms of the diagram it means that if $a\ne b$, then\begin{equation}\label{odd reflection of diagrams}
\begin{aligned}
&\begin{tikzpicture} 
\node at (0,0) {$\bullet$};
\node  at (-1,0) {$\dots$};
\node [below] at (0,0) {$\scriptstyle a$};
\node at (1,0) {$\times$};
\node [below] at (1,0) {$\scriptstyle b$};
\node at (2,0) {$\dots$};
\node at (4,0) {$\stackrel{r_{\beta}}{\longleftrightarrow}$};
\node at (6,0) {$\dots$};

\node at (7,0) {$\times$};
\node [below] at (7,0) {$\scriptstyle b$};
\node at (8,0) {$\bullet$};
\node [below] at (8,0) {$\scriptstyle a$};
\node  at (9,0) {$\dots$};
\end{tikzpicture}\\
&\begin{tikzpicture} 
\node at (0,0) {$\bullet$};
\node  at (-1,0) {$\dots$};
\node [below] at (0,0) {$\scriptstyle a$};
\node at (1,0) {$\times$};
\node [below] at (1,0) {$\scriptstyle a$};
\node at (2,0) {$\dots$};
\node at (4,0) {$\stackrel{r_{\beta}}{\longleftrightarrow}$};
\node at (6,0) {$\dots$};

\node  at (7,0) {$\times$};
\node [below] at (7,0) {$\scriptstyle {a+1}$};
\node at (8,0) {$\bullet$};
\node [below] at (8,0) {$\scriptstyle {a+1}$};
\node  at (9,0) {$\dots$};
\end{tikzpicture}
\end{aligned}
\end{equation}while all other nodes and entries are unchanged.

\subsection{Weight diagrams}

Weight diagrams are a convenient way to work with the highest weight
of a module with respect to the standard choice of simple roots. They
were introduced by Brundan and Stroppel in \cite{BS} and were used
to give algorithmic character formulas for basic classical Lie superalgebras
in \cite{GS} (see also \cite[5.1]{SZ2}). 

Let $\lambda\in\mathbb{P}^{+}$ and write $\lambda_{st}^{\rho}$ as
in (\ref{eq:lambda rho expansion}). On the $\mathbb{Z}$-lattice,
put $\times$ above $t$ if $t\in\left\{ a_{i}\right\} \cap\left\{ b_{j}\right\} $,
put $>$ above $t$ if $t\in\left\{ a_{i}\right\} \backslash\left\{ b_{j}\right\} $,
and put $<$ above $t$ if $t\in\left\{ b_{i}\right\} \backslash\left\{ a_{j}\right\} $.
If $t\not\in\{a_{i}\}\cup\{b_{j}\}$, then we refer to the place holder
above $t$ as an \emph{empty spot}. 
\begin{example}
If 
\begin{eqnarray*}
\lambda_{st}^{\rho} & = & 10\varepsilon_{1}+9\varepsilon_{2}+7\varepsilon_{3}+5\varepsilon_{4}+4\varepsilon_{5}-\delta_{1}-4\delta_{2}-6\delta_{3}-7\delta_{4}
\end{eqnarray*}
then the corresponding weight diagram $D_{\lambda}$ is
\begin{equation}
\stackrel{}{-1}\quad\stackrel{}{0}\quad\stackrel{<}{1}\quad\stackrel{}{2}\quad\stackrel{}{3}\quad\stackrel{\times}{4}\quad\stackrel{>}{5}\quad\stackrel{<}{6}\quad\stackrel{\times}{7}\quad\stackrel{}{8}\quad\stackrel{>}{9}\quad\stackrel{>}{10}\quad\stackrel{}{11}\quad\stackrel{}{12}.\label{eq:totally connected weight diagram}
\end{equation}

\end{example}

\subsection{Characters for Category $\mathcal{O}$}

Let $M$ be a module from the BGG category $\mathcal{O}$ \cite[8.2.3]{M}.
Then $M$ has a weight space decomposition $M=\oplus_{\mu\in\mathfrak{h}^{*}}M_{\mu}$,
where $M_{\mu}=\{x\in M\mid h.x=\mu(h)x\text{ for all }h\in\mathfrak{h}^{*}\}$,
and the \emph{character of $M$} is by definition $\mathrm{ch}\ M=\sum_{\mu\in\mathfrak{\mathfrak{h}^{*}}}dim\ M_{\mu}\ e^{\mu}.$ 

Denote by $\mathcal{\mathcal{E}}$ the algebra of rational functions
$\mathbb{Q}(e^{\nu},\ \nu\in\mathfrak{h}^{*})$. The group $W$ acts
on $\mathcal{E}$ by mapping $e^{\nu}$ to $e^{w(\nu)}$. Corresponding
to a choice of positive roots $\Delta^{+}$, the \emph{Weyl denominator
of $\mathfrak{g}$} is defined to be 
\[
R=\frac{{\Pi_{\alpha\in\Delta_{\bar{0}}^{+}}(1-e^{-\alpha})}}{{\Pi_{\alpha\in\Delta_{\bar{1}}^{+}}(1+e^{-\alpha})}}.
\]
Then $e^{\rho}R$ is $W$-skew-invariant, i.e. $w(e^{\rho}R)=(-1)^{l(w)}e^{\rho}R$,
and $\mathrm{ch}\ L(\lambda)$ is $W$-invariant for $\lambda\in P^{+}$.
The character of a Verma module is $\mathrm{ch}\ M(\lambda)=e^{\lambda}R$.
The character of the Kac module for $\lambda\in P^{+}$ is 
\begin{equation}
\mathrm{ch\ }\overline{L}(\lambda)=\frac{1}{e^{\rho}R}\sum_{w\in W}(-1)^{l(w)}w(e^{\lambda^{\rho}})\label{eq:Kac character}
\end{equation}

For $X\in\mathcal{E}$, we define 
\[
\mathcal{F}_{W}(X):=\sum_{w\in W}(-1)^{l(w)}w(X).
\]

\begin{lem}
\label{lem:reflection yields F=00003D0}Let $X\in\mathcal{E}$. If
$\sigma(X)=X$ for some reflection $\sigma\in W$, then $F_{W}(X)=0$.\end{lem}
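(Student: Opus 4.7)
The plan is to exploit the fact that the set $\{e,\sigma\}$ is a subgroup of $W$ (since $\sigma^{2}=e$) and partition $W$ into its left cosets modulo this subgroup, so that the alternating sum $\mathcal{F}_{W}(X)$ breaks up into pairs of canceling terms.

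Concretely, I would choose a set of left coset representatives $w_{1},\dots,w_{k}\in W$ for $W/\langle\sigma\rangle$, so that
\[
W=\bigsqcup_{i=1}^{k}\{w_{i},\,w_{i}\sigma\}.
\]
Then
\[
\mathcal{F}_{W}(X)=\sum_{i=1}^{k}\Bigl((-1)^{l(w_{i})}w_{i}(X)+(-1)^{l(w_{i}\sigma)}w_{i}\sigma(X)\Bigr).
\]
Using the hypothesis $\sigma(X)=X$, the second summand in each pair equals $(-1)^{l(w_{i}\sigma)}w_{i}(X)$, so each pair collapses to $\bigl((-1)^{l(w_{i})}+(-1)^{l(w_{i}\sigma)}\bigr)w_{i}(X)$.

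The remaining step is to observe that the two signs are opposite: since $\sigma$ is a reflection in the Coxeter group $W=Sym_{m}\times Sym_{n}$ with simple reflections as specified in the Remark, $\sigma$ has odd length (it is a transposition, conjugate to a simple reflection), and multiplying by $\sigma$ therefore flips the parity of the length, i.e.\ $(-1)^{l(w_{i}\sigma)}=-(-1)^{l(w_{i})}$. Each pair thus contributes zero, and hence $\mathcal{F}_{W}(X)=0$.

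There is no serious obstacle here; the one point to be careful about is the sign statement $(-1)^{l(w\sigma)}=-(-1)^{l(w)}$ for an arbitrary (not necessarily simple) reflection $\sigma$. This is a standard Coxeter-group fact, most easily justified either by observing that the sign character $w\mapsto(-1)^{l(w)}$ is a group homomorphism which must send every reflection to $-1$, or directly in our concrete setting by writing $\sigma$ as a transposition in a symmetric group.
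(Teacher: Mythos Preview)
Your proof is correct and takes essentially the same approach as the paper: the paper's one-line argument $2\mathcal{F}_{W}(X)=\mathcal{F}_{W}(X)+\mathcal{F}_{W}(\sigma(X))=\mathcal{F}_{W}(X)+(-1)^{l(\sigma)}\mathcal{F}_{W}(X)=0$ amounts to the same pairing $w\leftrightarrow w\sigma$, just packaged as a global reindexing rather than an explicit coset decomposition. Both versions rest on the same sign fact $(-1)^{l(w\sigma)}=-(-1)^{l(w)}$ for a reflection $\sigma$, which you correctly justify.
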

\begin{proof}
$2\mathcal{F}_{W}(X)=\mathcal{F}_{W}(X)+\mathcal{F}_{W}(\sigma(X))=\mathcal{F}_{W}(X)+(-1)^{l(\sigma)}\mathcal{F}_{W}(X)=0$.
\end{proof}

\section{Highest weights of  KW-modules}

We describe the highest weights and arc diagrams of  KW-modules with
respect to different choices of simple roots.

\subsection{An admissible choice of simple roots. }

Let us characterize the highest weight $\lambda$ of a  KW-module
$L$ with respect to an admissible choice of simple roots $\pi$ and
a $\lambda^{\rho}$-maximal isotropic subset $S\subset\pi$.
\begin{lem}
\label{lem:Tame structure}Consider two arcs in the arc diagram of
$\lambda^{\rho}$ with no arcs between them. Up to a reflection along
these arcs, the corresponding subdiagram has one of the following
forms.\\
\begin{eqnarray}
&\label{subdiagram 1 of admissible}\begin{tikzpicture} 
\node  at (1,0) {$\bullet$};
\node [below] at (1,0) {$\scriptstyle{a+k}$};
\node  at (2,0) {$\times$};
\node [below] at (2,0) {$\scriptstyle{a+k}$};
\node  at (3,0) {$\bullet$};
\node [below] at (3,0) {$\scriptstyle{a+k}$};
\node  at (4,0) {$\bullet$};
\node [below] at (4,0) {$\scriptstyle{a+k-1}$};
\node [below] at (5,0) {$\dots$};
\node  at (6,0) {$\bullet$};
\node [below] at (6,0) {$\scriptstyle{a+1}$};
\node  at (7,0) {$\bullet$};
\node [below] at (7,0) {$\scriptstyle{a}$};
\node  at (8,0) {$\times$};
\node [below] at (8,0) {$\scriptstyle{a}$};
\draw (2,0.2) arc [radius=1, start angle=60, end angle= 120];
\draw (8,0.2) arc [radius=1, start angle=60, end angle= 120];
\end{tikzpicture}\\
&\label{subdiagram 2 of admissible}\begin{tikzpicture} 
\node  at (1,0) {$\bullet$};
\node [below] at (1,0) {$\scriptstyle{a}$};
\node  at (2,0) {$\times$};
\node [below] at (2,0) {$\scriptstyle{a}$};
\node  at (3,0) {$\times$};
\node [below] at (3,0) {$\scriptstyle{a+1}$};
\node  at (4,0) {$\times$};
\node [below] at (4,0) {$\scriptstyle{a+2}$};
\node [below] at (5,0) {$\dots$};
\node  at (6,0) {$\times$};
\node [below] at (6,0) {$\scriptstyle{a+k}$};
\node  at (7,0) {$\bullet$};
\node [below] at (7,0) {$\scriptstyle{a+k}$};
\node  at (8,0) {$\times$};
\node [below] at (8,0) {$\scriptstyle{a+k}$};
\draw (2,0.2) arc [radius=1, start angle=60, end angle= 120];
\draw (8,0.2) arc [radius=1, start angle=60, end angle= 120];
\end{tikzpicture}
\end{eqnarray}Equivalently, the neighboring $\bullet$ and $\times$ entries are
equal, and the entries between the two arcs are either all of $\bullet$-type
and are increasing consecutive integers or are all of $\times$-type
and are decreasing consecutive integers. Moreover, diagrams (\ref{subdiagram 1 of admissible})
and (\ref{subdiagram 2 of admissible}) can not both be a subdiagram
of the same admissible diagram.\end{lem}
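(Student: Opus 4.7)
My plan is to reduce the subdiagram to canonical form via odd reflections along each arc and then exploit the tension between the dominance of $\lambda$ (which imposes strict monotonicity on adjacent like-type entries) and the maximality of $S_\lambda$ (which forbids equal entries on a typical $\bullet$- and a typical $\times$-node). The maximality observation is the main combinatorial tool: if a typical $\bullet$ at $\varepsilon_i$ and a typical $\times$ at $\delta_j$ shared an entry, the odd root $\varepsilon_i-\delta_j$ would be orthogonal to and linearly independent from $S_\lambda$ (its endpoints are not arc-endpoints), hence adjoinable, contradicting $\lambda^\rho$-maximality. Applying the odd reflection of (\ref{odd reflection of diagrams}) to each arc if needed, I may assume both arcs are of $\bullet\times$-type; denote the left arc at positions $(p,p+1)$ with common entry $A$, the right arc at $(q,q+1)$ with common entry $C$, set $k:=A-C$, and let $u,t$ denote the numbers of $\bullet$- and $\times$-nodes in the intermediate range $(p+1,q)$.

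Next I record the dominance bounds. Using $(\rho_\pi,\alpha)=(\alpha,\alpha)/2$ for $\alpha\in\pi$ and decomposing even roots into simple roots, for any two $\bullet$-nodes separated in $T$ by $N$ further $\bullet$'s and $M$ $\times$'s one has $a_i-a_{i'}\geq N+1-M$, and symmetrically $b_{j'}-b_j\geq M+1-N$ for two $\times$-nodes. Applied to the outer pair of $\bullet$'s at $p,q$ this yields $k\geq u-t$, and to the outer pair of $\times$'s at $p+1,q+1$ it yields $k\leq u-t$. So $u-t=k$ and every intermediate inequality between consecutive same-type nodes must be an equality.

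The key step is ruling out that the intermediate contains both types. From the tight equalities one derives closed formulas: labelling the $\bullet$'s of $[p,q]$ at positions $P_0=p<\cdots<P_{u+1}=q$ with entries $e_i$ and the $\times$'s of $[p+1,q+1]$ at $Q_0=p+1<\cdots<Q_{t+1}=q+1$ with entries $F_j$, one obtains $e_i=A+P_i-p-2i$ and $F_j=A+p+1+2j-Q_j$. If the intermediate were mixed, it would contain an adjacent typical pair of opposite types, either $\bullet$ at $P_i$ followed by $\times$ at $Q_j=P_i+1$, or the reverse. In the first case the number of $\times$'s strictly between $p$ and $P_i$ equals $j$, giving $P_i=p+i+j$ and hence $e_i=F_j=A+j-i$; the reverse case similarly gives $e_i=F_j=A+j+1-i$. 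Either way, a typical $\bullet$ and typical $\times$ share an entry, contradicting maximality as noted above.

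Hence the intermediate is homogeneous. If all $\bullet$, then $k\geq 0$ and the tight adjacent-$\bullet$ equalities force each consecutive entry to decrease by exactly one, producing the descending sequence of (\ref{subdiagram 1 of admissible}); symmetrically, all $\times$ gives (\ref{subdiagram 2 of admissible}), with $k=0$ the common degenerate case. For the moreover statement, suppose two consecutive arc pairs share an arc $\alpha$ and are of forms (\ref{subdiagram 1 of admissible}) and (\ref{subdiagram 2 of admissible}) respectively, with $k\geq 1$ in both: then the typical $\bullet$-node adjacent to $\alpha$ on the (\ref{subdiagram 1 of admissible})-side and the typical $\times$-node adjacent to $\alpha$ on the (\ref{subdiagram 2 of admissible})-side both carry entry $A_\alpha+1$, yielding another maximality violation. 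The main obstacle is the position-counting identity in the third paragraph; once that is set up, the whole argument is driven by tightness of the dominance bounds and maximality of $S_\lambda$.
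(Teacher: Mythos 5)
Your proof is correct in its main thrust, but it takes a genuinely different route from the paper. The paper's proof first shows that $\lambda$ itself (rather than $\lambda^{\rho}$) is constant on the range spanned by the two arcs: since the arcs are simple isotropic roots they are orthogonal to $\rho$, and since they lie in $S_{\lambda}$ they are orthogonal to $\lambda^{\rho}$, hence orthogonal to $\lambda$; dominance of $\lambda$ then squeezes all the $\lambda$-entries in the range to be equal. After that, the $\rho$-contribution alone determines the $\lambda^{\rho}$-entries node by node (adjacent opposite-type nodes get equal $\rho$-entries because their difference is a simple isotropic root; consecutive same-type nodes shift by $\pm 1$), and maximality rules out an adjacent typical $\bullet\times$ pair in the interior. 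You instead work directly with $\lambda^{\rho}$: you derive the inequality $a_i - a_{i'}\ge N+1-M$ (correctly, from dominance of $\lambda$ and the fact that the $\rho$-pairing with a sum of simple roots contributes exactly $N+1-M$), show it is tight by pairing the two applications to the outer $\bullet$'s and the outer $\times$'s (using the arc orthogonality $a_{p}=b_{p+1}$, $a_{q}=b_{q+1}$), and derive closed formulas for each entry in terms of its position. The maximality contradiction then comes from the position-counting identity. Both proofs are valid; the paper's separation of $\lambda$ and $\rho$ is cleaner, while yours is more explicit and computational but amounts to reproving that $\lambda$ is constant along the way.

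Two small remarks on the ``moreover'' part. First, your claim that both nodes carry entry $A_{\alpha}+1$ only holds when $\alpha$ is the right arc of the form-(\ref{subdiagram 1 of admissible}) gap and the left arc of the form-(\ref{subdiagram 2 of admissible}) gap (a valley in the arc-entry sequence); in the peak configuration (form (\ref{subdiagram 2 of admissible}) on the left, form (\ref{subdiagram 1 of admissible}) on the right) the two relevant typical nodes carry entry $A_{\alpha}$, though the same maximality contradiction results. Second, you assume the two nontrivial gaps share an arc; if they are separated by a run of degenerate gaps (equal arc entries with no nodes in between), the same argument works but the typical $\bullet$ and typical $\times$ are adjacent to \emph{different} arcs in the run, so that case needs to be mentioned. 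Note, however, that the paper's own proof does not explicitly address the ``moreover'' clause at all, so your treatment, even with these minor gaps, is in fact more complete on that point.
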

\begin{proof}
We shall analyze the entries of $\lambda$ and $\rho$ separately.

Let us show that all the entries of $\lambda$ in the range of the
subdiagram are equal. We write $\lambda=\sum_{i=1}^{m}a_{i}\varepsilon_{i}-\sum_{i=1}^{n}b_{i}\delta_{i}$
and suppose that the arcs in diagram correspond to the simple isotropic
roots $\alpha=\varepsilon_{i}-\delta_{i'}$ and $\beta=\varepsilon_{j}-\delta_{j'}$,
with $i<j$, $i'<j'$. Since $\alpha$ and $\beta$ are simple and
isotropic, they are orthogonal to $\rho$ and hence to $\lambda$.
This implies that $a_{i}=b_{i'}$ and $a_{j}=b_{j'}$. Since $\lambda$
is dominant, $a_{i}\ge a_{i+1}\ge\dots\ge a_{j}$ and $b_{i'}\le b_{i'+1}\le\dots\le b_{j'}$.
Hence, $a_{i}=\dots=a_{j}=b_{i'}=\dots=b_{j'}$. 

It follows that two entries in the subdiagram are equal if and only
if they are equal in $\rho$. Since $\rho$ is orthogonal to all simple
isotropic roots, the $\bullet$ and $\times$ entries are equal when
adjacent. Hence, at least one of them must be connected with an arc,
since otherwise we contradict the maximality property of the arc arrangement.
Therefore, the entries between the two arcs are either all of $\bullet$-type
or all of $\times$-type.

The difference between two consecutive entries of the subdiagram of
$\lambda^{\rho}$ will be as in $\rho$, since all such entries of
$\lambda$ are equal. In particular, they should decrease (resp. increase)
by $1$ whenever we have consecutive $\bullet$-entries (resp. $\times$-entries).
\end{proof}
The following definition was introduced in \cite[Prop. 1]{MV2} for
the standard choice of simple roots.
\begin{defn}
Let $\pi$ be any set of simple roots and let $\lambda\in\mathfrak{h}^{*}$.
Write $\lambda^{\rho}=\sum_{i=1}^{m}a_{i}\varepsilon_{i}-\sum_{i=1}^{n}b_{i}\delta_{i}$.
Suppose $S_{\lambda}=\left\{ \varepsilon_{m_{i}}-\delta_{n_{i}}\right\} _{i=1}^{r}$
is a $\lambda^{\rho}$-maximal isotropic set ordered so that $a_{m_{1}}\leq a_{m_{2}}\leq\ldots\leq a_{m_{r}}$.
We say that $\lambda^{\rho}$ satisfies the \emph{interval property}
if all the integers between $a_{m_{1}}$ and $a_{m_{r}}$ (equivalently,
between $b_{n_{1}}$ and $b_{n_{r}}$) are contained in the set $\left\{ a_{i},b_{j}\right\} _{i=1,\ldotp,m;j=1,\ldots,n}$. \end{defn}
\begin{cor}
\label{cor:tame has interval property}Let $L$ be a  KW-module and
let $\lambda$ be its highest weight with respect to an admissible
choice of simple roots. Then $\lambda^{\rho}$ satisfies the interval
property. \end{cor}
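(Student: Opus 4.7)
The plan is to reduce the interval property to a direct, repeated application of Lemma \ref{lem:Tame structure}. First I would list the arcs in the arc diagram of $\lambda^{\rho}$ from left to right as $A_1,\ldots,A_r$ and set $c_i$ to be the common entry at the two endpoints of $A_i$ (these two entries are indeed equal because the simple isotropic root $\beta_i \in S_\lambda$ corresponding to $A_i$ is orthogonal to $\lambda^{\rho}$). The multiset $\{c_1,\ldots,c_r\}$ then coincides with $\{a_{m_1},\ldots,a_{m_r}\}=\{b_{n_1},\ldots,b_{n_r}\}$, although not necessarily in the same order — the mismatch between ``diagram order'' and ``value order'' is the only small subtlety to track.

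For each consecutive pair $A_i,A_{i+1}$, which by construction has no arcs strictly between them, Lemma \ref{lem:Tame structure} forces the subdiagram to have the form (\ref{subdiagram 1 of admissible}) or (\ref{subdiagram 2 of admissible}). In either case the intermediate entries run through all consecutive integers strictly between $c_i$ and $c_{i+1}$, so every integer in the interval
\[
J_i := [\min(c_i,c_{i+1}),\,\max(c_i,c_{i+1})]
\]
appears as some $a_j$ or $b_k$.

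I would then close the argument by a short connectivity step. The intervals $J_1,\ldots,J_{r-1}$ form a chain in which $J_i$ and $J_{i+1}$ share the endpoint $c_{i+1}$, so $\bigcup_{i=1}^{r-1} J_i$ is itself an interval; since it contains every $c_i$, it equals $[\min_i c_i,\max_i c_i]=[a_{m_1},a_{m_r}]$. Every integer in this range therefore lies in some $J_i$ and is realized as an entry of $\lambda^{\rho}$, which is exactly the interval property.

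The bulk of the work is already carried by Lemma \ref{lem:Tame structure}; the only mild obstacle is keeping the left-to-right order on the diagram distinct from the sorted order on the $a_{m_i}$, which the chain-of-intervals argument finesses without requiring a monotonicity statement. In particular, the last sentence of Lemma \ref{lem:Tame structure} (that the two subdiagram types cannot both occur) is not needed for this corollary.
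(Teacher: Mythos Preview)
Your proposal is correct and matches the paper's approach: the corollary is stated immediately after Lemma~\ref{lem:Tame structure} with no separate proof, so the paper treats it as precisely the direct consequence of that lemma which you have spelled out. Your chain-of-intervals step, reconciling the left-to-right diagram order of the arcs with the numerical order of the atypical entries, is exactly the small detail needed to make the deduction airtight.

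One minor imprecision worth tightening: the lemma gives the subdiagram shape only ``up to a reflection along these arcs,'' and such a reflection shifts an arc entry by one, so the intermediate entries are not literally the integers \emph{strictly} between $c_i$ and $c_{i+1}$ in every orientation. The cleanest fix is to note (as in the lemma's proof) that consecutive entries in the subdiagram differ by $0$ or $\pm 1$; the walk of entries from $c_i$ to $c_{i+1}$ therefore hits every integer in $J_i$ regardless of arc orientation. This does not affect the validity of your argument.
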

\begin{rem}
Note that the interval property is not a property of a module. In
particular, if $\pi_{1}$, $\pi_{2}$ are two choices of simple roots
and $\lambda_{1},\lambda_{2}\in\mathfrak{h}^{*}$ are such that $=L_{\pi_{1}}\left(\lambda_{\pi_{1}}\right)=L_{\pi_{2}}\left(\lambda_{\pi_{2}}\right)$,
we can have that $\lambda_{\pi_{1}}^{\rho}$ satisfies the interval
property but $\lambda_{\pi_{2}}^{\rho}$ doesn't. For example,

\begin{equation*}\begin{tikzpicture} 
\node [below] at (0,0) {$\scriptstyle 4$};
\node at (0,0) {$\bullet$};
\node at (1,0) {$\times$};
\node [below] at (1,0) {$\scriptstyle  3$};
\node at (2,0) {$\bullet$};
\node [below] at (2,0) {$\scriptstyle  3$};
\node at (3,0) {$\times$};
\node [below] at (3,0) {$\scriptstyle 4$};
\node at (4,0){$\stackrel{r_{\delta_{1}-\varepsilon_{2}}}{\longrightarrow}$};
\draw (2,0.2) arc [radius=0.65, start angle=40, end angle= 140];
\draw (3,0.2) arc [radius=3, start angle=60, end angle= 120];
\node at (5,0) {$\bullet$};
\node [below] at (5,0) {$\scriptstyle  4$};
\node at (6,0) {$\bullet$};
\node [below] at (6,0) {$\scriptstyle  2$};
\node at (7,0) {$\times$};
\node [below] at (7,0) {$\scriptstyle  2$};
\node at (8,0) {$\times$};
\node [below] at (8,0) {$\scriptstyle  4$};
\draw (7,0.2) arc [radius=0.65, start angle=40, end angle= 140];
\draw (8,0.2) arc [radius=3, start angle=60, end angle= 120];
\end{tikzpicture}\end{equation*} 
\end{rem}

\subsection{Totally connected weights in the standard choice.}

We prove a criterion for a module to be a KW-module given its highest
weight $\lambda_{st}$ with respect to the standard choice of simple
roots $\pi_{st}$. The following definition is equivalent to the one
given in \cite{SZ1}, which was first observed in \cite{MV2}.
\begin{defn}
\label{def: totally connected}Let $\lambda_{st}\in\mathbb{P}^{+}$.
We say that $\lambda_{st}$ is \emph{totally connected} if $\lambda_{st}^{\rho}$
satisfies the interval property with respect to $\pi_{st}$. \end{defn}
\begin{rem}
In terms of weight diagrams, this is equivalent to the condition that
there are no empty spots between the $\times$'s. For example, diagram
(\ref{eq:totally connected weight diagram}) is totally connected
whereas diagram (\ref{eq:example of a non totally connnected diagram})
is not.\end{rem}
\begin{example}
\label{Example:covariant modules have t.c. highest weight}The highest
weight of a covariant module is totally connected. Indeed, such a
module corresponds to a partition $\mu$ of $k$ that lies in the
$\left(m,n\right)$-hook, i.e. $\mu_{m+1}\leq n$ \cite{BR,S}. The
corresponding covariant module is $L_{\pi_{st}}\left(\lambda\right)$,
where\\
\[
\lambda=\mu_{1}\varepsilon_{1}+\ldots+\mu_{m}\varepsilon_{m}+\tau_{1}\delta_{1}+\ldots+\tau_{n}\delta_{n}
\]
$\tau_{j}=\mathrm{max}\{0,\mu_{j}'-m\}$ for $j=1,\ldots,n$, and
$\mu_{j}'$ is the length of the $j$ -th column (see for example
\cite[Section V]{VHKT}). Then $\tau_{j}=0$ for $j>\mu_{m}$ and
the arc diagram of $\lambda^{\rho}$ is  \begin{equation*}\begin{tikzpicture} 
\node at (0,0) {$\bullet$};
\node [below] at (0,0) {$\scriptstyle {\mu_1+m}$};
\node  at (1,0) {$\dots$};
\node at (2,0) {$\bullet$};
\node [below] at (2,0) {$\scriptstyle \mu_m+1$};
\node at (3.5,0) {$\times$};
\node [below] at (3.5,0) {$\scriptstyle{-\tau_1+1}$};
\node at (4.5,0) {$\ldots$};

\node  at (5.5,0) {$\times$};
\node [below] at (5.5,0) {$\scriptstyle{-\tau_{\mu_m}+\mu_m}$};
\node  at (7,0) {$\times$};
\node [below] at (7,0) {$\scriptstyle {\mu_m+1}$};
\node  at (8,0) {$\times$};
\node [below] at (8,0) {$\scriptstyle {\mu_m+1}$};
\node at (9,0) {$\dots$};

\node  at (10,0) {$\times$};
\node[below] at (10,0) {$\scriptstyle n$};

\end{tikzpicture}\end{equation*} Since $\mu_{m}+1>-\tau_{\mu_{m}}+\mu_{m}$, there are no arcs connected
to the first $\mu_{m}$ $\times$-nodes. Since the rest of the $\times$-entries
are consecutive integers, $\lambda^{\rho}$ satisfies the interval
property and hence $\lambda$ is totally connected.
\end{example}

\begin{thm}
\label{thm:tame equivalent totally connected}The finite dimensional
simple module $L$ is  a KW-module if and only if its highest weight
with respect to the standard choice of simple roots is totally connected.
\begin{proof}[Proof of Theorem \ref{thm:tame equivalent totally connected} ``$\Rightarrow$'']
We start with an arc diagram that corresponds to an admissible choice
of simple roots, that is, all arcs are short, and we move to the standard
arc diagram by applying a sequence of odd reflections which preserve
the interval property. We begin with reflecting along all the arcs
which are of $\times-\bullet$ type, and get another admissible choice
of simple roots, for which the interval property holds by Corollary
\ref{cor:tame has interval property}. 

Now we push all the $\times$-entries to the right one at a time,
starting with the right most $\times$-entry. All of our reflections
are along consecutive $\times$ and $\bullet$ entries, and at each
reflection there are several cases. If the entries below the $\times$
and the $\bullet$ are not equal, then the reflection does change
them and the interval property is clearly preserved. 

If the $\times$ and $\bullet$ entries are equal then at least one
of them is connected to an arc, since otherwise the number of arcs
could be increased contradicting the maximality property of the arc
arrangement. So there are three possibilities, namely, either the
$\bullet$-entry or the $\times$-entry is connected to an arc, or
both. In each case, we reflect at the consecutive $\times-\bullet$
entries and then arrange the arcs to be of $\bullet-\times$ type
as follows.\begin{equation*}
\begin{aligned}
&\begin{tikzpicture} 
\node at (0,0) {$\dots$};
\node at (1,0) {$\times$};
\node [below] at (1,0) {$\scriptstyle a$};
\node at (2,0) {$\bullet$};
\node [below] at (2,0) {$\scriptstyle a$};
\node at (3,0) {$\dots$};
\node at (4,0) {$\times$};
\node [below] at (4,0) {$\scriptstyle a$};
\node  at (5,0) {$\dots$};
\draw (2,0.2) arc [radius=2, start angle=120, end angle= 60];
\end{tikzpicture} &\longrightarrow\qquad&\begin{tikzpicture} 
\node at (0,0) {$\dots$};
\node at (1,0) {$\bullet$};
\node [below] at (1,0) {$\scriptstyle {a-1}$};
\node at (2,0) {$\times$};
\node [below] at (2,0) {$\scriptstyle {a-1}$};
\node at (3,0) {$\dots$};
\node at (4,0) {$\times$};
\node  [below] at (4,0) {$\scriptstyle a$};
\node  at (5,0) {$\dots$};
\draw (1,0.2) arc [radius=1, start angle=120, end angle= 60];
\end{tikzpicture} \\
&\begin{tikzpicture} 
\node at (0,0) {$\dots$};
\node at (1,0) {$\bullet$};
\node [below] at (1,0) {$\scriptstyle a$};
\node at (2,0) {$\dots$};
\node [below] at (3,0) {$\scriptstyle a$};
\node at (3,0) {$\times$};
\node at (4,0) {$\bullet$};
\node [below] at (4,0) {$\scriptstyle a$};
\node  at (5,0) {$\dots$};
\draw (1,0.2) arc [radius=2, start angle=120, end angle= 60];
\end{tikzpicture} & \longrightarrow\qquad&\begin{tikzpicture} 
\node at (0,0) {$\dots$};
\node at (1,0) {$\bullet$};
\node [below] at (1,0) {$\scriptstyle a$};
\node at (2,0) {$\dots$};
\node [below] at (3,0) {$\scriptstyle {a-1}$};
\node at (3,0) {$\bullet$};
\node at (4,0) {$\times$};
\node [below] at (4,0) {$\scriptstyle {a-1}$};
\node  at (5,0) {$\dots$};
\draw (3,0.2) arc [radius=1, start angle=120, end angle= 60];
\end{tikzpicture} \\
&\begin{tikzpicture} 
\node at (0.5,0) {$\dots$};
\node at (1,0) {$\bullet$};
\node [below] at (1,0) {$\scriptstyle a$};
\node at (2,0) {$\dots$};
\node [below] at (3,0) {$\scriptstyle a$};
\node at (3,0) {$\times$};
\node at (4,0) {$\bullet$};
\node [below] at (4,0) {$\scriptstyle a$};
\node  at (5,0) {$\dots$};
\node at (6,0) {$\times$};
\node [below] at (6,0) {$\scriptstyle a$};
\node  at (6.5,0) {$\dots$};
\draw (1,0.2) arc [radius=2, start angle=120, end angle= 60];
\draw (4,0.2) arc [radius=2, start angle=120, end angle= 60];
\end{tikzpicture}& \longrightarrow\qquad&\begin{tikzpicture} 
\node at (0.5,0) {$\dots$};
\node at (1,0) {$\bullet$};
\node [below] at (1,0) {$\scriptstyle a$};
\node at (2,0) {$\dots$};
\node [below] at (3,0) {$\scriptstyle {a-1}$};
\node at (3,0) {$\bullet$};
\node at (4,0) {$\times$};
\node [below] at (4,0) {$\scriptstyle {a-1}$};
\node  at (5,0) {$\dots$};
\node at (6,0) {$\times$};
\node [below] at (6,0) {$\scriptstyle a$};
\node  at (6.5,0) {$\dots$};
\draw (1,0.2) arc [radius=5, start angle=120, end angle= 60];
\draw (3,0.2) arc [radius=1, start angle=120, end angle= 60];
\end{tikzpicture}
\end{aligned}
\end{equation*}

In each case, an atypical entry of $a-1$ was added, but the interval
property is preserved since we still have an $a$ in the diagram.
The maximality property is also preserved since the new diagram has
the same number of arcs as the old diagram. Moreover, the arcs of
the new diagram are again non-intersecting and of $\bullet-\times$
type. We proceed to the next $\times$, and continue until each $\times$
is to the right of every $\bullet$.
\end{proof}
\end{thm}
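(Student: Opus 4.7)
The plan is to translate the equivalence into a statement about arc diagrams: a KW-module corresponds to a diagram in which \emph{all arcs are short} (a simple root of $\pi$ is literally an arc in the diagram), while a totally connected highest weight corresponds to the \emph{standard} arc diagram satisfying the interval property. Both directions will then be carried out by exhibiting explicit sequences of odd reflections that move between these two classes of diagrams, tracking invariants along the way; by the results of Section~2, applying the odd reflection attached to a simple root $\beta$ of a diagram amounts to swapping adjacent $\bullet$ and $\times$ nodes according to rule (\ref{odd reflection of diagrams}), while preserving the module $L$.

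For the ``$\Rightarrow$'' direction (which the author has begun), start from an admissible diagram so that all arcs are short. By Corollary~\ref{cor:tame has interval property}, the interval property holds at this diagram. First normalize by reflecting along each $\times\!-\!\bullet$ arc so that every arc becomes of $\bullet\!-\!\times$ type. Then push the $\times$-nodes to the right one at a time, starting from the rightmost. Each step is a reflection at adjacent $\bullet$ and $\times$ nodes; if the entries differ, the diagram changes trivially; if they are equal, at least one of the two nodes must be connected to an arc (else maximality of the arc arrangement is violated), and the three resulting subcases are handled exactly as in the case analysis already begun, each time creating an entry $a-1$ while keeping an $a$ somewhere in the range, so that the interval property survives and the total number of arcs is unchanged. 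When no $\times$ lies to the left of any $\bullet$, the diagram is the standard one; its weight is then totally connected by definition.

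For the ``$\Leftarrow$'' direction, start from the standard arc diagram of a totally connected $\lambda_{st}^{\rho}$. Since all arcs of the standard diagram are nested and the interval property supplies a consecutive run of matched entries, I would run a \emph{shortening algorithm}: among all arcs, pick an innermost one and use a sequence of odd reflections at adjacent $\times$ and $\bullet$ pairs to move its two endpoints toward each other. The $\bullet\!-\!\times$ reflection rule (\ref{odd reflection of diagrams}) shows that moving a $\times$ past a $\bullet$ with larger entry (resp.\ smaller entry) does not destroy the atypical matching, because the interval property guarantees that the intermediate entries fill in the needed integer values. Once the innermost arc is short, remove it from consideration and repeat on the next innermost arc; the interval property for the remaining arcs is inherited at each stage. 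The algorithm terminates when every arc is short, at which point the current $\pi$ contains $S_\lambda$ and is therefore admissible.

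The main obstacle is the $\Leftarrow$ direction: one must check that the shortening moves are always legal, i.e.\ that at each reflection the two nodes being swapped are genuinely adjacent in the current diagram, that the arcs above them are not destroyed, and that the interval property is inherited after the innermost arc has been shortened and removed. The key input is the interval property itself, which supplies precisely the intermediate integer entries needed so that reflecting an atypical $\times$ past a typical $\bullet$ (or vice versa) produces a new diagram that is still an arc diagram for $L$ with a valid maximal isotropic set. A careful bookkeeping of entries, organized by sweeping the innermost arc inward and then discarding it, should make both termination and preservation of the maximal arc arrangement transparent.
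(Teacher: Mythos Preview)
Your treatment of the ``$\Rightarrow$'' direction matches the paper's proof essentially step for step.

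The ``$\Leftarrow$'' direction, however, has a genuine gap. Your proposed shortening algorithm is: shorten the innermost arc by pushing its endpoints together, then ``remove it from consideration'' and repeat on the next arc out. The problem is that once the innermost arc $\bullet_a\,\times_a$ is short, it sits \emph{inside} the second arc, and to shorten the second arc you must clear everything between its endpoints---including this short pair. But odd reflections only swap adjacent $\bullet$ and $\times$ nodes; you cannot move the short pair $\bullet_a\,\times_a$ past a same-type neighbour (two $\bullet$'s or two $\times$'s cannot be swapped), and if you break the pair apart by pushing $\bullet_a$ right and $\times_a$ left through the outer arc's endpoints, the inner arc is no longer short. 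So the arcs cannot be shortened independently in the way you describe, and ``remove it from consideration'' is not an option.

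The paper's actual algorithm avoids this entirely. After clearing the typical entries from under the innermost arc so that it is short with value $a$, one reflects \emph{along} that short arc (and, inductively, along all the short arcs obtained so far). This increases each atypical entry from $a+k$ to $a+k+1$. The interval property guarantees that $a+k+1$ already occurs somewhere adjacent---either as a typical $\bullet$ on the left, a typical $\times$ on the right, or as the next arc's endpoints---and in each case the arcs can be re-drawn as short $\bullet\!-\!\times$ pairs at the new value, with any unmatched typical node pushed outside. Thus the block of short arcs grows outward, absorbing one integer level at a time, until it reaches the outermost arc. The key idea you are missing is that the short arcs are raised in value together rather than each long arc being shortened in place.
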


To prove the other direction, we introduce an algorithm to move from
a totally connected weight in the standard choice of simple roots
to an admissible choice of simple roots, which we call the \emph{shortening
algorithm}. Let's first illustrate our algorithm with an example.
\begin{example}
\label{exp:taming}Let us take the following totally connected weight
and show a sequence of odd reflections that bring it to an admissible
choice of simple roots.\begin{equation}\label{diagram example in standard choice}\begin{tikzpicture} 
\node at (0,0) {$\bullet$};
\node [below] at (0,0) {$\scriptstyle 8$};
\node at (1,0) {$\bullet$};
\node [below] at (1,0) {$\scriptstyle 7$};
\node at (2,0) {$\bullet$};
\node [below] at (2,0) {$\scriptstyle 6$};
\node at (3,0) {$\bullet$};
\node [below] at (3,0) {$\scriptstyle 5$};
\node at (4,0) {$\bullet$};
\node [below] at (4,0) {$\scriptstyle 1$};
\node  at (5,0) {$\times$};
\node [below] at (5,0) {$\scriptstyle 2$};
\node  at (6,0) {$\times$};
\node [below] at (6,0) {$\scriptstyle 5$};
\node  at (7,0) {$\times$};
\node[below] at (7,0) {$\scriptstyle 7$};
\node  at (8,0) {$\times$};
\node[below] at (8,0) {$\scriptstyle 11$};
\node  at (9,0) {$\times$};
\node[below] at (9,0) {$\scriptstyle 14$};

\draw (3,0.2) arc [radius=3, start angle=120, end angle= 60];
\draw (1,0.2) arc [radius=6, start angle=120, end angle= 60];

\end{tikzpicture}\end{equation}Since the innermost entries $1$ and $2$ are different from all the
other entries, using the odd reflection defined in (\ref{odd reflection of diagrams})
we can move them outside the arcs to the right and left, respectively.\begin{equation*}\begin{tikzpicture} 
\node at (0,0) {$\bullet$};
\node [below] at (0,0) {$\scriptstyle 8$};
\node  at (1,0) {$\times$};
\node [below] at (1,0) {$\scriptstyle 2$};
\node at (2,0) {$\bullet$};
\node [below] at (2,0) {$\scriptstyle 7$};
\node at (3,0) {$\bullet$};
\node [below] at (3,0) {$\scriptstyle 6$};
\node at (4,0) {$\bullet$};
\node [below] at (4,0) {$\scriptstyle 5$};
\node  at (5,0) {$\times$};
\node [below] at (5,0) {$\scriptstyle 5$};
\node  at (6,0) {$\times$};
\node [below] at (6,0) {$\scriptstyle 7$};
\node at (7,0) {$\bullet$};
\node[below] at (7,0) {$\scriptstyle 1$};
\node  at (8,0) {$\times$};
\node[below] at (8,0) {$\scriptstyle 11$};
\node  at (9,0) {$\times$};
\node[below] at (9,0) {$\scriptstyle 14$};

\draw (2,0.2) arc [radius=4, start angle=120, end angle= 60];
\draw (4,0.2) arc [radius=1, start angle=120, end angle= 60];

\end{tikzpicture}\end{equation*}Next, we apply the odd reflection $s_{\varepsilon_{4}-\delta_{2}}$,
and then choose the arc arrangement to be of $\bullet-\times$ type
so that we can move the extra $6$ outside of the arcs.\begin{equation}\label{diagram example after reflecting}\begin{tikzpicture} 
\node at (0,0) {$\bullet$};
\node [below] at (0,0) {$\scriptstyle 8$};
\node  at (1,0) {$\times$};
\node [below] at (1,0) {$\scriptstyle 2$};
\node at (2,0) {$\bullet$};
\node [below] at (2,0) {$\scriptstyle 7$};
\node at (3,0) {$\bullet$};
\node [below] at (3,0) {$\scriptstyle 6$};
\node  at (4,0) {$\times$};
\node [below] at (4,0) {$\scriptstyle 6$};
\node at (5,0) {$\bullet$};
\node [below] at (5,0) {$\scriptstyle 6$};
\node  at (6,0) {$\times$};
\node [below] at (6,0) {$\scriptstyle 7$};
\node at (7,0) {$\bullet$};
\node[below] at (7,0) {$\scriptstyle 1$};
\node  at (8,0) {$\times$};
\node[below] at (8,0) {$\scriptstyle 11$};
\node  at (9,0) {$\times$};
\node[below] at (9,0) {$\scriptstyle 14$};

\draw (2,0.2) arc [radius=4, start angle=120, end angle= 60];
\draw (3,0.2) arc [radius=1, start angle=120, end angle= 60];

\end{tikzpicture}\end{equation} Then we move the $6$ $\bullet$-entry to the right outside of the
arcs.\begin{equation}\label{diagram example after one step}\begin{tikzpicture} 
\node at (0,0) {$\bullet$};
\node [below] at (0,0) {$\scriptstyle 8$};
\node  at (1,0) {$\times$};
\node [below] at (1,0) {$\scriptstyle 2$};
\node at (2,0) {$\bullet$};
\node [below] at (2,0) {$\scriptstyle 7$};
\node at (3,0) {$\bullet$};
\node [below] at (3,0) {$\scriptstyle 6$};
\node  at (4,0) {$\times$};
\node [below] at (4,0) {$\scriptstyle 6$};
\node  at (5,0) {$\times$};
\node [below] at (5,0) {$\scriptstyle 7$};
\node at (6,0) {$\bullet$};
\node [below] at (6,0) {$\scriptstyle 6$};
\node at (7,0) {$\bullet$};
\node[below] at (7,0) {$\scriptstyle 1$};
\node  at (8,0) {$\times$};
\node[below] at (8,0) {$\scriptstyle 11$};
\node  at (9,0) {$\times$};
\node[below] at (9,0) {$\scriptstyle 14$};

\draw (2,0.2) arc [radius=3, start angle=120, end angle= 60];
\draw (3,0.2) arc [radius=1, start angle=120, end angle= 60];

\end{tikzpicture}\end{equation}Finally, we apply $s_{\varepsilon_{3}-\delta_{2}}$, and then arrange
the arcs to be short, obtaining an admissible choice of simple roots.\begin{equation*}\begin{tikzpicture} 
\node at (0,0) {$\bullet$};
\node [below] at (0,0) {$\scriptstyle 8$};
\node  at (1,0) {$\times$};
\node [below] at (1,0) {$\scriptstyle 2$};
\node at (2,0) {$\bullet$};
\node [below] at (2,0) {$\scriptstyle 7$};
\node  at (3,0) {$\times$};
\node [below] at (3,0) {$\scriptstyle 7$};
\node at (4,0) {$\bullet$};
\node [below] at (4,0) {$\scriptstyle 7$};
\node  at (5,0) {$\times$};
\node [below] at (5,0) {$\scriptstyle 7$};
\node at (6,0) {$\bullet$};
\node [below] at (6,0) {$\scriptstyle 6$};
\node at (7,0) {$\bullet$};
\node[below] at (7,0) {$\scriptstyle 1$};
\node  at (8,0) {$\times$};
\node[below] at (8,0) {$\scriptstyle 11$};
\node  at (9,0) {$\times$};
\node[below] at (9,0) {$\scriptstyle 14$};

\draw (2,0.2) arc [radius=1, start angle=120, end angle= 60];
\draw (4,0.2) arc [radius=1, start angle=120, end angle= 60];

\end{tikzpicture}\end{equation*}\end{example}
\begin{proof}[Proof of Theorem \ref{thm:tame equivalent totally connected} ``$\Leftarrow$'']
We give an algorithm to move from a totally connected weight $\lambda_{st}$
of a finite dimensional simple module to an admissible choice of simple
roots using a sequence of odd reflections. The main idea is to push
all the typical entries which are below the arcs to the side, making
all the atypical entries the same. This will allow us to choose an
arc arrangement with only short arcs.

We have from Remark \ref{rem: arc diagram entries} that adjacent
$\bullet$-entries of $\lambda_{\pi}^{\rho}$ are strictly decreasing,
while adjacent $\times$-entries of $\lambda_{\pi}^{\rho}$ are strictly
increasing. Hence, in the standard arc diagram all equalities between
entries correspond to arcs, and the arcs are nested in each other.
Moreover, all of the entries below the innermost arc are typical and
are different from the rest of the entries. Before applying the algorithm,
we move these entries outside of the arcs by pushing the $\times$'s
to the left and the $\bullet$'s to the right, which makes the innermost
arc short and of $\bullet-\times$ type. 

We begin the algorithm by reflecting along the innermost arc and then
we arrange the arcs to be of $\bullet-\times$ type. Due to the interval
property there are three possibilities: either there is an $a+1$
$\bullet$-entry on the left, an $a+1$ $\times$-entry on the right,
or both and in which case they must be connected by an arc. 

\begin{equation*}
\begin{aligned}
&\begin{tikzpicture} 
\node  at (-1,0) {\dots};
\node at (0,0) {$\bullet$};
\node [below] at (0,0) {$\scriptstyle{a+1}$};
\node at (1,0) {$\bullet$};
\node [below] at (1,0) {$\scriptstyle a$};
\node at (2,0) {$\times$};
\node [below] at (2,0) {$\scriptstyle a$};
\node  at (3,0) {\dots};
\draw (1,0.2) arc [radius=1, start angle=120, end angle= 60];
\end{tikzpicture}&\quad\longrightarrow\qquad &\begin{tikzpicture} 
\node at (-1,0) {\dots};
\node at (0,0) {$\bullet$};
\node [below] at (0,0) {$\scriptstyle{a+1}$};
\node at (1.5,0) {$\times$};
\node [below] at (1.5,0) {$\scriptstyle{a+1}$};
\node at (3,0) {$\bullet$};
\node [below] at (3,0) {$\scriptstyle{a+1}$};
\node at (4,0) {\dots};
\draw (0,0.2) arc [radius=1.5, start angle=120, end angle= 60];
\end{tikzpicture}\\
&\begin{tikzpicture} 
\node  at (-1,0) {\dots};
\node at (0,0) {$\bullet$};
\node [below] at (0,0) {$\scriptstyle a$};
\node at (,0) {$\times$};1
\node [below] at (1,0) {$\scriptstyle a$};
\node at (2,0) {$\times$};
\node [below] at (2,0) {$\scriptstyle{a+1}$};
\node  at (3,0) {\dots};
\draw (0,0.2) arc [radius=1, start angle=120, end angle= 60];
\end{tikzpicture} 
&\quad\longrightarrow\qquad 
&\begin{tikzpicture} 
\node at (-1,0) {\dots};
\node at (0,0) {$\times$};
\node [below] at (0,0) {$\scriptstyle{a+1}$};
\node at (1.5,0) {$\bullet$};
\node [below] at (1.5,0) {$\scriptstyle{a+1}$};
\node at (3,0) {$\times$};
\node [below] at (3,0) {$\scriptstyle{a+1}$};
\node at (4,0) {\dots};
\draw (1.5,0.2) arc [radius=1.5, start angle=120, end angle= 60];
\end{tikzpicture}\\
&\begin{tikzpicture} 
\node  at (-0.5,0) {\dots};
\node at (0,0) {$\bullet$};
\node [below] at (0,0) {$\scriptstyle{a+1}$};
\node at (1,0) {$\bullet$};
\node [below] at (1,0) {$\scriptstyle a$};
\node at (2,0) {$\times$};
\node [below] at (2,0) {$\scriptstyle a$};
\node at (3,0) {$\times$};
\node [below] at (3,0) {$\scriptstyle{a+1}$};
\node  at (3.5,0) {\dots};
\draw (0,0.2) arc [radius=3, start angle=120, end angle= 60];
\draw (1,0.2) arc [radius=1, start angle=120, end angle= 60];
\end{tikzpicture}&\quad\longrightarrow\qquad &\begin{tikzpicture} 
\node  at (-.5,0) {\dots};
\node at (0,0) {$\bullet$};
\node [below] at (0,0) {$\scriptstyle{a+1}$};
\node at (1.5,0) {$\times$};
\node [below] at (1.5,0) {$\scriptstyle{a+1}$};
\node at (3,0) {$\bullet$};
\node [below] at (3,0) {$\scriptstyle{a+1}$};
\node at (4.5,0) {$\times$};
\node [below] at (4.5,0) {$\scriptstyle{a+1}$};
\node  at (5,0) {\dots};
\draw (0,0.2) arc [radius=1.5, start angle=120, end angle= 60];
\draw (3,0.2) arc [radius=1.5, start angle=120, end angle= 60];
\end{tikzpicture}
\end{aligned}
\end{equation*} \\
After the reflection, in the first case we push the unmatched $a+1$
$\bullet$-entry to the right outside of the arcs, and in the second
case we push the unmatched $a+1$ $\times$-entry to the left outside
of the arcs.

This will be the base of our induction. Suppose that after $k$ steps,
all the atypical entries below $a+k+1$ are now equal to $a+k$ and
are paired by short arcs, and all other entries which are below an
arc remained as in the original diagram of $\lambda_{st}^{\rho}$.
Due to the interval property there are now three possibilities, namely,
either there is an $a+k+1$ $\bullet$-entry on the left, or an $a+k+1$
$\times$-entry on the right, or both and in which case they must
be connected by an arc. 

\begin{eqnarray*}
&\begin{tikzpicture} 
\node [below] at (-1.5,0) {\dots};
\node at (0,0) {$\bullet$};
\node [below] at (0,0) {$\scriptstyle{a+k+1}$};
\node at (2,0) {$\bullet$};
\node [below] at (2,0) {$\scriptstyle{a+k}$};
\node at (3.5,0) {$\times$};
\node [below] at (3.5,0) {$\scriptstyle{a+k}$};
\node [below] at (4.5,0) {\dots};
\node at (5.5,0) {$\bullet$};
\node [below] at (5.5,0) {$\scriptstyle{a+k}$};
\node at (7,0) {$\times$};
\node [below] at (7,0) {$\scriptstyle{a+k}$};
\node [below] at (8.5,0) {\dots};
\draw (2,0.2) arc [radius=1.5, start angle=120, end angle= 60];
\draw (5.5,0.2) arc [radius=1.5, start angle=120, end angle= 60];
\end{tikzpicture}\\
&\begin{tikzpicture} 
\node [below] at (0,0) {\dots};
\node at (1.5,0) {$\bullet$};
\node [below] at (1.5,0) {$\scriptstyle{a+k}$};
\node at (3,0) {$\times$};
\node [below] at (3,0) {$\scriptstyle{a+k}$};
\node [below] at (4,0) {\dots};
\node at (5,0) {$\bullet$};
\node [below] at (5,0) {$\scriptstyle{a+k}$};
\node at (6.5,0) {$\times$};
\node [below] at (6.5,0) {$\scriptstyle{a+k}$};
\node at (8.5,0) {$\times$};
\node [below] at (8.5,0) {$\scriptstyle{a+k+1}$};
\node[below] at (10,0){$\dots$};
\draw (1.5,0.2) arc [radius=1.5, start angle=120, end angle= 60];
\draw (5,0.2) arc [radius=1.5, start angle=120, end angle= 60];
\end{tikzpicture}\\
&\begin{tikzpicture} 
\node [below] at (-2,0) {\dots};
\node at (-.5,0) {$\bullet$};
\node [below] at (-.5,0) {$\scriptstyle{a+k+1}$};
\node at (1.5,0) {$\bullet$};
\node [below] at (1.5,0) {$\scriptstyle{a+k}$};
\node at (3,0) {$\times$};
\node [below] at (3,0) {$\scriptstyle{a+k}$};
\node [below] at (4,0) {\dots};
\node at (5,0) {$\bullet$};
\node [below] at (5,0) {$\scriptstyle{a+k}$};
\node at (6.5,0) {$\times$};
\node [below] at (6.5,0) {$\scriptstyle{a+k}$};
\node at (8.5,0) {$\times$};
\node [below] at (8.5,0) {$\scriptstyle{a+k+1}$};
\node[below] at (10,0){$\dots$};
\draw (1.5,0.2) arc [radius=1.5, start angle=120, end angle= 60];
\draw (5,0.2) arc [radius=1.5, start angle=120, end angle= 60];
\draw (-0.5,0.2) arc [radius=26, start angle=100, end angle= 80];
\end{tikzpicture}
\end{eqnarray*}  \\
We reflect along the short arcs and arrange the arcs to get the following
three diagrams, respectively.\\
\begin{eqnarray*}
&\begin{tikzpicture} 
\node [below] at (-1.5,0) {\dots};
\node at (0,0) {$\bullet$};
\node [below] at (0,0) {$\scriptstyle{a+k+1}$};
\node at (1.5,0) {$\times$};
\node [below] at (1.5,0) {$\scriptstyle{a+k+1}$};
\node [below] at (3,0) {\dots};
\node at (4.5,0) {$\bullet$};
\node [below] at (4.5,0) {$\scriptstyle{a+k+1}$};
\node at (6,0) {$\times$};
\node [below] at (6,0) {$\scriptstyle{a+k+1}$};
\node at (7.5,0) {$\bullet$};
\node [below] at (7.5,0) {$\scriptstyle{a+k+1}$};
\node [below] at (9,0) {\dots};
\draw (0,0.2) arc [radius=1.5, start angle=120, end angle= 60];
\draw (4.5,0.2) arc [radius=1.5, start angle=120, end angle= 60];
\end{tikzpicture}\\
&\begin{tikzpicture} 
\node [below] at (0,0) {\dots};
\node at (1.5,0) {$\times$};
\node [below] at (1.5,0) {$\scriptstyle{a+k+1}$};
\node at (3,0) {$\bullet$};
\node [below] at (3,0) {$\scriptstyle{a+k+1}$};
\node at (4.5,0) {$\times$};
\node [below] at (4.5,0) {$\scriptstyle{a+k+1}$};
\node [below] at (6,0) {\dots};
\node at (7.5,0) {$\bullet$};
\node [below] at (7.5,0) {$\scriptstyle{a+k+1}$};
\node at (9,0) {$\times$};
\node [below] at (9,0) {$\scriptstyle{a+k+1}$};
\node[below] at (10.5,0){$\dots$};
\draw (3,0.2) arc [radius=1.5, start angle=120, end angle= 60];
\draw (7.5,0.2) arc [radius=1.5, start angle=120, end angle= 60];

\end{tikzpicture}\\
&\begin{tikzpicture} 
\node  at (-2,0) {\dots};
\node at (-0.5,0) {$\bullet$};
\node [below] at (-.5,0) {$\scriptstyle{a+k+1}$};
\node at (1,0) {$\times$};
\node [below] at (1,0) {$\scriptstyle{a+k+1}$};
\node  at (2.5,0) {\dots};
\node at (4,0) {$\bullet$};
\node [below] at (4,0) {$\scriptstyle{a+k+1}$};
\node at (5.5,0) {$\times$};
\node [below] at (5.5,0) {$\scriptstyle{a+k+1}$};
\node at (7,0){$\dots$};
\draw (4,0.2) arc [radius=1.5, start angle=120, end angle= 60];
\draw (-0.5,0.2) arc [radius=1.5, start angle=120, end angle= 60];
\end{tikzpicture}
\end{eqnarray*}In the first case we push the unmatched $a+k+1$ $\bullet$-entry
to the right outside of the arcs, and in the second case we push the
unmatched $a+k+1$ $\times$-entry to the left outside of the arcs. 

We continue this procedure until we reach the outermost arc. After
doing the last step, all of the arcs become short and we get an admissible
choice of simple roots. Moreover, all the atypical entries are now
adjacent and equal to the largest atypical entry of $\lambda_{st}^{\rho}$.
The typical $\bullet$-entries (resp. $\times$-entries) which were
under an arc of $\lambda_{st}^{\rho}$ were pushed to the right (resp.
left). \end{proof}
\begin{rem}
\label{Rem: description of special tame} For each $\lambda_{st}^{\rho}$
which corresponds to a totally connected $\lambda$ in $\pi_{st}$,
one can immediately determine the arc diagram given by shortening
algorithm. For example, if $\lambda_{st}^{\rho}$ corresponds to the
arc diagram\\
\begin{equation*}\begin{tikzpicture} 
\node [below] at (0,0) {$\scriptstyle 16$};
\node at (0,0) {$\bullet$};
\node at (1,0) {$\bullet$};
\node [below] at (1,0) {$\scriptstyle 15$};
\node  at (2,0) {$\bullet$};
\node [below] at (2,0) {$\scriptstyle 12$};
\node  at (3,0) {$\bullet$};
\node [below] at (3,0) {$\scriptstyle 10$};
\node at (4,0) {$\bullet$};
\node [below] at (4,0) {$\scriptstyle 7$};
\node  at (5,0) {$\bullet$};
\node [below] at (5,0) {$\scriptstyle 5$};
\node  at (6,0) {$\bullet$};
\node [below] at (6,0) {$\scriptstyle 1$};
\node  at (7,0) {$\times$};
\node [below] at (7,0) {$\scriptstyle 4$};
\node  at (8,0) {$\times$};
\node [below] at (8,0) {$\scriptstyle 7$};
\node  at (9,0) {$\times$};
\node [below] at (9,0) {$\scriptstyle 8$};
\node  at (10,0) {$\times$};
\node [below] at (10,0) {$\scriptstyle 9$};
\node  at (11,0) {$\times$};
\node [below] at (11,0) {$\scriptstyle 10$};
\node  at (12,0) {$\times$};
\node [below] at (12,0) {$\scriptstyle 34$};

\draw (3,0.2) arc [radius=15.5, start angle=105, end angle= 75];
\draw (4,0.2) arc [radius=11.25, start angle=100, end angle= 80];

\end{tikzpicture}\end{equation*}then the shortening algorithm gives \begin{equation*}\begin{tikzpicture} 
\node  at (0,0) {$\bullet$};
\node [below] at (0,0) {$\scriptstyle 16$};
\node  at (1,0) {$\bullet$};
\node [below] at (1,0) {$\scriptstyle 15$};
\node  at (2,0) {$\bullet$};
\node [below] at (2,0) {$\scriptstyle 12$};
\node at (3,0) {$\times$};
\node [below] at (3,0) {$\scriptstyle 4$};
\node at (4,0) {$\times$};
\node [below] at (4,0) {$\scriptstyle 8$};
\node at (5,0) {$\times$};
\node [below] at (5,0) {$\scriptstyle 9$};
\node  at (6,0) {$\bullet$};
\node [below] at (6,0) {$\scriptstyle 10$};
\node at (7,0) {$\times$};
\node [below] at (7,0) {$\scriptstyle 10$};
\node  at (8,0) {$\bullet$};
\node [below] at (8,0) {$\scriptstyle 10$};
\node at (9,0) {$\times$};
\node [below] at (9,0) {$\scriptstyle 10$};
\node  at (10,0) {$\bullet$};
\node [below] at (10,0) {$\scriptstyle 5$};
\node  at (11,0) {$\bullet$};
\node [below] at (11,0) {$\scriptstyle 1$};
\node at (12,0) {$\times$};
\node [below] at (12,0) {$\scriptstyle 34$};

\draw (6,0.2) arc [radius=1, start angle=120, end angle= 60];
\draw (8,0.2) arc [radius=1, start angle=120, end angle= 60];

\end{tikzpicture}\end{equation*} where the typical $\bullet$-entries under an arc of $\lambda_{st}^{\rho}$
are pushed to the right outside the arcs, the typical $\times$-entries
under an arc of $\lambda_{st}^{\rho}$ are pushed to left outside
the arcs, the atypical entries are set equal to the maximal atypical
entry of $\lambda_{st}^{\rho}$ and then all arcs are chosen to be
short. \end{rem}
\begin{defn}
\label{def: special tame}We call an arc diagram for $L$ the \emph{special
arc diagram} if \begin{enumerate}
\item all arcs are short, of $\bullet-\times$  type and are adjacent;
\item all atypical entries are equal;
\item the typical nodes at each end of the diagram are organized so that the $\bullet$'s precede the $\times$'s;
\item the $\bullet$-entries are strictly decreasing left to right, except for atypical entries which are all equal;
\item the $\times$-entries are strictly increasing left to right, except for atypical entries which are all equal.
\end{enumerate}\end{defn}
\begin{rem}
The arc diagram obtained in the last step of the shortening algorithm
is a special arc diagram for $L$, since it satisfies (1)-(5). Hence,
every KW-module has a special set of simple roots, since the highest
weight of a  KW-module with respect to $\pi_{st}$ is totally connected.
Moreover, it is unique since we can apply the reverse of the shortening
algorithm to a special arc diagram to obtain the standard arc diagram
for a totally connected weight $\lambda$ of a finite dimensional
module.
\end{rem}

\section{Su-Zhang character formula for the totally connected case}

We use Brundan's algorithm to characterize KW-modules in terms of
Kazhdan-Lusztig polynomials and to prove the Su-Zhang character formula
for finite dimensional simple modules with a totally connected highest
weight in the standard choice of simple roots $\pi_{st}$. Recall
the notation $\left(\lambda_{st}^{\rho}\right)^{\Uparrow}$ and \textrm{$\left|\nu\right|_{S_{\lambda}}$}
from Section \ref{sub:Atypical-modules.}.
\begin{thm}[{Su, Zhang, \cite[4.13]{SZ1}}]
\label{thm:SZ arrow}Let $\lambda_{st}$ be a totally connected weight
with a $\lambda^{\rho}$-maximal isotropic set $S_{\lambda}$ such
that $\left|S_{\lambda}\right|=r$. Then 
\begin{equation}
e^{\rho}R\cdot\mbox{ch }L_{\pi_{st}}\left(\lambda_{st}\right)=\frac{\left(-1\right)^{\left|\left(\lambda_{st}^{\rho}\right)^{\Uparrow}-\lambda_{st}^{\rho}\right|_{S_{\lambda}}}}{r!}\mathcal{F}_{W}\left(\frac{e^{\left(\lambda_{st}^{\rho}\right)^{\Uparrow}}}{\prod_{\beta\in S_{\lambda}}\left(1+e^{-\beta}\right)}\right).\label{eq: SZ arrow formula}
\end{equation}

\end{thm}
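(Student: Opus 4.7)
The plan is to combine Brundan's algorithm for computing Kazhdan--Lusztig polynomials with the Kac character formula (\ref{eq:Kac character}), and then to recognize the resulting alternating sum as the expansion of the right-hand side of (\ref{eq: SZ arrow formula}). By Serganova's result, in the Grothendieck group one has
\[
[L_{\pi_{st}}(\lambda_{st})] = \sum_{\mu} a_{\lambda,\mu}\,[\overline{L}(\mu)],
\]
where $a_{\lambda,\mu}$ is an appropriately-signed Kazhdan--Lusztig multiplicity. The first step is to use Brundan's algorithm together with the assumption that $\lambda_{st}$ is totally connected (so that every nonzero KL polynomial is a monomial with coefficient $1$, as recalled in the introduction) to obtain an explicit description of the set of $\mu$ that appear and of the signs $a_{\lambda,\mu}$. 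The key point is that in terms of weight diagrams the allowed $\mu$'s are obtained by independently sliding each of the $r$ atypical $\times$-symbols of $\lambda$ leftward by a non-negative amount, staying within the region dictated by the interval property, so that the collection $\{\mu^\rho\}$ is naturally indexed by tuples $k=(k_1,\dots,k_r)\in\mathbb{N}^{r}$ with $\mu^\rho=(\lambda_{st}^\rho)^\Uparrow-\sum_i(k_i+c_i)\beta_i$ for certain fixed shifts $c_i$ whose total is $|(\lambda_{st}^\rho)^\Uparrow-\lambda_{st}^\rho|_{S_\lambda}$.

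Next I would substitute the Kac character formula
\[
\text{ch }\overline{L}(\mu)=\frac{1}{e^{\rho}R}\,\mathcal{F}_{W}(e^{\mu^\rho})
\]
into the sum, giving
\[
e^{\rho}R\cdot\text{ch }L_{\pi_{st}}(\lambda_{st}) = \sum_{k\in\mathbb{N}^{r}}(-1)^{|k|+|(\lambda_{st}^\rho)^\Uparrow-\lambda_{st}^\rho|_{S_\lambda}}\,\mathcal{F}_{W}\!\left(e^{(\lambda_{st}^\rho)^\Uparrow-\sum_i k_i\beta_i}\right).
\]
On the other hand, expanding the geometric series gives
\[
\frac{e^{(\lambda_{st}^\rho)^\Uparrow}}{\prod_{\beta\in S_\lambda}(1+e^{-\beta})}=\sum_{k\in\mathbb{N}^{r}}(-1)^{|k|}e^{(\lambda_{st}^\rho)^\Uparrow-\sum_i k_i\beta_i},
\]
so after applying $\mathcal{F}_W$ term by term the two expressions agree up to the factor $1/r!$ and the overall sign $(-1)^{|(\lambda_{st}^\rho)^\Uparrow-\lambda_{st}^\rho|_{S_\lambda}}$.

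The final step is to account for the factor $1/r!$. The map $\mathbb{N}^{r}\to\mathfrak{h}^{*}$ sending $k\mapsto (\lambda_{st}^\rho)^\Uparrow-\sum_i k_i\beta_i$ is $r!$-to-$1$ after passing to $W$-orbits, because permuting the coordinates of $k$ corresponds to permuting the atypical $\varepsilon$-indices (resp.\ $\delta$-indices) of the resulting weight, which is realized by an element of $W=Sym_m\times Sym_n$. By Lemma \ref{lem:reflection yields F=00003D0}, any $k$ for which the shifted weight has a nontrivial $W$-stabilizer contributes zero to $\mathcal{F}_W$, and the remaining $k$'s fall into free $S_r$-orbits, so collecting the $r!$ copies per orbit reproduces the factor $1/r!$ on the right-hand side.

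The main obstacle is the combinatorial step of matching the multiplicities from Brundan's algorithm with the coefficients in the expansion of $1/\prod(1+e^{-\beta})$: one must verify, for totally connected $\lambda_{st}$, that the set of $\mu$ with $a_{\lambda,\mu}\neq 0$ is precisely the free $S_r$-quotient of $\mathbb{N}^r$ under the above indexing, and that the signs line up to give $(-1)^{|k|+|(\lambda_{st}^\rho)^\Uparrow-\lambda_{st}^\rho|_{S_\lambda}}$. This requires a careful inspection of Brundan's algorithm on weight diagrams with no empty spots between the $\times$'s, and it is where the totally connected hypothesis is used in an essential way.
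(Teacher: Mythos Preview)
Your overall architecture is right and matches the paper's: start from Brundan's monomial Kazhdan--Lusztig polynomials, insert the Kac character formula, and match the alternating sum with the geometric-series expansion of the right-hand side, using the $S_r$-symmetry to produce the $1/r!$. But two of your intermediate claims are false as written, and each hides a step the paper carries out explicitly.

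First, the weights $\mu^{\rho}$ for $\mu\in P_{\lambda}$ are \emph{not} of the form $(\lambda_{st}^{\rho})^{\Uparrow}-\sum_i(k_i+c_i)\beta_i$. When an atypical $\times$ of $\lambda$ slides leftward past a $>$ or a $<$, the indices $i,j$ carrying the atypical value in $\mu^{\rho}$ change, so $\mu^{\rho}$ only lies in the $W$-orbit of such an element. The paper introduces $\overline{\mu}$, the unique maximal element of $W\mu^{\rho}\cap(\lambda^{\rho}-\mathbb{N}S_{\lambda})$, and then proves (Lemma~\ref{lem:Lexi sum}) that $(-1)^{l_{\lambda,\mu}}\mathcal{F}_{W}(e^{\mu^{\rho}})=(-1)^{|\lambda^{\rho}-\overline{\mu}|_{S_{\lambda}}}\mathcal{F}_{W}(e^{\overline{\mu}})$ by checking that $|\lambda^{\rho}-\overline{\mu}|_{S_{\lambda}}=l_{\lambda,\mu}+l(w)$, where $l(w)$ counts the typical spots skipped by the path. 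Your displayed formula for $e^{\rho}R\cdot\mathrm{ch}\,L$ presupposes this identity without justifying it.

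Second, even after passing to $\overline{\mu}$, the index set is \emph{not} the free $S_r$-quotient of $\mathbb{N}^{r}$. Writing $\overline{\mu}=(\lambda^{\rho})^{\Uparrow}-\sum_i m_i\beta_i$, the paper shows
\[
C_{\lambda}^{\mathrm{Lexi}}=\Bigl\{(\lambda^{\rho})^{\Uparrow}-\textstyle\sum_i m_i\beta_i\ :\ m_i\in\mathbb{Z}_{\ge k_i},\ m_1>m_2>\cdots>m_r\Bigr\},
\]
so there is an additional lower bound $m_i\ge k_i$ (coming from the fact that each $\times$ can only move left, not right, from its position in $\lambda$). The geometric-series expansion, by contrast, runs over all $m_i\ge 0$. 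The paper closes this gap by proving that every extra term with some $m_j<k_j$ satisfies $\mathcal{F}_{W}(e^{\nu})=0$, because such a $\nu$ necessarily has two equal entries of the same type; this pigeonhole argument is exactly where the totally connected hypothesis (no empty spots between the $\times$'s) is used. You locate the use of total connectedness in the monomiality of the KL polynomials and in the $S_r$-orbit step, but that is not where the essential cancellation happens.
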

To prove this theorem, we extend the ring $\mathcal{E}$ by adding
expansions of the elements $\frac{1}{1+e^{-\beta}}$ with \textrm{$\beta\in\Delta_{\bar{1}}^{+}$
}with respect to $\pi_{st}$ as geometric series in the domain $\left|e^{-\beta}\right|<1$.
Since $\Delta_{\bar{1}}^{+}$ is fixed by $W$, expanding commutes
with the action of $W$.

\subsection{Brundan's algorithm}

In \cite{S1}, Serganova introduced the generalized Kazhdan-Lusztig
polynomials to give a character formula for finite dimensional irreducible
representations of $\mathfrak{gl}\left(m|n\right)$. For each $\lambda$
and $\mu$ dominant integral, the Kazhdan-Lusztig polynomial $K_{\lambda,\mu}\left(q\right)$
was shown to yield the multiplicity of Kac module $\overline{L}\left(\mu\right)$
inside the simple module $L_{st}\left(\lambda\right)$ in the following
sense:
\begin{equation}
\mbox{ch }L_{\pi_{st}}\left(\lambda\right)=\sum_{\mu\in\mathfrak{h}^{*}}K_{\lambda,\mu}\left(-1\right)\mbox{ch }\overline{L}\left(\mu\right).\label{eq:composition factors of Kac modules}
\end{equation}
In this section we recall the algorithm of Brundan \cite{B} to compute
$K_{\lambda,\mu}\left(q\right)$ in terms of weight diagrams. 

We define a\emph{ right move} map from the set of (labeled) weight
diagrams to itself in two steps. Let $D_{\mu}$ be a weight diagram
for $\mu\in\mathbb{P}^{+}$, and choose a labeling of the $\times$'s
with indexing set $\{1,\ldots,r\}$. Then for each $\times$, starting
with the rightmost $\times$, ``mark'' the next empty spot to the
right of it (which is unmarked). The right move $R_{i}$ is then defined
by moving $\times_{i}$ to the empty spot it marked.
\begin{example}
\label{exp: lambda diagram}Let $D_{\mu}$ be 
\begin{equation}
\ldots\stackrel{}{-1}\quad\stackrel{}{0}\quad\stackrel{}{1}\quad\stackrel{<}{2}\quad\stackrel{}{3}\quad\stackrel{\times}{4}\quad\stackrel{>}{5}\quad\stackrel{\times}{6}\quad\stackrel{}{7}\quad\stackrel{\times}{8}\quad\stackrel{>}{9}\quad\stackrel{>}{10}\quad\stackrel{}{11}\quad\stackrel{}{12}\ldots.\label{eq:example of a non totally connnected diagram}
\end{equation}
The rightmost $\times$ is at $8$ and we mark $11$ for it. The next
$\times$ is at $6$ and so we mark $7.$ Finally for the leftmost
$\times$ we mark $12$. Then
\begin{eqnarray*}
R_{1}\left(D_{\mu}\right) & = & \ldots\stackrel{}{-1}\quad\stackrel{}{0}\quad\stackrel{<}{1}\quad\stackrel{}{2}\quad\stackrel{}{3}\quad\stackrel{}{4}\quad\stackrel{>}{5}\quad\stackrel{\times}{6}\quad\stackrel{}{7}\quad\stackrel{\times}{8}\quad\stackrel{>}{9}\quad\stackrel{>}{10}\quad\stackrel{}{11}\quad\stackrel{\times}{12}\ldots\\
R_{2}\left(D_{\mu}\right) & = & \ldots\stackrel{}{-1}\quad\stackrel{}{0}\quad\stackrel{}{1}\quad\stackrel{<}{2}\quad\stackrel{}{3}\quad\stackrel{\times}{4}\quad\stackrel{>}{5}\quad\stackrel{}{6}\quad\stackrel{\times}{7}\quad\stackrel{\times}{8}\quad\stackrel{>}{9}\quad\stackrel{>}{10}\quad\stackrel{}{11}\quad\stackrel{}{12}\ldots\\
R_{3}\left(D_{\mu}\right) & = & \ldots\stackrel{}{-1}\quad\stackrel{}{0}\quad\stackrel{<}{1}\quad\stackrel{}{2}\quad\stackrel{}{3}\quad\stackrel{\times}{4}\quad\stackrel{>}{5}\quad\stackrel{\times}{6}\quad\stackrel{}{7}\quad\stackrel{}{8}\quad\stackrel{>}{9}\quad\stackrel{>}{10}\quad\stackrel{\times}{11}\quad\stackrel{}{12}\ldots..
\end{eqnarray*}

\end{example}
Note that the weight $\mu_{i}^{\rho}$ corresponding to $R_{i}\left(D_{\mu}\right)$
does not only differ from $\mu^{\rho}$ by atypical roots. It also
has a different atypical set. In the previous example
\begin{eqnarray*}
S_{\mu} & = & \left\{ \varepsilon_{3}-\delta_{2},\varepsilon_{4}-\delta_{3},\varepsilon_{6}-\delta_{4}\right\} \\
S_{\mu_{1}} & = & \left\{ \varepsilon_{1}-\delta_{2},\varepsilon_{4}-\delta_{3},\varepsilon_{5}-\delta_{4}\right\} .
\end{eqnarray*}
 
\begin{defn}
Let $\mu,\lambda\in\mathbb{P}^{+}$, and label the $\times$'s in
the diagram $D_{\mu}$ from left to right with $1,\ldots,r$. A \emph{right
path} from $D_{\mu}$ to $D_{\lambda}$ is a sequence of right moves
$\theta=R_{i_{1}}\circ\dots\circ R_{i_{k}}$ where $i_{1}\le\ldots\le i_{k}$
and $\theta(D_{\mu})=D_{\lambda}$. The length of the path is $l\left(\theta\right):=k$.\end{defn}
\begin{example}
Let $D_{\mu}$ be
\[
\ldots\stackrel{}{-1}\quad\stackrel{}{0}\quad\stackrel{\times}{1}\quad\stackrel{<}{2}\quad\stackrel{}{3}\quad\stackrel{\boxed{\times}}{4}\quad\stackrel{>}{5}\quad\stackrel{\boxed{\times}}{6}\quad\stackrel{}{7}\quad\stackrel{\boxed{{\color{white}\square}}}{8}\quad\stackrel{>}{9}\quad\stackrel{>}{10}\quad\stackrel{}{11}\quad\stackrel{}{12}\ldots
\]
and $D_{\lambda}$ be as in Example \ref{exp: lambda diagram}. The
boxes in the diagram represent the locations of the $\times$'s in
$\lambda$. Then there are two paths from $D_{\mu}$ to $D_{\lambda}$,
namely 
\begin{eqnarray*}
D_{\lambda} & = & R_{1}\circ R_{1}\circ R_{2}\circ R_{3}\circ R_{3}\left(D_{\mu}\right)\\
D_{\lambda} & = & R_{1}\circ R_{1}\circ R_{2}\left(D_{\mu}\right).
\end{eqnarray*}
The first path sends the $i$-th $\times$ of $\mu$ to the $i$-th
box whereas the second path permutes the order. Not all such permutations
are valid, for example the third $\times$ in $\mu$ can not be moved
to the left. Also suppose the second and third $\times$'s will remain
in the first and second boxes, respectively. Then the first $\times$
will never reach the box at $8$ since it is marked by the $\times$
at $4$, and hence such a path would be invalid. 
\end{example}
If there exist paths from $D_{\mu}$ to $D_{\lambda}$, then one of
them sends the $i$-th $\times$ of $\mu^{\rho}$ to the location
of the $i$-th $\times$ of $\lambda^{\rho}$. This path is unique
because the $\times$'s are moved in order. We call it the \emph{trivial
path }from $D_{\mu}$ to $D_{\lambda}$ and denote its length by $l_{\lambda,\mu}$
(this was denoted as $l\left(\lambda,\mu\right)$ in \cite[(3.15)]{SZ1}).

The trivial path is strictly longer than the rest of the paths. Indeed,
in other paths, there is at least one $\times$ which is not moved
as far as possible. This implies that another $\times$ will jump
over it, making the move longer. So in this case one needs less moves
to fill all of the boxes. 

By \cite[Corollary 3.39]{B}, we have 
\[
K_{\lambda,\mu}\left(q\right)=\sum_{\theta\in P}q^{l\left(\theta\right)}
\]
where $P$ is the set of paths from $D_{\mu}$ to $D_{\lambda}$.
In the previous example $K_{\lambda,\mu}=q^{5}+q^{3}$.

\subsection{Kazhdan-Lusztig polynomials and the Su-Zhang character formula}

First we show that $\lambda$ being totally connected is equivalent
to all paths to $D_{\lambda}$ being trivial. This yields a new characterization
of KW-modules in terms of the Kazhdan-Lusztig polynomials. Then we
use Brundan's algorithm to give a closed formula for $e^{\rho}R\cdot\mbox{ch }L_{\pi_{st}}\left(\lambda\right)$,
which was originally proven in \cite[4.13]{SZ1}.
\begin{lem}
Let $\lambda\in\mathbb{P}^{+}$. Then $\lambda$ is totally connected
if and only if for every $\mu\in\mathbb{P}^{+}$, there is at most
one path from $D_{\mu}$ to $D_{\lambda}$.\end{lem}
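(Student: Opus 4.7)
The plan is to analyze paths from $D_\mu$ to $D_\lambda$ structurally. A path $\theta=R_{i_1}\circ\cdots\circ R_{i_k}$ with $i_1\le\cdots\le i_k$ applies $R_{i_k}$ first, so it processes the $\times$'s of $\mu$ in decreasing label order: all $R_r$'s first, then all $R_{r-1}$'s, and so on. Each individual $R_j$ moves $\times_j$ deterministically to its marked empty spot, so the whole path is determined by the permutation $\sigma$ of $\{1,\dots,r\}$ recording which $\times$-position $p_{\sigma(j)}$ of $\lambda$ each $\times_j$ of $\mu$ ends up at (here $p_1<\cdots<p_r$ are the $\times$-positions of $D_\lambda$). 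The trivial path corresponds to $\sigma=\mathrm{id}$, so the lemma reduces to showing that $\lambda$ is totally connected if and only if $\sigma=\mathrm{id}$ is the unique feasible permutation for every $\mu\in\mathbb{P}^+$.

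For the ``$\Leftarrow$'' direction, I would assume $\lambda$ is totally connected and derive a contradiction from a nontrivial feasible $\sigma$. Choose the largest $j$ with $\sigma(j)\ne j$; then $\sigma(j)<j$ and some $k<j$ has $\sigma(k)=j$. In the path, $\times_j$ ends at $p_{\sigma(j)}$ before $\times_k$ is processed, and later $\times_k$ must jump past $\times_j$ to land at $p_j$. Analyzing the marking rule at the moment $R_k$ is applied, where marks are assigned right-to-left, one sees that in order for $p_j$ to become the mark of $\times_k$ the $\times$'s sitting in the interval $(p_{\sigma(j)},p_j)$ of the current state must together claim all empty spots in that interval plus at least one further empty spot to the right, which forces at least one empty spot in the interval. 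Tracking the symbols through the remaining right moves, this empty spot persists as an empty spot in $\lambda$'s final diagram between two consecutive $\times$-positions, contradicting total connectedness.

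For the ``$\Rightarrow$'' direction, I would assume $\lambda$ has an empty spot $e$ between two adjacent $\times$-positions $p_i<p_{i+1}$ and construct an explicit $\mu\in\mathbb{P}^+$ with two distinct paths to $\lambda$. Take $\mu$ whose $\times$-positions are $(q,p_1,\dots,p_{r-1})$ for some $q<p_1$, and arrange the $>$'s and $<$'s of $\mu$ (by a suitable choice of the extra $a_i$'s and $b_j$'s) to coincide with those of $\lambda$ outside this range and in particular to keep $e$ empty in $\mu$. The trivial path sends each $\mu_j=p_{j-1}\mapsto p_j$ for $j\geq 2$ and $\mu_1=q\mapsto p_1$. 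A second, nontrivial path applies $R_{i+1}$ zero times, so that $\mu_{i+1}=p_i$ remains in place; when $R_i$ is subsequently applied, $\mu_{i+1}$ at $p_i$ claims the mark $e$, leaving $p_{i+1}$ as the next available mark for $\mu_i$ at $p_{i-1}$, and $\mu_i$ jumps over $\mu_{i+1}$ to $p_{i+1}$, giving two distinct paths.

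The main obstacle is the bookkeeping in the ``$\Leftarrow$'' direction. It is intuitively clear that a nontrivial path requires some empty spot to act as a stepping stone for the marking rule, but pinpointing this empty spot as one lying between two adjacent $\times$'s of the \emph{final} diagram of $\lambda$ (and showing it is not absorbed or rearranged during the intervening right moves, since right moves shift $>$'s and $<$'s in potentially subtle ways) is the technical core of the argument. The ``$\Rightarrow$'' direction, by contrast, is essentially an explicit construction together with a direct computation of the two candidate paths.
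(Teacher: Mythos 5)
Your overall structure is right: reduce paths to permutations $\sigma$, pick the largest $j$ with $\sigma(j)\neq j$, and analyze the marking rule. And your explicit construction of a $\mu$ with two paths for the converse direction is the right idea and matches the paper's own illustrative example (with $\times$-positions $1,4,6$ and boxes $4,6,8$).

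However, the concluding step of the direction ``totally connected implies at most one path'' does not work as you describe. Under the total-connectedness hypothesis, every position strictly between consecutive $\times$'s of $D_{\lambda}$ is a $>$ or a $<$; so the empty spot your marking analysis locates inside $(p_{\sigma(j)},p_j)$ of the current state is necessarily one of the as-yet-unfilled \emph{boxes} $p_{\sigma(j)+1},\dots,p_{j-1}$ of $D_{\lambda}$ (and when $\sigma(j)=j-1$ there is no such spot at all, since $\times_j$ then marks $p_j$ directly). Such a spot is itself a $\times$-position of $D_{\lambda}$, so it cannot ``persist as an empty spot in $\lambda$'s final diagram between two consecutive $\times$-positions, contradicting total connectedness.'' The contradiction the paper actually derives is simpler and different: $\times_j$ sitting at $p_{\sigma(j)}$ marks $p_{\sigma(j)+1}$ (the first empty spot to its right, everything in between being $>$ or $<$) and continues to claim it at every later stage, so that box can never be filled by any right move and the path cannot terminate at $D_{\lambda}$ --- a contradiction to the path's existence, not to total connectedness. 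You should also drop the worry about right moves ``shifting $>$'s and $<$'s in potentially subtle ways'': a right move only relocates a single $\times$ to an empty spot and leaves every $>$ and $<$ fixed, so there is nothing subtle to track. Finally, a minor point on your construction for the other direction: the move of $\mu_i$ from $p_{i-1}$ to $p_{i+1}$ is in general a sequence of several applications of $R_i$, not a single jump, if there are additional empty spots in $(p_{i-1},p_{i+1})$; this does not affect the conclusion.
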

\begin{proof}
We will refer to the locations of the $\times$'s of the weight diagram
$D_{\lambda}$ as boxes and the paths will send the $\times$'s in
$D_{\mu}$ to boxes.

The weight $\lambda$ is totally connected when there are no empty
spots between the boxes. This implies that a path from $D_{\mu}$
to $D_{\lambda}$ must send the $i$-th $\times$ of $\mu$ to the
$i$-th box. Indeed, if the $i$-th $\times$ of $\mu$ is sent to
the $j$-th box, $j<i$, then the next empty box to the right of the
$j$-th box will be marked by the $i$-th $\times$, and so no other
$\times$ can be sent there. This implies that the path must be unique. 

Suppose that there exists $\mu$ for which there is a non trivial
path to $D_{\lambda}$. In this path there is an $\times$ of $\mu$,
say the $i$-th, which is sent to the $j$-th box where $j<i$. Then
the next empty spot after this box, can not have a box in it. So $\lambda$
is not totally connected.\end{proof}
\begin{cor}
\label{cor:kl polys for tame}A module $L_{\pi_{st}}\left(\lambda\right)$
is  a KW-module if and only if all its Kazhdan-Lusztig polynomials
are monomials. In this case, $K_{\lambda,\mu}\left(q\right)=q^{l_{\lambda,\mu}}$.
\end{cor}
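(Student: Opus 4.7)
The plan is to obtain this corollary as an almost immediate consequence of three earlier ingredients: the preceding lemma (characterizing totally connected $\lambda$ by uniqueness of paths to $D_\lambda$), Brundan's formula $K_{\lambda,\mu}(q) = \sum_{\theta \in P} q^{l(\theta)}$, and Theorem \ref{thm:tame equivalent totally connected} (identifying KW-modules with totally connected highest weights in the standard choice).

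First I would observe that, by Brundan's formula, the polynomial $K_{\lambda,\mu}(q)$ is either zero or a monomial with coefficient $1$ precisely when the set of paths $P$ from $D_\mu$ to $D_\lambda$ has at most one element. Quantifying over $\mu \in \mathbb{P}^+$, this says: every $K_{\lambda,\mu}(q)$ is a monomial if and only if for every $\mu$ there is at most one path from $D_\mu$ to $D_\lambda$. By the preceding lemma, the latter condition is equivalent to $\lambda$ being totally connected, and by Theorem \ref{thm:tame equivalent totally connected} this is equivalent to $L_{\pi_{st}}(\lambda)$ being a KW-module, giving the desired biconditional.

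For the explicit identification $K_{\lambda,\mu}(q) = q^{l_{\lambda,\mu}}$ in the KW case, I would invoke the observation (made just after the definition of the trivial path) that whenever the set of paths $P$ is nonempty it contains the trivial path. If $\lambda$ is totally connected, then $|P| \le 1$, so whenever $P$ is nonempty it consists of exactly the trivial path. Its length is $l_{\lambda,\mu}$ by definition, so $K_{\lambda,\mu}(q) = q^{l_{\lambda,\mu}}$ in that case; when $P$ is empty both sides vanish if we interpret $q^{l_{\lambda,\mu}}$ as $0$ (i.e.\ restrict to $\mu$ with $K_{\lambda,\mu} \neq 0$, equivalently $D_\lambda$ reachable from $D_\mu$).

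I do not anticipate any genuine obstacle here, since the proof is purely a chaining together of previously proved facts. The only point requiring care is the interpretation of \emph{monomial}: one must allow the zero polynomial (or restrict $\mu$ to those with $K_{\lambda,\mu} \neq 0$), since for most $\mu$ there will simply be no right path from $D_\mu$ to $D_\lambda$ and the associated polynomial is zero.
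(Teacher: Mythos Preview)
Your proposal is correct and matches the paper's approach exactly: the corollary is stated without proof in the paper, as it follows immediately from the preceding lemma, Brundan's formula $K_{\lambda,\mu}(q)=\sum_{\theta\in P}q^{l(\theta)}$, and Theorem~\ref{thm:tame equivalent totally connected}, which is precisely the chain of deductions you give. Your remark about the convention for the zero polynomial is also apt.
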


From (\ref{eq:composition factors of Kac modules}) and (\ref{eq:Kac character})
we also obtain the following, which is a special case of \cite[Thm. 4.1]{SZ1}.
\begin{cor}
Suppose $\lambda$ is totally connected. Since the unique path from
$D_{\mu}$ to $D_{\lambda}$ is the trivial one, we get 
\[
e^{\rho}R\cdot\mbox{ch }L_{\pi_{st}}\left(\lambda\right)=\sum_{\mu\in P_{\lambda}}\left(-1\right)^{l_{\lambda,\mu}}\mathcal{F}_{W}\left(e^{\mu^{\rho}}\right)
\]
where $P_{\lambda}\subset\mathbb{P}^{+}$ is the set of $\mu\in\mathbb{P}^{+}$
for which there is a path from $D_{\mu}$ to $D_{\lambda}$. 
\end{cor}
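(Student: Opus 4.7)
The plan is to read off the corollary as a direct assembly of three ingredients already available in the paper: formula (\ref{eq:composition factors of Kac modules}) expressing $\mathrm{ch}\,L_{\pi_{st}}(\lambda)$ as a $\mathbb{Z}$-linear combination of Kac characters with coefficients $K_{\lambda,\mu}(-1)$; the Kac character identity (\ref{eq:Kac character}) in the form $e^{\rho}R\cdot\mathrm{ch}\,\overline{L}(\mu)=\mathcal{F}_{W}(e^{\mu^{\rho}})$; and Corollary \ref{cor:kl polys for tame}, which says that when $\lambda$ is totally connected the Kazhdan--Lusztig polynomial is the monomial $K_{\lambda,\mu}(q)=q^{l_{\lambda,\mu}}$ whenever a path from $D_{\mu}$ to $D_{\lambda}$ exists.

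First I would multiply both sides of (\ref{eq:composition factors of Kac modules}) through by $e^{\rho}R$, replacing each $e^{\rho}R\cdot\mathrm{ch}\,\overline{L}(\mu)$ by $\mathcal{F}_{W}(e^{\mu^{\rho}})$ using (\ref{eq:Kac character}). Next I would restrict the outer sum to $P_{\lambda}$: by Brundan's formula $K_{\lambda,\mu}(q)=\sum_{\theta\in P}q^{l(\theta)}$, the polynomial $K_{\lambda,\mu}$ is identically zero whenever no path from $D_{\mu}$ to $D_{\lambda}$ exists, so the other $\mu$'s contribute nothing. Finally, for $\mu\in P_{\lambda}$, the lemma above the corollary guarantees that the (unique) path is the trivial path of length $l_{\lambda,\mu}$, whence by Corollary \ref{cor:kl polys for tame} we have $K_{\lambda,\mu}(-1)=(-1)^{l_{\lambda,\mu}}$. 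Combining yields exactly the claimed identity.

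There is essentially no genuine obstacle; the whole content sits in Corollary \ref{cor:kl polys for tame} and the preceding lemma, which have already done the work of forcing $K_{\lambda,\mu}$ to be a monomial. The one small book-keeping point I would double-check is that $K_{\lambda,\mu}(-1)$ vanishes on precisely the complement of $P_{\lambda}$ in $\mathbb{P}^{+}$ (so that the collapse to $\sum_{\mu\in P_{\lambda}}$ is legitimate as an equality in $\mathcal{E}$), and that $\mu\in P_{\lambda}$ automatically lies in $\mathbb{P}^{+}$ so that applying the Kac character formula (\ref{eq:Kac character}) to $\overline{L}(\mu)$ is justified; both follow because right moves preserve membership in $\mathbb{P}^{+}$.
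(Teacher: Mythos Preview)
Your proposal is correct and matches the paper's approach exactly: the paper simply states that the corollary follows from (\ref{eq:composition factors of Kac modules}) and (\ref{eq:Kac character}), together with the monomial form of $K_{\lambda,\mu}$ established in Corollary~\ref{cor:kl polys for tame}. Your additional book-keeping remarks (vanishing of $K_{\lambda,\mu}$ outside $P_{\lambda}$ and $P_{\lambda}\subset\mathbb{P}^{+}$) are correct and merely make explicit what the paper leaves implicit.
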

Note that for each $\mu\in P_{\lambda}$, the $W$ orbit of $\mu^{\rho}$
intersects $\left(\lambda^{\rho}-\mathbb{N}S_{\lambda}\right)$. We
denote by $\overline{\mu}$ the unique maximal element of this intersection
with respect to the standard order on $\mathfrak{h}^{*}$. We define
\[
C_{\lambda}^{\mathrm{Lexi}}:=\left\{ \overline{\mu}\ \mid\ \mu\in P_{\lambda}\right\} .
\]
Since $P_{\lambda}\subset\mathbb{P}^{+}$, this defines a bijection
between the sets $P_{\lambda}$ and $C_{\lambda}^{\mathrm{Lexi}}$. 

For $\mu\in P_{\lambda}$, we can realize $\overline{\mu}$ more explicitly
as follows. If $\varepsilon_{i}$ and $\delta_{j}$ are typical nodes
of $\lambda^{\rho}$, then $\left(\overline{\mu},\varepsilon_{i}\right):=\left(\lambda^{\rho},\varepsilon_{i}\right)$
and $\left(\overline{\mu},\delta_{j}\right):=\left(\lambda^{\rho},\delta_{j}\right)$.
The location of the atypical entries for $\overline{\mu}$ is determined
by locations in $\lambda^{\rho}$. The set of atypical entries of
$\overline{\mu}$ correspond to the set of atypical entries of $\mu^{\rho}$,
ordered such that the $\varepsilon$-atypical entries are decreasing
and the $\delta$-atypical entries are increasing. 
\begin{example}
Consider 
\begin{eqnarray*}
\lambda^{\rho} & = & 10\varepsilon_{1}+9\varepsilon_{2}+\underline{8\varepsilon_{3}}+\underline{6\varepsilon_{4}}+5\varepsilon_{5}+\underline{4\varepsilon_{6}}-2\delta_{1}-\underline{4\delta_{2}}-\underline{6\delta_{3}}-\underline{8\delta_{4}}\\
\mu^{\rho} & = & 10\varepsilon_{1}+9\varepsilon_{2}+\underline{6\varepsilon_{3}}+5\varepsilon_{4}+\underline{4\varepsilon_{5}}+\underline{\varepsilon_{6}}-\underline{\delta_{1}}-2\delta_{2}-\underline{4\delta_{3}}-\underline{6\delta_{4}}\\
\overline{\mu} & = & 10\varepsilon_{1}+9\varepsilon_{2}+\underline{6\varepsilon_{3}}+\underline{4\varepsilon_{4}}+5\varepsilon_{5}+\underline{\varepsilon_{6}}-2\delta_{1}-\underline{\delta_{2}}-\underline{4\delta_{3}}-\underline{6\delta_{4}}.
\end{eqnarray*}
Here $S_{\lambda}=\left\{ \varepsilon_{6}-\delta_{2},\varepsilon_{4}-\delta_{3},\varepsilon_{3}-\delta_{4}\right\} $
and $\lambda^{\rho}=\overline{\mu}+3\left(\varepsilon_{6}-\delta_{2}\right)+2\left(\varepsilon_{4}-\delta_{3}\right)+2\left(\varepsilon_{3}-\delta_{4}\right)$.\end{example}
\begin{rem}
\label{Rem: the length of w s.t. w(mu)=00003Dbar{mu}}The element
$w\in W$ for which $w(\mu^{\rho})=\overline{\mu}$ can be described
explicitly in terms of the trivial path $\theta$. Denote $\theta=R_{i_{1}}\circ\dots\circ R_{i_{N}}$,
then $w=w_{1}\cdot\dots\cdot w_{N}$ where each $w_{j}$ is defined
as follows. Suppose that the move $R_{i_{j}}$ moved the $\times$
at $n_{j}$ to an empty spot at $n_{j}+k_{j}$, namely, it skipped
over $k_{j}-1$ atypical spots with $>$'s and $<$'s. Then $w_{j}=s_{1}\cdot\dots\cdot s_{k_{j}-1}$
where $s_{i}$ is of the form $s_{\varepsilon_{l}-\varepsilon_{l+1}}$
if the $i$-th skip is over the $>$ of \textrm{$\varepsilon_{l}$}
and is of the form $s_{\delta_{l}-\delta_{l+1}}$ if it is over the
$<$ of $\delta_{l}$. In particular, $l\left(w_{j}\right)=k_{j}-1$.
Moreover $l\left(w\right)=\sum l\left(w_{i}\right)$. 
\end{rem}
The following lemma is the main step of the proof in which we move
from an algorithmic formula to a closed one, and it is a special case
of \cite[Thm. 4.2]{SZ1}.
\begin{lem}
One has\label{lem:Lexi sum} 
\[
e^{\rho}R\cdot\mbox{ch }L_{\pi_{st}}\left(\lambda\right)=\sum_{\overline{\mu}\ \in\ C_{\lambda}^{\mathrm{Lexi}}}\left(-1\right)^{\left|\lambda^{\rho}-\overline{\mu}\right|_{S_{\lambda}}}\mathcal{F}_{W}\left(e^{\overline{\mu}}\right).
\]
\end{lem}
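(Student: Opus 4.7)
My plan is to start from the corollary immediately preceding the statement, which expresses $e^{\rho}R\cdot\mbox{ch }L_{\pi_{st}}(\lambda)$ as a sum over $P_\lambda$ of terms $(-1)^{l_{\lambda,\mu}}\mathcal{F}_W(e^{\mu^\rho})$, and then reindex by $C_\lambda^{\mathrm{Lexi}}$ via the bijection $\mu\mapsto\overline{\mu}$. For each $\mu\in P_\lambda\subset\mathbb{P}^+$, the weight $\mu^\rho_{st}$ is strictly dominant integral by Proposition~\ref{sub:rho^st strictly dominant}, hence $W$-regular, so the unique $w\in W$ with $w(\mu^\rho)=\overline{\mu}$ is well-defined. $W$-skew-invariance of $\mathcal{F}_W$ then gives $\mathcal{F}_W(e^{\mu^\rho})=(-1)^{l(w)}\mathcal{F}_W(e^{\overline{\mu}})$, so the lemma reduces to the congruence
\[
l_{\lambda,\mu}+l(w)\equiv|\lambda^\rho-\overline{\mu}|_{S_\lambda}\pmod 2.
\]

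To verify this, I would use the description of $w$ in Remark~\ref{Rem: the length of w s.t. w(mu)=00003Dbar{mu}}: writing the trivial path as $\theta=R_{i_1}\circ\dots\circ R_{i_N}$ with the $j$-th move taking a $\times$ from position $n_j$ to $n_j+k_j$ in the weight diagram, one has $l_{\lambda,\mu}=N$ and $l(w)=\sum_j(k_j-1)$, so $l_{\lambda,\mu}+l(w)=\sum_j k_j$, the total horizontal displacement of all $\times$'s along the path. Since the trivial path takes the $i$-th $\times$ of $D_\mu$ directly to the $i$-th $\times$ of $D_\lambda$, if $p_1<\dots<p_r$ and $q_1<\dots<q_r$ denote the $\times$-positions of $D_\mu$ and $D_\lambda$ respectively, this total displacement equals $\sum_{i=1}^r(q_i-p_i)$, with each $q_i-p_i\ge 0$ since right moves only send $\times$'s to the right.

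It remains to match $\sum_i(q_i-p_i)$ with $|\lambda^\rho-\overline{\mu}|_{S_\lambda}$, which is the step where I expect the main bookkeeping burden. According to the explicit description of $\overline{\mu}$ given just before the statement, $\overline{\mu}$ and $\lambda^\rho$ agree on typical entries, while the atypical entries of $\overline{\mu}$ sit at the atypical positions of $\lambda^\rho$ but carry the atypical values of $\mu^\rho$, arranged with $\varepsilon$-atypical values decreasing and $\delta$-atypical values increasing. These values are precisely the weight-diagram positions $\{p_i\}$ of the $\times$'s of $D_\mu$, whereas the atypical values of $\lambda^\rho$ are the positions $\{q_i\}$. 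Enumerating $S_\lambda=\{\beta_1,\dots,\beta_r\}$ compatibly with the pairing of atypical $\varepsilon$- and $\delta$-entries, one can then compute $\lambda^\rho-\overline{\mu}=\sum_i(q_i-p_i)\beta_i$, giving $|\lambda^\rho-\overline{\mu}|_{S_\lambda}=\sum_i(q_i-p_i)$ as required. Substituting the resulting sign identity into the preceding corollary and reindexing the sum over $P_\lambda$ by $\overline{\mu}\in C_\lambda^{\mathrm{Lexi}}$ then yields the claimed formula.
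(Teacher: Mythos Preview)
Your proposal is correct and follows essentially the same approach as the paper: both start from the preceding corollary, pass from $\mu^\rho$ to $\overline{\mu}$ via the Weyl group element $w$ described in Remark~\ref{Rem: the length of w s.t. w(mu)=00003Dbar{mu}}, and establish the identity $l_{\lambda,\mu}+l(w)=|\lambda^\rho-\overline{\mu}|_{S_\lambda}$ by interpreting each side as the total horizontal displacement $\sum_j k_j=\sum_i(q_i-p_i)$ of the $\times$'s along the trivial path. Note that you in fact prove the equality (not merely the mod~2 congruence you first stated), which is exactly what the paper does; your treatment is slightly more explicit about the $W$-regularity of $\mu^\rho$ and the computation $\lambda^\rho-\overline{\mu}=\sum_i(q_i-p_i)\beta_i$, but the argument is the same.
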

\begin{proof}
Let us show that for each $\mu\in P_{\lambda}$, 
\begin{equation}
\left(-1\right)^{l_{\lambda,\mu}}\mathcal{F}_{W}\left(e^{\mu^{\rho}}\right)=\left(-1\right)^{\left|\lambda^{\rho}-\overline{\mu}\right|_{S_{\lambda}}}\mathcal{F}_{W}\left(e^{\overline{\mu}}\right).\label{eq:path to hight}
\end{equation}
Let $w\in W$ such that $w(\mu^{\rho})=\overline{\mu}$. We claim
that $\left|\lambda^{\rho}-\overline{\mu}\right|_{S_{\lambda}}=l_{\lambda,\mu}+l\left(w\right)$
which proves (\ref{eq:path to hight}). Indeed, the number $\left|\lambda^{\rho}-\overline{\mu}\right|_{S_{\lambda}}$
is the sum of the differences between the atypical entries of $\lambda^{\rho}$
and $\overline{\mu}$. This is equal to the number of moves in the
path $l_{\lambda,\mu}$ plus the number of spots being skipped. By
Remark \ref{Rem: the length of w s.t. w(mu)=00003Dbar{mu}}, $l\left(w\right)$
is exactly the number of atypical spots skipped.

\end{proof}

\begin{proof}[Proof of Theorem \ref{thm:SZ arrow}]
\emph{}Our proof goes as follows. First we express $C_{\lambda}^{\mathrm{Lexi}}$
in terms of $(\lambda_{st}^{\rho})^{\Uparrow}$. Then we add more
summands to the expression \textrm{
\[
\sum_{\overline{\mu}\ \in\ C_{\lambda}^{\mathrm{Lexi}}}\left(-1\right)^{\left|\lambda^{\rho}-\overline{\mu}\right|_{S_{\lambda}}}\mathcal{F}_{W}(e^{\overline{\mu}})
\]
} which are annihilated by $\mathcal{F}_{W}$ but that allow us to
write the sum in a nicer way. 

Denote the $\lambda^{\rho}$-maximal atypical set by $S_{\lambda}=\left\{ \beta_{1},\ldots,\beta_{r}\right\} $,
where the elements $\beta_{i}=\varepsilon_{s_{i}}-\delta_{t_{i}}$
are ordered such that $t_{i}<t_{i+1}$. Let $k_{1},\ldots,k_{r}\in\mathbb{N}$
be such that $\lambda^{\rho}=(\lambda^{\rho})^{\Uparrow}-\sum_{i=1}^{r}k_{i}\beta_{i}$.
Then $k_{1}>\cdots>k_{r}=0$. Let $a_{1}<\ldots<a_{r}$ be the atypical
entries of $\lambda^{\rho}$. Then $a_{i}+k_{i}=a_{j}+k_{j}$ and
we have 
\begin{eqnarray*}
C_{\lambda}^{\mathrm{Lexi}} & = & \left\{ \lambda^{\rho}-\sum_{i=1}^{r}n_{i}\beta_{i}\ \mid\ n_{i}\in\mathbb{N},\ a_{1}-n_{1}<a_{2}-n_{2}<\ldots<a_{r}-n_{r}\right\} \\
 & = & \left\{ \lambda^{\rho}-\sum_{i=1}^{r}n_{i}\beta_{i}\ \mid\ n_{i}\in\mathbb{N},\ k_{1}+n_{1}>k_{2}+n_{2}>\ldots>k_{r}+n_{r}\right\} \\
 & = & \left\{ (\lambda^{\rho})^{\Uparrow}-\sum_{i=1}^{r}\left(k_{i}+n_{i}\right)\beta_{i}\ \mid\ n_{i}\in\mathbb{N},\ k_{1}+n_{1}>k_{2}+n_{2}>\ldots>k_{r}+n_{r}\right\} \\
 & = & \left\{ (\lambda^{\rho})^{\Uparrow}-\sum_{i=1}^{r}m_{i}\beta_{i}\ \mid\ m_{i}\in\mathbb{Z}_{\ge k_{i}},\ m_{1}>m_{2}>\ldots>m_{r}\right\} .
\end{eqnarray*}
Now let us enlarge $C_{\lambda}^{\mathrm{Lexi}}$, namely, we define
\[
\overline{C_{\lambda}^{\mathrm{Lexi}}}=\left\{ (\lambda^{\rho})^{\Uparrow}-\sum_{i=1}^{r}m_{i}\beta_{i}\ \mid\ m_{i}\in\mathbb{N},\ m_{1}>m_{2}>\ldots>m_{r}\right\} 
\]
 and show that \textrm{
\begin{equation}
\sum_{\overline{\mu}\ \in\ C_{\lambda}^{\mathrm{Lexi}}}\left(-1\right)^{\left|\lambda^{\rho}-\overline{\mu}\right|_{S_{\lambda}}}\mathcal{F}_{W}(e^{\overline{\mu}})=\sum_{\nu\ \in\ \overline{C_{\lambda}^{\mathrm{Lexi}}}}\left(-1\right)^{\left|\lambda^{\rho}-\nu\right|_{S_{\lambda}}}\mathcal{F}_{W}(e^{\nu})\label{eq:from lexi to lexi bar}
\end{equation}
} In particular, we claim that that $\mathcal{F}_{W}\left(e^{\nu}\right)=0$
if $\nu$ is of the form $\nu=(\lambda^{\rho})^{\Uparrow}-\sum_{i=1}^{r}m_{i}\beta_{i}\in\overline{C_{\lambda}^{\mathrm{Lexi}}}$
and $m_{i}<k_{i}$ for some $1\le i\le r$. Indeed, let $j$ be such
that $m_{j}<k_{j}$ and $m_{i}\ge k_{i}$ for all $i>j$. Note that
since $\lambda$ is totally connected, all the integers between $a_{r}$
and $a_{j}+1$ are entries of $\lambda^{\rho}$. The typical entries
of $\nu$ are the same as of $\lambda^{\rho}$ and there are $r-j+1$
atypical entries which are strictly greater than $a_{j}$. This implies
that there must be equal entries of the same type, that is $\nu$
has a stabilizer in $W$. Hence, by Lemma \ref{lem:reflection yields F=00003D0}
and Lemma \ref{lem:stabilizer has a reflection} we conclude that
$\mathcal{F}_{W}\left(e^{\nu}\right)=0$.

Let $W_{r}$ be the subgroup of $W$ that permutes $S_{\lambda}$.
This subgroup is generated by elements of the form $s{}_{\varepsilon_{i}-\varepsilon_{j}}s_{\delta_{i'}-\delta_{j'}}$
where $\varepsilon_{i}-\delta_{i'},\varepsilon_{j}-\delta_{j'}\in S_{\lambda}$
so $\left|W_{r}\right|=r!$ and all $w\in W_{r}$ have positive sign.
Hence 
\[
\mathcal{F}_{W}\left(\sum_{w\in W_{r}}we^{\nu}\right)=r!\mathcal{F}_{W}\left(e^{\nu}\right)
\]
for any $\nu\in\mathfrak{h}^{*}$. Let $W_{r}(\overline{C_{\lambda}^{\mathrm{Lexi}})}=\{w(\nu)\mid w\in W_{r},\ \nu\in\overline{C_{\lambda}^{\mathrm{Lexi}}}\}$.
Then 
\[
W_{r}(\overline{C_{\lambda}^{\mathrm{Lexi}})}=\left\{ (\lambda^{\rho})^{\Uparrow}-\sum_{i=1}^{r}m_{i}\beta_{i}\ \mid\ m_{i}\in\mathbb{N},\ m_{i}\neq m_{j}\text{ for }i\neq j\right\} ,
\]
and so elements from $((\lambda^{\rho})^{\Uparrow}-\mathbb{N}S_{\lambda})\setminus W_{r}(\overline{C_{\lambda}^{\mathrm{Lexi}}})$
have a stabilizer in $W$. Thus,
\begin{eqnarray*}
r!(-1)^{\left|(\lambda^{\rho})^{\Uparrow}-\lambda^{\rho}\right|_{S_{\lambda}}}\sum_{\nu\ \in\ \overline{C_{\lambda}^{\mathrm{Lexi}}}}\left(-1\right)^{\left|\lambda^{\rho}-\nu\right|_{S_{\lambda}}}\mathcal{F}_{W}\left(e^{\nu}\right) & = & \sum_{\nu\ \in\ \overline{C_{\lambda}^{\mathrm{Lexi}}}}\left(-1\right)^{\left|(\lambda^{\rho})^{\Uparrow}-\nu\right|_{S_{\lambda}}}\mathcal{F}_{W}\left(\sum_{w\in W_{r}}e^{w(\nu)}\right)\\
 & = & \sum_{\nu\ \in\ (\lambda^{\rho})^{\Uparrow}-\mathbb{N}S_{\lambda}}\left(-1\right)^{\left|(\lambda^{\rho})^{\Uparrow}-\nu\right|_{S_{\lambda}}}\mathcal{F}_{W}\left(e^{\nu}\right)\\
 & = & \mathcal{F}_{W}\left(\sum_{\nu\ \in\ (\lambda^{\rho})^{\Uparrow}-\mathbb{N}S_{\lambda}}\left(-1\right)^{\left|(\lambda^{\rho})^{\Uparrow}-\nu\right|_{S_{\lambda}}}e^{\nu}\right)\\
 & = & \mathcal{F}_{W}\left(\frac{e^{(\lambda^{\rho})^{\Uparrow}}}{\prod_{\beta\in S_{\lambda}}\left(1+e^{-\beta}\right)}\right).
\end{eqnarray*}

\end{proof}
The character formula in the following theorem is motivated by the
denominator identity given in \cite[(1.10)]{GKMP} for $\pi_{st}$,
and can be proven using Lemma \ref{lem:Lexi sum}, Formula (\ref{eq:from lexi to lexi bar})
and the methods above.
\begin{thm}
\label{thm:other character formula in standard}Let $\lambda_{st}$
be a totally connected weight with a $\lambda^{\rho}$-maximal isotropic
set $\beta_{1},\dots,\beta_{r}$ ordered such that $\beta_{i}<\beta_{i+1}$
for $i=1,\dots,r-1$. Then
\[
e^{\rho}R\cdot\mbox{ch }L_{\pi_{st}}\left(\lambda\right)=\mathcal{F}_{W}\left(\frac{e^{(\lambda^{\rho})^{\Uparrow}}}{\left(1+e^{-\beta_{1}}\right)\left(1-e^{-\beta_{1}-\beta_{2}}\right)\ldots\left(1-\left(-1\right)^{r}e^{-\sum_{i=1}^{r}\beta_{i}}\right)}\right).
\]

\end{thm}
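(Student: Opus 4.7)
The plan is to mirror the structure of the proof of Theorem~\ref{thm:SZ arrow}, swapping its symmetric Weyl-type denominator $\prod(1+e^{-\beta})$ for the alternating product that appears here, and then checking that the end results agree.

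The starting point is the identity obtained by combining Lemma~\ref{lem:Lexi sum} with equality (\ref{eq:from lexi to lexi bar}), namely
\begin{equation*}
e^{\rho}R\cdot\mathrm{ch}\,L_{\pi_{st}}(\lambda)=\sum_{\nu\in\overline{C_{\lambda}^{\mathrm{Lexi}}}}(-1)^{|\lambda^{\rho}-\nu|_{S_{\lambda}}}\mathcal{F}_{W}(e^{\nu}),
\end{equation*}
where $\overline{C_{\lambda}^{\mathrm{Lexi}}}=\{(\lambda^{\rho})^{\Uparrow}-\sum m_{i}\beta_{i}\mid m_{1}>m_{2}>\dots>m_{r}\ge 0\}$. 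The goal is to rewrite the right-hand side of the theorem as the same sum.

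Next I would expand the alternating product. For each $k$ I would write
\begin{equation*}
\bigl(1-(-1)^{k}e^{-\sum_{i\le k}\beta_{i}}\bigr)^{-1}=\sum_{n_{k}\ge 0}(-1)^{k n_{k}}e^{-n_{k}\sum_{i\le k}\beta_{i}},
\end{equation*}
multiply these $r$ factors, and change variables via $m_{i}:=\sum_{k\ge i}n_{k}$, which yields a bijection between $\mathbb{N}^{r}$ and weakly decreasing tuples $m_{1}\ge m_{2}\ge\dots\ge m_{r}\ge 0$ with $n_{k}=m_{k}-m_{k+1}$ and $m_{r+1}=0$. The resulting exponent is $-\sum_{i}m_{i}\beta_{i}$, and the telescoping identity $\sum_{k}k n_{k}=\sum_{k}m_{k}$ collapses the sign, producing
\begin{equation*}
\frac{e^{(\lambda^{\rho})^{\Uparrow}}}{\prod_{k=1}^{r}\bigl(1-(-1)^{k}e^{-\sum_{i\le k}\beta_{i}}\bigr)}=\sum_{m_{1}\ge\dots\ge m_{r}\ge 0}(-1)^{\sum m_{i}}e^{(\lambda^{\rho})^{\Uparrow}-\sum m_{i}\beta_{i}}.
\end{equation*}

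I would then apply $\mathcal{F}_{W}$. Any tuple with $m_{i}=m_{i+1}$ produces a weight whose two atypical entries associated with $\beta_{i}$ and $\beta_{i+1}$ are equal, so the weight is stabilized by the even product $s_{\varepsilon_{s_{i}}-\varepsilon_{s_{i+1}}}s_{\delta_{t_{i}}-\delta_{t_{i+1}}}$; by Lemma~\ref{lem:stabilizer has a reflection} and Lemma~\ref{lem:reflection yields F=00003D0} its contribution vanishes. This restricts the sum to the strictly decreasing tuples, which is exactly the indexing set of $\overline{C_{\lambda}^{\mathrm{Lexi}}}$.

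The final step is the sign comparison. For $\nu=(\lambda^{\rho})^{\Uparrow}-\sum m_{i}\beta_{i}$ we have $\lambda^{\rho}-\nu=\sum(m_{i}-k_{i})\beta_{i}$, hence $|\lambda^{\rho}-\nu|_{S_{\lambda}}=\sum(m_{i}-k_{i})$, so the two sign systems differ by the fixed factor $(-1)^{\sum k_{i}}=(-1)^{|(\lambda^{\rho})^{\Uparrow}-\lambda^{\rho}|_{S_{\lambda}}}$. I expect that the main obstacle is to verify that this overall sign is absorbed consistently, which will follow either from the convention fixing the ordering $\beta_{1}<\dots<\beta_{r}$ together with the ordering of atypical entries dictated by total connectivity, or from comparing directly with the GKMP denominator identity for the trivial module after specialization.
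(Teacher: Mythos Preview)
Your approach is exactly the one the paper has in mind: it only says the result ``can be proven using Lemma~\ref{lem:Lexi sum}, Formula~(\ref{eq:from lexi to lexi bar}) and the methods above,'' and your expansion of the alternating product via the substitution $m_i=\sum_{k\ge i}n_k$, followed by the stabilizer argument to pass from weakly to strictly decreasing tuples, is precisely that. One small simplification: when $m_i=m_{i+1}$ the resulting weight is already fixed by the single reflection $s_{\varepsilon_{s_i}-\varepsilon_{s_{i+1}}}$, so Lemma~\ref{lem:reflection yields F=00003D0} applies directly and Lemma~\ref{lem:stabilizer has a reflection} is unnecessary.

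The part you flag as the ``main obstacle'' is genuine, and your first proposed resolution does not work. With the ordering $\beta_1<\dots<\beta_r$ (so $\beta_1$ is the innermost arc), one has $k_1>\dots>k_r=0$ with $k_i=a_r-a_i$, and $\sum k_i=\sum(a_r-a_i)$ is certainly not forced to be even by total connectivity: already for $r=2$ with consecutive atypical entries $a_1,a_1+1$ one gets $\sum k_i=1$. So the overall sign $(-1)^{|(\lambda^\rho)^{\Uparrow}-\lambda^\rho|_{S_\lambda}}$ does not vanish automatically, and your argument as written proves the formula only up to this sign. To finish you must either verify directly against the GKMP denominator identity you cite (your second option), or re-examine whether the intended ordering of the $\beta_i$ in the statement is the reverse of the one you are using; the paper does not give enough detail to settle this, and as stated the theorem and your computation differ by exactly this factor.
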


\section{Kac-Wakimoto character formula for  KW-modules}

\subsection{The special case.}

Let us show that Theorem \ref{thm:SZ arrow} generalizes to other
sets of simple roots by proving that the character formula is preserved
under the steps of the shortening algorithm given in the proof of
Theorem \ref{thm:tame equivalent totally connected}. This will prove
the Kac-Wakimoto character formula for the special set of simple roots.

For a totally connected highest weight $\lambda$ of a finite dimensional
simple module $L_{\pi_{st}}(\lambda)$, we let $\pi_{k}$ denote the
set of simple roots obtained after $k$ steps of the shortening algorithm
applied to $\lambda^{\rho}$. Let $\lambda_{\pi_{k}}\in\mathfrak{h}^{*}$
be such that $L=L_{\pi_{k}}\left(\lambda_{\pi_{k}}\right)$. Set $S_{0}=S_{\lambda}$
and let $S_{k}$ be the $\lambda_{\pi_{k}}^{\rho}$-maximal isotropic
set corresponding to the arc arrangement obtained by the $k$-th step
of the algorithm. Then we have the following:
\begin{thm}
\label{thm:arrow formula for taming steps}Let $\lambda$ be a totally
connected weight and let $L=L_{\pi_{st}}\left(\lambda\right)$. Then
\begin{equation}
e^{\rho}R\cdot\mbox{ch }L=\frac{\left(-1\right)^{\left|\left(\lambda_{\pi_{k}}^{\rho}\right)^{\Uparrow}-\lambda_{\pi_{k}}^{\rho}\right|_{S_{k}}}}{r!}\mathcal{F}_{W}\left(\frac{e^{\left(\lambda_{\pi_{k}}^{\rho}\right)^{\Uparrow}}}{\prod_{\beta\in S_{k}}\left(1+e^{-\beta}\right)}\right).\label{eq:arrow formula k step}
\end{equation}
 
\end{thm}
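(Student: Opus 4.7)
My plan is to prove Theorem \ref{thm:arrow formula for taming steps} by induction on $k$. The base case $k = 0$ is exactly the Su--Zhang formula of Theorem \ref{thm:SZ arrow}. For the inductive step, I will show that a single step of the shortening algorithm, taking $(\pi_{k}, \lambda_{\pi_{k}}^{\rho}, S_{k})$ to $(\pi_{k+1}, \lambda_{\pi_{k+1}}^{\rho}, S_{k+1})$, preserves the value of the right-hand side of (\ref{eq:arrow formula k step}).

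The unifying observation is that one such step can be summarized by a Weyl group element $w \in W$ satisfying
\[
(\lambda_{\pi_{k+1}}^{\rho})^{\Uparrow} = w \cdot (\lambda_{\pi_{k}}^{\rho})^{\Uparrow}
\quad \text{and} \quad
S_{k+1} = w \cdot S_{k}.
\]
In Cases 1 and 2 of the proof of Theorem \ref{thm:tame equivalent totally connected} ($\Leftarrow$), the element $w$ is a single transposition $s_{\varepsilon_{p} - \varepsilon_{q}}$ (respectively $s_{\delta_{p'} - \delta_{q'}}$) that exchanges the index of the unmatched $(a+k+1)$ $\bullet$-entry (which becomes atypical after the step) with the index of the short-arc $\bullet$ that gets pushed outside the arc region and becomes typical. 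In Case 3, both the $\varepsilon$ and $\delta$ sides swap simultaneously, and $w$ is the product of two such transpositions. In all three cases, the maximum atypical value $a_{\max}$ is preserved throughout the shortening algorithm, which is why $(\lambda^{\rho})^{\Uparrow}$ transforms by a mere index permutation; the reflections used to push typical entries out of the arc region have $(\lambda,\beta)\neq 0$, so they leave $\lambda^{\rho}$ and $S$ invariant and only affect the geometric arrangement.

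Granting this relation, the expression $X_{k} := e^{(\lambda_{\pi_{k}}^{\rho})^{\Uparrow}}/\prod_{\beta \in S_{k}}(1 + e^{-\beta})$ transforms by $X_{k+1} = w \cdot X_{k}$, and the skew-invariance of $\mathcal{F}_{W}$ yields $\mathcal{F}_{W}(X_{k+1}) = (-1)^{l(w)} \mathcal{F}_{W}(X_{k})$. The invariance of the RHS of (\ref{eq:arrow formula k step}) under one step then reduces to the parity identity
\[
\bigl|(\lambda_{\pi_{k+1}}^{\rho})^{\Uparrow} - \lambda_{\pi_{k+1}}^{\rho}\bigr|_{S_{k+1}}
\equiv
\bigl|(\lambda_{\pi_{k}}^{\rho})^{\Uparrow} - \lambda_{\pi_{k}}^{\rho}\bigr|_{S_{k}} + l(w) \pmod{2}.
\]
To verify this, note that each transposition $s_{\varepsilon_{p} - \varepsilon_{q}}$ contributing to $w$ has odd length $2|q-p|-1$ and replaces an arc with atypical value $a+k$ by one with atypical value $a+k+1$, decreasing $|(\lambda^{\rho})^{\Uparrow} - \lambda^{\rho}|_{S}$ by exactly $1$; both sides thus shift by an odd amount, and the identity holds modulo $2$.

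The main obstacle I anticipate is Case 3, where reflecting the short arcs and breaking the outer arc $\bullet(a+k+1) - \times(a+k+1)$ into two new short arcs causes a simultaneous $\varepsilon$-swap and $\delta$-swap. One must carefully verify that the combinatorics of the new arc arrangement $S_{k+1}$ really agrees with $w \cdot S_{k}$, and that the total shift in $|(\lambda^{\rho})^{\Uparrow} - \lambda^{\rho}|_{S}$ matches $l(w)$ modulo $2$. Fortunately the individual sub-swaps commute and each contributes a parity matching its single-swap analogue, so the check reduces to the computations already done for Cases 1 and 2, after which the induction closes.
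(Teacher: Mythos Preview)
Your inductive strategy is exactly the one the paper uses: the base case is Theorem~\ref{thm:SZ arrow}, and the inductive step is handled by exhibiting a Weyl group element $w$ with
\[
(\lambda_{\pi_{k}}^{\rho})^{\Uparrow}=w\,(\lambda_{\pi_{k-1}}^{\rho})^{\Uparrow},
\qquad
S_{k}=w\,S_{k-1},
\]
so that $\mathcal{F}_{W}(X_{k})=(-1)^{l(w)}\mathcal{F}_{W}(X_{k-1})$, after which one matches $l(w)$ to the change in $\bigl|(\lambda^{\rho})^{\Uparrow}-\lambda^{\rho}\bigr|_{S}$.

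The gap is in your identification of $w$. After $k-1$ steps there are in general $s=|S_{k-1}\cap\pi_{k-1}|$ short arcs $\varepsilon_{m_{i}}-\delta_{n_{i}}$, $i=1,\dots,s$, all with the same value $b$, and the $k$-th step reflects \emph{all} of them simultaneously. In Case~1 the new short arcs are $\varepsilon_{m_{i-1}}-\delta_{n_{i}}$, $i=1,\dots,s$: every pairing shifts, not just one. Your single transposition $s_{\varepsilon_{m_{0}}-\varepsilon_{m_{s}}}$ does send $(\lambda_{\pi_{k-1}}^{\rho})^{\Uparrow}$ to $(\lambda_{\pi_{k}}^{\rho})^{\Uparrow}$, but it does \emph{not} send $S_{k-1}$ to $S_{k}$ once $s>1$, so the relation $X_{k}=w\cdot X_{k-1}$ fails. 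The correct $w$ (as in the paper) is the cyclic permutation $\varepsilon_{m_{i}}\mapsto\varepsilon_{m_{i-1}}$, $\varepsilon_{m_{0}}\mapsto\varepsilon_{m_{s}}$, which has $l(w)=s$; the same cycle works in Case~3, and the analogous $\delta$-cycle works in Case~2.

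This error propagates to your parity check. The quantity $\bigl|(\lambda^{\rho})^{\Uparrow}-\lambda^{\rho}\bigr|_{S}$ drops by $s$ in one step (each of the $s$ short arcs has its value raised from $b$ to $b+1$), not by $1$, and this matches $l(w)=s$ exactly, not just modulo $2$. Your claim that ``both sides shift by an odd amount'' is false when $s$ is even. Once you replace the transposition by the cycle and the ``decrease by $1$'' by ``decrease by $s$'', the argument goes through and is identical to the paper's.
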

Our proof is by induction on the steps of the shortening algorithm.
Let us first see an example.
\begin{example}
Given $\lambda_{st}^{\rho}$ corresponding to Diagram (\ref{diagram example in standard choice})
from Example \ref{exp:taming}, we show that Formula (\ref{eq:arrow formula k step})
holds after one step of the shortening algorithm.

To obtain $\left(\lambda_{st}^{\rho}\right)^{\Uparrow}=\lambda_{st}^{\rho}+2\left(\varepsilon_{4}-\delta_{2}\right)$
from the $\lambda_{st}^{\rho}$ diagram, each entry labeled with a
$5$ should be replaced by a $7$. To start the algorithm, we first
push the entries $1$ and $2$ outside of the arcs, which does not
change $\lambda_{st}^{\rho}$ and $S_{\lambda_{st}}$ so the formula
is clearly preserved. 

Next we apply $r_{\varepsilon_{4}-\delta_{2}}$ and obtain $\lambda_{\pi_{1}}^{\rho}$
corresponding to Diagram (\ref{diagram example after reflecting}).
Then $\lambda_{\pi_{1}}^{\rho}=\lambda_{st}^{\rho}+\left(\varepsilon_{4}-\delta_{2}\right)$
and $S_{1}=s_{\varepsilon_{3}-\varepsilon_{4}}S_{\lambda_{st}}$ .
So $\left(\lambda_{\pi_{1}}^{\rho}\right)^{\Uparrow}=\lambda_{\pi_{1}}^{\rho}+\left(\varepsilon_{3}-\delta_{2}\right)$
and $\left(\lambda_{\pi_{1}}^{\rho}\right)^{\Uparrow}=s_{\varepsilon_{3}-\varepsilon_{4}}\left(\lambda_{st}^{\rho}\right)^{\Uparrow}$.
Hence
\begin{eqnarray*}
e^{\rho}R\cdot\mbox{ch }L & = & \frac{\left(-1\right)^{2}}{2}\mathcal{F}_{W}\left(\left(-1\right)\cdot s_{\varepsilon_{3}-\varepsilon_{4}}\frac{e^{\left(\lambda_{st}^{\rho}\right)^{\Uparrow}}}{\prod_{\beta\in S_{\lambda_{st}}}\left(1+e^{-\beta}\right)}\right)\\
 & = & \frac{\left(-1\right)^{\left|\left(\lambda_{\pi_{1}}^{\rho}\right)^{\Uparrow}-\lambda_{\pi_{1}}^{\rho}\right|_{S_{1}}}}{2}\mathcal{F}_{W}\left(\frac{e^{\left(\lambda_{\pi_{1}}^{\rho}\right)^{\Uparrow}}}{\prod_{\beta\in S_{1}}\left(1+e^{-\beta}\right)}\right)
\end{eqnarray*}
and the formula is preserved. Finally, we move the $6$ out to obtain
Diagram (\ref{diagram example after one step}). Since this does not
change $\lambda_{\pi_{1}}^{\rho}$ and $S_{\pi_{1}}$ the formula
is preserved.\end{example}
\begin{proof}[Proof of Theorem \ref{thm:arrow formula for taming steps}. ]
Our proof is by induction on the steps of the shortening algorithm
from the proof of Theorem \ref{thm:tame equivalent totally connected}.
After $k$ steps, we obtain new data: $\pi_{k}$ $\lambda_{\pi_{k}}^{\rho}$,
$\lambda_{\pi_{k}}$, $S_{k}$. We express the RHS of Formula (\ref{eq:arrow formula k step})
in terms of this new data, and prove that it is equal to $e^{\rho}R\cdot\mbox{ch}L$
using the formula obtained after $k-1$ steps.

Before applying the algorithm we start by pushing the entries located
below the innermost arc outside of the arcs. Since this corresponds
to reflections with respect to roots which are not orthogonal to $\lambda_{\pi_{st}}^{\rho}$,
this changes $\pi_{st}$ but not $\lambda_{\pi_{st}}^{\rho}$ or $S_{\lambda_{st}}$.
So Formula (\ref{eq:arrow formula k step}) is unchanged.

Now suppose that the formula holds after $k-1$ steps of the algorithm,
that is,\\
\begin{equation}
e^{\rho}R\cdot\mbox{ch\ }L=\frac{\left(-1\right)^{\left|\left(\lambda_{\pi_{k-1}}^{\rho}\right)^{\Uparrow}-\lambda_{\pi_{k-1}}^{\rho}\right|_{S_{k-1}}}}{r!}\mathcal{F}_{W}\left(\frac{e^{\left(\lambda_{\pi_{k-1}}^{\rho}\right)^{\Uparrow}}}{\prod_{\beta\in S_{k-1}}\left(1+e^{-\beta}\right)}\right).\label{eq:k-1 induction step}
\end{equation}
Let us apply one more step and show that (\ref{eq:k-1 induction step})
implies (\ref{eq:arrow formula k step}). There are three cases, depending
on the location of the $b+1$ entry (see proof of Theorem \ref{thm:tame equivalent totally connected}
``$\Leftarrow$''). In each case, we reflect at all of the short arcs,
and then arrange the arcs to be of $\bullet-\times$ type. 

\begin{equation}\label{diagrams for proof of special case} \begin{aligned}
&\begin{tikzpicture} 
\node at (-0.75,0) {\dots};
\node at (0,0) {$\bullet$};
\node [below] at (0,0) {$\underset{\varepsilon_{m_0}}{\scriptstyle{b+1}}$};
\node at (1,0) {$\bullet$};
\node [below] at (1,0) {$\underset{\varepsilon_{m_1}}{\scriptstyle{b}}$};
\node at (2,0) {$\times$};
\node [below] at (2,0) {$\underset{\delta_{n_1}}{\scriptstyle{b}}$};

\node at (2.75,0) {\dots};
\node at (3.5,0) {$\bullet$};
\node [below] at (3.5,0) {$\underset{\varepsilon_{m_s}}{\scriptstyle{b}}$};
\node at (4.5,0) {$\times$};
\node [below] at (4.5,0) {$\underset{\delta_{n_s}}{\scriptstyle{b}}$};
\node  at (5.25,0) {\dots};

\draw (1,0.2) arc [radius=1, start angle=120, end angle= 60];
\draw (3.5,0.2) arc [radius=1, start angle=120, end angle= 60];


\end{tikzpicture}
&\longrightarrow\quad&\begin{tikzpicture}

\node at (7.3,0) {$\dots$};
\node at (8,0) {$\bullet$};
\node[below] at (8,0) {$\underset{\varepsilon_{m_0}}{\scriptstyle{b+1}}$};
\node at (9,0) {$\times$};
\node[below] at (9,0) {$\underset{\delta_{n_1}}{\scriptstyle{b+1}}$};
\node at (9.75,0) {$\dots$};
\node at (10.5,0) {$\bullet$};
\node[below] at (10.5,0) {$\underset{\varepsilon_{m_{s-1}}}{\scriptstyle{b+1}}$};
\node at (11.5,0) {$\times$};
\node[below] at (11.5,0) {$\underset{\delta_{n_s}}{\scriptstyle{b+1}}$};
\node at (12.5,0) {$\bullet$};
\node[below] at (12.5,0) {$\underset{\varepsilon_{m_s}}{\scriptstyle{b+1}}$};
\node at (13.25,0) {$\dots$};

\draw (9,0.2) arc [radius=1, start angle=60, end angle= 120];
\draw (11.5,0.2) arc [radius=1, start angle=60, end angle= 120];
\end{tikzpicture}\\
&\begin{tikzpicture} 
\node  at (-0.7,0) {\dots};
\node at (0,0) {$\bullet$};
\node [below] at (0,0) {$\underset{\varepsilon_{m_1}}{\scriptstyle{b}}$};
\node at (1,0) {$\times$};
\node [below] at (1,0) {$\underset{\delta_{n_1}}{\scriptstyle{b}}$};
\node at (1.75,0) {$\dots$};

\node at (2.5,0) {$\bullet$};
\node [below] at (2.5,0) {$\underset{\varepsilon_{m_s}}{\scriptstyle{b}}$};
\node at (3.5,0) {$\times$};
\node [below] at (3.5,0) {$\underset{\delta_{n_s}}{\scriptstyle{b}}$};
\node at (4.5,0) {$\times$};
\node [below] at (4.5,0) {$\underset{\delta_{n_{s+1}}}{\scriptstyle{b+1}}$};
\node at (5.25,0) {$\dots$};

\draw (0,0.2) arc [radius=1, start angle=120, end angle= 60];
\draw (2.5,0.2) arc [radius=1, start angle=120, end angle= 60];

\end{tikzpicture}
&\longrightarrow\quad&\begin{tikzpicture}

\node at (7.3,0) {$\dots$};
\node at (8,0) {$\times$};
\node[below] at (8,0) {$\underset{\delta_{n_1}}{\scriptstyle{b+1}}$};
\node at (9,0) {$\bullet$};
\node[below] at (9,0) {$\underset{\varepsilon_{m_1}}{\scriptstyle{b+1}}$};
\node at (10,0) {$\times$};
\node[below] at (10,0) {$\underset{\delta_{n_2}}{\scriptstyle{b+1}}$};
\node at (10.75,0) {$\dots$};
\node at (11.5,0) {$\bullet$};
\node[below] at (11.5,0) {$\underset{\varepsilon_{m_s}}{\scriptstyle{b+1}}$};
\node at (12.5,0) {$\times$};
\node[below] at (12.5,0) {$\underset{\delta_{n_{s+1}}}{\scriptstyle{b+1}}$};
\node at (13.25,0) {$\dots$};

\draw (10,0.2) arc [radius=1, start angle=60, end angle= 120];
\draw (12.5,0.2) arc [radius=1, start angle=60, end angle= 120];
\end{tikzpicture}\\
&\begin{tikzpicture}
\node at (-0.7,0) {\dots};
\node at (0,0) {$\bullet$};
\node [below] at (0,0) {$\underset{\varepsilon_{m_0}}{\scriptstyle{b+1}}$};
\node at (1,0) {$\bullet$};
\node [below] at (1,0) {$\underset{\varepsilon_{m_1}}{\scriptstyle{b}}$};
\node at (2,0) {$\times$};
\node [below] at (2,0) {$\underset{\delta_{n_1}}{\scriptstyle{b}}$};

\node at (3,0) {\dots};
\node at (4,0) {$\bullet$};
\node [below] at (4,0) {$\underset{\varepsilon_{m_s}}{\scriptstyle{b}}$};
\node at (5,0) {$\times$};
\node [below] at (5,0) {$\underset{\delta_{n_s}}{\scriptstyle{b}}$};
\node at (6,0) {$\times$};
\node [below] at (6,0) {$\underset{\delta_{n_{s+1}}}{\scriptstyle{b+1}}$};
\node at (6.7,0) {\dots};

\draw (1,0.2) arc [radius=1, start angle=120, end angle= 60];
\draw (4,0.2) arc [radius=1, start angle=120, end angle= 60];
\draw (0,0.2) arc [radius=8.8, start angle=110, end angle= 70];

\end{tikzpicture}
&\longrightarrow\quad&\begin{tikzpicture}
\node at (8.3,0) {$\dots$};
\node at (9,0) {$\bullet$};
\node[below] at (9,0) {$\underset{\varepsilon_{m_0}}{\scriptstyle{b+1}}$};
\node at (10,0) {$\times$};
\node[below] at (10,0) {$\underset{\delta_{n_{1}}}{\scriptstyle{b+1}}$};
\node at (11,0) {$\dots$};
\node at (12,0) {$\bullet$};
\node[below] at (12,0) {$\underset{\varepsilon_{m_s}}{\scriptstyle{b+1}}$};
\node at (13,0) {$\times$};
\node[below] at (13,0) {$\underset{\delta_{n_{s+1}}}{\scriptstyle{b+1}}$};
\node at (13.7,0) {$\dots$};

\draw (10,0.2) arc [radius=1, start angle=60, end angle= 120];
\draw (13,0.2) arc [radius=1, start angle=60, end angle= 120];
\end{tikzpicture}
\end{aligned}\end{equation}
\\
This does not change the typical entries or the maximal atypical entry,
so $\left(\lambda_{\pi_{k-1}}^{\rho}\right)^{\Uparrow}$ only differs
$\left(\lambda_{\pi_{k}}^{\rho}\right)^{\Uparrow}$ by a permutation
of the entries. Moreover, $\lambda_{\pi_{k}}^{\rho}$ is ``closer''
to $\left(\lambda_{st}^{\rho}\right)^{\Uparrow}$ than in the previous
step because some entries were increased. Indeed, the sequence of
odd reflections is with respect to the simple atypical odd roots $S_{k-1}\cap\pi_{k-1}$,
so we have 
\[
\lambda_{\pi_{k}}^{\rho}=\lambda_{\pi_{k-1}}^{\rho}+\sum_{\beta\in S_{k-1}\cap\pi_{k-1}}\beta.
\]
For each diagram from (\ref{diagrams for proof of special case}),
there exists $w\in W$ such that 
\[
wS_{k-1}=S_{k},\qquad w\left(\lambda_{\pi_{k-1}}^{\rho}\right)^{\Uparrow}=\left(\lambda_{\pi_{k}}^{\rho}\right)^{\Uparrow},\quad\text{{and}}\qquad l\left(w\right)=\left|S_{k-1}\cap\pi_{k-1}\right|.
\]
 Indeed, we have that $S_{k-1}\cap\pi_{k-1}={\{\beta_{1},\ldots,\beta_{s}\}}$,
where each $\beta_{i}=\varepsilon_{m_{i}}-\delta_{n_{i}}$ is a simple
atypical root corresponding to a diagram of (\ref{diagrams for proof of special case}).
For the first and third case, we take $w\in Sym_{m}$ such that $w\left(\varepsilon_{m_{i}}\right)=\varepsilon_{m_{i-1}}$,
$w\left(\varepsilon_{m_{0}}\right)=\varepsilon_{m_{s}}$ and all other
elements are fixed, and in the second case, we take $w\in Sym_{n}$
such that $w(\delta_{i})=\delta_{i+1}$, $w(\delta_{s+1})=\delta_{1}$
and all other elements are fixed. Then
\begin{eqnarray*}
e^{\rho}R\cdot\mbox{ch }L & = & \frac{\left(-1\right)^{\left|\left(\lambda_{\pi_{k-1}}^{\rho}\right)^{\Uparrow}-\lambda_{\pi_{k-1}}^{\rho}\right|_{S_{k-1}}}}{r!}\mathcal{F}_{W}\left(\frac{e^{\left(\lambda_{\pi_{k-1}}^{\rho}\right)^{\Uparrow}}}{\prod_{\beta\in S_{k-1}}\left(1+e^{-\beta}\right)}\right)\\
 & = & \frac{\left(-1\right)^{\left|\left(\lambda_{\pi_{k-1}}^{\rho}\right)^{\Uparrow}-\lambda_{\pi_{k-1}}^{\rho}\right|_{S_{k-1}}+\left|S_{\lambda_{k-1}}\cap\pi_{k-1}\right|}}{r!}\mathcal{F}_{W}\left(\left(-1\right)^{l\left(w\right)}w\frac{e^{\left(\lambda_{\pi_{k-1}}^{\rho}\right)^{\Uparrow}}}{\prod_{\beta\in S_{k-1}}\left(1+e^{-\beta}\right)}\right)\\
 & = & \frac{\left(-1\right)^{\left|\left(\lambda_{\pi_{k}}^{\rho}\right)^{\Uparrow}-\lambda_{\pi_{k}}^{\rho}\right|_{S_{k}}}}{r!}\mathcal{F}_{W}\left(\frac{e^{\left(\lambda_{\pi_{k}}^{\rho}\right)^{\Uparrow}}}{\prod_{\beta\in S_{k}}\left(1+e^{-\beta}\right)}\right).
\end{eqnarray*}

 Finally, we push the smallest unmatched entry outside of the arcs.
Since this corresponds to reflections with respect to roots which
are not orthogonal to $\lambda_{\pi_{k-1}}^{\rho}$, this does not
change formula (\ref{eq:arrow formula k step}).
\end{proof}
Since for the special set of simple roots all atypical entries are
equal, we conclude the following.
\begin{cor}
\label{cor:KW special tame}Let $L$ be a  KW-module with an admissible
choice of simple roots $\pi$, a corresponding highest weight $\lambda$
and a $\lambda^{\rho}$-maximal isotropic subset $S_{\lambda}\subset\pi$,
and let $r=|S_{\lambda}|$. If $\pi$ is the special set, then

\begin{equation}
e^{\rho}R\cdot\mbox{ch }L_{\pi}\left(\lambda\right)=\frac{1}{r!}\mathcal{F}_{W}\left(\frac{e^{\lambda^{\rho}}}{\prod_{\beta\in S_{\lambda}}\left(1+e^{-\beta}\right)}\right).\label{eq:KW}
\end{equation}

\end{cor}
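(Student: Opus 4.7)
The plan is to derive (\ref{eq:KW}) directly as the terminal case of Theorem \ref{thm:arrow formula for taming steps}. Since $L$ is a KW-module, Theorem \ref{thm:tame equivalent totally connected} produces a totally connected highest weight $\lambda_{st}$ of $L$ with respect to the standard choice $\pi_{st}$, and therefore Theorem \ref{thm:arrow formula for taming steps} applies. In particular, if the shortening algorithm applied to $\lambda_{st}^\rho$ terminates after $N$ steps, Theorem \ref{thm:arrow formula for taming steps} gives
\begin{equation*}
e^\rho R\cdot \mathrm{ch}\, L
= \frac{(-1)^{|(\lambda_{\pi_N}^\rho)^\Uparrow - \lambda_{\pi_N}^\rho|_{S_N}}}{r!}
\mathcal{F}_W\!\left(\frac{e^{(\lambda_{\pi_N}^\rho)^\Uparrow}}{\prod_{\beta\in S_N}(1+e^{-\beta})}\right).
\end{equation*}

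Next I would identify the triple $(\pi,\lambda,S_\lambda)$ in the statement of the corollary with the terminal triple $(\pi_N,\lambda_{\pi_N},S_N)$. By the remark following Definition \ref{def: special tame}, the arc diagram obtained at the last step of the shortening algorithm is a special arc diagram, and such a diagram is unique for $L$. Hence, under the hypothesis that $\pi$ is the special set of simple roots, we indeed have $\pi=\pi_N$, $\lambda_{\pi_N}^\rho=\lambda^\rho$ and $S_N=S_\lambda$, so the displayed formula above is exactly the formula whose sign and numerator I need to simplify.

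The final step is to observe that for the special arc diagram all atypical entries are equal by condition (2) of Definition \ref{def: special tame}; equivalently, every atypical entry of $\lambda^\rho$ already equals the maximal atypical entry. Therefore $(\lambda^\rho)^\Uparrow = \lambda^\rho$, which forces $|(\lambda^\rho)^\Uparrow - \lambda^\rho|_{S_\lambda}=0$ and collapses the sign factor to $+1$. Substituting into the displayed formula yields exactly (\ref{eq:KW}).

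There is essentially no hard step left: all the real work is absorbed into Theorem \ref{thm:arrow formula for taming steps}, whose inductive preservation of the Su--Zhang arrow formula under each odd reflection of the shortening algorithm is the technical heart. The corollary simply records that, once one reaches the special admissible diagram, the $\Uparrow$-operation becomes the identity and the sign disappears, so the Su--Zhang formula reduces to the clean Kac--Wakimoto shape. If anything were to require care, it would be only the bookkeeping match between the terminal output of the shortening algorithm and the unique special arc diagram of $L$, but this is precisely what the uniqueness remark after Definition \ref{def: special tame} provides.
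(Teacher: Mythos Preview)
Your proposal is correct and matches the paper's own argument: the corollary is deduced from Theorem \ref{thm:arrow formula for taming steps} by taking the terminal step of the shortening algorithm and noting that, for the special arc diagram, all atypical entries coincide so $(\lambda^\rho)^\Uparrow=\lambda^\rho$ and the sign collapses to $+1$. Your extra care in invoking the uniqueness remark after Definition \ref{def: special tame} to identify $(\pi,\lambda,S_\lambda)$ with the terminal data $(\pi_N,\lambda_{\pi_N},S_N)$ is appropriate bookkeeping that the paper leaves implicit.
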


\subsection{The general case.}

Let us prove the Kac-Wakimoto character formula for an arbitrary admissible
choice of simple roots by showing that we can move from any admissible
choice of simple roots to the special set in a way that preserves
the formula in (\ref{eq:KW}).

We will use the following four moves on arc diagrams.

\begin{enumerate}
\item\qquad\qquad\qquad
$\begin{array}{c}\begin{tikzpicture} 
\node  at (1,0) {$\dots$}; 
\node  at (2,0) {$\bullet$}; 
\node [below] at (2,0) {$\underset{\varepsilon_{i}}{\scriptstyle{a}}$}; 
\node  at (3,0) {$\times$}; 
\node [below] at (3,0) {$\underset{\delta_{j}}{\scriptstyle{b}}$}; 
\node  at (4,0) {$\dots$}; 
\node [below] at (5,0) {$\leftrightarrow$}; 
\node  at (6,0) {$\dots$}; 
\node  at (7,0) {$\times$}; 
\node [below] at (7,0) {$\underset{\delta_{j}}{\scriptstyle{b}}$}; 
\node  at (8,0) {$\bullet$}; 
\node [below] at (8,0) {$\underset{\varepsilon_{i}}{\scriptstyle{a}}$}; 
\node  at (9,0) {$\dots$}; 
\end{tikzpicture}\end{array}$\label{movediagram1} \\
 \item\qquad\qquad\qquad $\begin{array}{c}
\begin{tikzpicture}  \node  at (1,0) {$\dots$}; \node  at (2,0) {$\bullet$}; \node [below] at (2,0) {$\underset{\varepsilon_{i}}{\scriptstyle{a}}$}; \node  at (3,0) {$\times$}; \node [below] at (3,0) {$\underset{\delta_{j}}{\scriptstyle{a}}$}; \node  at (4,0) {$\dots$}; \node [below] at (5,0) {$\leftrightarrow$}; \node  at (6,0) {$\dots$}; \node  at (7,0) {$\times$}; \node [below] at (7,0) {$\underset{\delta_{j}}{\scriptstyle{a+1}}$}; \node  at (8,0) {$\bullet$}; \node [below] at (8,0) {$\underset{\varepsilon_{i}}{\scriptstyle{a+1}}$}; \node  at (9,0) {$\dots$};
\draw (3,0.2) arc [radius=1, start angle=60, end angle= 120]; \draw (8,0.2) arc [radius=1, start angle=60, end angle= 120]; \end{tikzpicture}\end{array}$\label{movediagram2}\\
\item\qquad\qquad $\begin{array}{c}\begin{tikzpicture} 
\node  at (1,0) {$\dots$}; \node  at (2,0) {$\bullet$}; \node [below] at (2,0) {$\underset{\varepsilon_{i}}{\scriptstyle{a}}$}; \node  at (3,0) {$\times$}; \node [below] at (3,0) {$\underset{\delta_{j}}{\scriptstyle{a}}$}; \node  at (4,0) {$\bullet$}; \node [below] at (4,0) {$\underset{\varepsilon_{i+1}}{\scriptstyle{a}}$}; \node  at (5,0) {$\dots$}; \node [below] at (6,0) {$\leftrightarrow$}; \node  at (7,0) {$\dots$}; \node  at (8,0) {$\bullet$}; \node [below] at (8,0) {$\underset{\varepsilon_{i}}{\scriptstyle{a}}$}; \node  at (9,0) {$\bullet$}; \node [below] at (9,0) {$\underset{\varepsilon_{i+1}}{\scriptstyle{a-1}}$}; \node  at (10,0) {$\times$}; \node [below] at (10,0) {$\underset{\delta_{j}}{\scriptstyle{a-1}}$}; \node  at (11,0) {$\dots$};
\draw (3,0.2) arc [radius=1, start angle=60, end angle= 120]; \draw (10,0.2) arc [radius=1, start angle=60, end angle= 120]; \end{tikzpicture}\end{array}$\label{movediagram3}\\
\item\qquad\qquad $\begin{array}{c}\begin{tikzpicture} 
\node  at (1,0) {$\dots$}; \node  at (2,0) {$\times$}; \node [below] at (2,0) {$\underset{\delta_{j-1}}{\scriptstyle{a}}$}; \node  at (3,0) {$\bullet$}; \node [below] at (3,0) {$\underset{\varepsilon_{i}}{\scriptstyle{a}}$}; \node  at (4,0) {$\times$}; \node [below] at (4,0) {$\underset{\delta_{j}}{\scriptstyle{a}}$}; \node  at (5,0) {$\dots$}; \node [below] at (6,0) {$\leftrightarrow$}; \node  at (7,0) {$\dots$}; \node  at (8,0) {$\bullet$}; \node [below] at (8,0) {$\underset{\varepsilon_{i}}{\scriptstyle{a-1}}$}; \node  at (9,0) {$\times$}; \node [below] at (9,0) {$\underset{\delta_{j-1}}{\scriptstyle{a-1}}$}; \node  at (10,0) {$\times$}; \node [below] at (10,0) {$\underset{\delta_{j}}{\scriptstyle{a}}$}; \node  at (11,0) {$\dots$};
\draw (4,0.2) arc [radius=1, start angle=60, end angle= 120]; \draw (9,0.2) arc [radius=1, start angle=60, end angle= 120]; \end{tikzpicture}\end{array}$ \label{movediagram4}
\end{enumerate}

Each move is achieved by applying an odd reflection to the set of
simple roots $\pi\leftrightarrow\pi'$ followed by a choice of the
arc arrangement. Let us first show that each of these moves preserves
the Kac-Wakimoto character formula.
\begin{prop}
\label{prop: moves preserve formula}Suppose that $(\pi,\lambda,S_{\lambda})$
and $(\pi',\lambda',S_{\lambda'})$ differ by one of the above moves.
Then 
\[
\mathcal{F}_{W}\left(\frac{e^{\lambda^{\rho}}}{\prod_{\beta\in S_{\lambda}}(1+e^{-\beta})}\right)=\mathcal{F}_{W}\left(\frac{e^{{\lambda^{\rho}}'}}{\prod_{\beta\in S_{\lambda'}}(1+e^{-\beta})}\right).
\]
\end{prop}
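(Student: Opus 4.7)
The plan is to treat each of the four moves separately. In each case I use the odd-reflection rule for how $\pi$, $\lambda^{\rho}$ and $S_{\lambda}$ transform, and then compare the two rational functions inside $\mathcal{F}_{W}$. Moves~(1) and~(2) will preserve the expression literally; Moves~(3) and~(4) will preserve it only after applying $\mathcal{F}_{W}$, by exhibiting an even reflection $\sigma\in W$ under which the difference $X-Y$ of the two expressions is fixed, and then invoking Lemma~\ref{lem:reflection yields F=00003D0}.

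For Move~(1), $\beta=\varepsilon_{i}-\delta_{j}$ is simple odd with $(\lambda,\beta)=a-b\neq 0$ (using $(\rho,\beta)=0$), so $(\lambda')^{\rho}=\lambda^{\rho}$; since $a\neq b$, the root $\beta$ is typical, the atypical set is unaffected, $S_{\lambda'}=S_{\lambda}$, and the two expressions are literally equal. For Move~(2), $\beta=\varepsilon_{i}-\delta_{j}\in S_{\lambda}$ is simple atypical, so $(\lambda')^{\rho}=\lambda^{\rho}+\beta$ and $S_{\lambda'}=(S_{\lambda}\setminus\{\beta\})\cup\{-\beta\}$; multiplying the numerator and denominator by $e^{-\beta}$ gives
\[
\frac{e^{\lambda^{\rho}+\beta}}{1+e^{\beta}}=\frac{e^{\lambda^{\rho}}}{1+e^{-\beta}},
\]
which is precisely the desired equality.

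For Move~(3), set $\beta_{1}=\varepsilon_{i}-\delta_{j}$, $\beta'=\delta_{j}-\varepsilon_{i+1}$, and $\beta_{2}=-\beta'=\varepsilon_{i+1}-\delta_{j}$. Here $\beta'\in\pi$ is atypical, so the odd reflection yields $(\lambda')^{\rho}=\lambda^{\rho}+\beta'$; the arc $\beta_{1}\in S_{\lambda}$ is replaced by $\beta_{2}\in S_{\lambda'}$, while the roots forming $P:=\prod_{\gamma\in S_{\lambda}\setminus\{\beta_{1}\}}(1+e^{-\gamma})$ are unchanged. Applying the Move~(2) simplification to the replaced factor, the two expressions become
\[
X=\frac{e^{\lambda^{\rho}}}{(1+e^{-\beta_{1}})\,P},\qquad Y=\frac{e^{\lambda^{\rho}}}{(1+e^{-\beta'})\,P}.
\]
Take $\sigma=s_{\varepsilon_{i}-\varepsilon_{i+1}}\in W$. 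Because $\lambda^{\rho}$ has equal $\varepsilon_{i}$- and $\varepsilon_{i+1}$-entries, $\sigma$ fixes $e^{\lambda^{\rho}}$; because $\varepsilon_{i+1}$ is typical in the before-diagram and the $\varepsilon$-indices of $S_{\lambda}$ are distinct, no root of $S_{\lambda}\setminus\{\beta_{1}\}$ involves $\varepsilon_{i}$ or $\varepsilon_{i+1}$, so $\sigma$ fixes $P$. Using $\sigma(e^{-\beta_{1}})=e^{\beta'}$ and $\sigma(e^{-\beta'})=e^{\beta_{1}}$, a direct common-denominator calculation gives
\[
\sigma\!\left(\frac{1}{1+e^{-\beta_{1}}}-\frac{1}{1+e^{-\beta'}}\right)=\frac{1}{1+e^{-\beta_{1}}}-\frac{1}{1+e^{-\beta'}},
\]
so $\sigma(X-Y)=X-Y$ and Lemma~\ref{lem:reflection yields F=00003D0} yields $\mathcal{F}_{W}(X-Y)=0$. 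Move~(4) will be handled by the mirror-image argument with the odd simple root $\beta'=\delta_{j-1}-\varepsilon_{i}$ and the even reflection $\sigma=s_{\delta_{j-1}-\delta_{j}}$.

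I expect the main obstacle to be the bookkeeping in Moves~(3) and~(4): carefully tracking how $(\lambda^{\rho},S_{\lambda})$ transform when an odd reflection is combined with a rearrangement of the arc arrangement, and verifying that $P$ is genuinely $\sigma$-invariant. The latter relies on the \emph{local} nature of the move---the three or four displayed nodes contain every arc whose endpoint lies among the indices swapped by $\sigma$, so nothing in $P$ can be disturbed by $\sigma$.
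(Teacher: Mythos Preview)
Your proposal is correct and follows essentially the same approach as the paper: Moves~(1) and~(2) are handled identically, and for Moves~(3) and~(4) both you and the paper reduce the claim to Lemma~\ref{lem:reflection yields F=00003D0} via the even reflection $s_{\varepsilon_{i}-\varepsilon_{i+1}}$ (resp.\ $s_{\delta_{j-1}-\delta_{j}}$), using that this reflection fixes $e^{\lambda^{\rho}}$ and the product $P$ over $S_{\lambda}\setminus\{\varepsilon_{i}-\delta_{j}\}$. The only cosmetic difference is that the paper first applies $\sigma$ inside $\mathcal{F}_{W}$ to convert the primed expression back to the $S_{\lambda}$-denominator (picking up a sign) and then shows the resulting \emph{sum} cancels a factor and is $\sigma$-invariant, whereas you rewrite $Y$ via the Move~(2) identity and verify directly that $\sigma(X-Y)=X-Y$; these are equivalent rearrangements of the same computation.
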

\begin{proof}
We prove the claim for each move defined above. We take $\lambda^{\rho}$
to correspond to the LHS diagram and ${\lambda^{\rho}}'$ to correspond
to the RHS diagram. 

The claim is obvious for move (\ref{movediagram1}), since in this
case ${\lambda^{\rho}}'=\lambda^{\rho}$ and $S_{\lambda'}=S_{\lambda}$.

In move (\ref{movediagram2}), we have ${\lambda^{\rho}}'=\lambda^{\rho}+(\varepsilon_{i}-\delta_{j})$,
and 
\[
S_{\lambda'}=(S_{\lambda}\setminus\{\varepsilon_{i}-\delta_{j}\})\cup\{\delta_{j}-\varepsilon_{i}\},
\]
so 
\[
\begin{array}{rcl}
\medskip{\displaystyle \mathcal{F}_{W}\left(\frac{e^{{\lambda^{\rho}}'}}{\prod_{\beta\in S_{\lambda'}}(1+e^{-\beta})}\right)} & = & {\displaystyle \mathcal{F}_{W}\left(\frac{e^{{\lambda^{\rho}}+\varepsilon_{i}-\delta_{j}}}{\prod_{\beta\in S_{\lambda\setminus(\varepsilon_{i}-\delta_{j})}}(1+e^{-\beta})(1+e^{-(\delta_{j}-\varepsilon_{i})})}\right)}\\
\medskip & = & {\displaystyle \mathcal{F}_{W}\left(\frac{e^{\lambda^{\rho}}}{\prod_{\beta\in S_{\lambda}}(1+e^{-\beta})}\right),}
\end{array}
\]

as desired.
\end{proof}
In move (\ref{movediagram3}), we have ${\lambda^{\rho}}'=\lambda^{\rho}+(\delta_{j}-\varepsilon_{i+1})$,
and 
\[
S_{\lambda'}=(S_{\lambda}\setminus\{\varepsilon_{i}-\delta_{j}\})\cup\{\varepsilon_{i+1}-\delta_{j}\},
\]
so 
\[
\begin{array}{rcl}
\medskip{\displaystyle \mathcal{F}_{W}\left(\frac{e^{{\lambda^{\rho}}'}}{\prod_{\beta\in S_{\lambda'}}(1+e^{-\beta})}\right)} & = & {\displaystyle \mathcal{F}_{W}\left(\frac{e^{\lambda^{\rho}-\varepsilon_{i+1}+\delta_{j}}}{\prod_{\beta\in S_{\lambda\setminus(\varepsilon_{i}-\delta_{j})}}(1+e^{-\beta})(1+e^{-(\varepsilon_{i+1}-\delta_{j})})}\right)}\\
\medskip & = & {\displaystyle \mathcal{F}_{W}\left(s_{\varepsilon_{i}-\varepsilon_{i+1}}\left(\frac{e^{\lambda^{\rho}-\varepsilon_{i}+\delta_{j}}}{\prod_{\beta\in S_{\lambda}}(1+e^{-\beta})}\right)\right)}\\
\medskip & = & {\displaystyle -\mathcal{F}_{W}\left(\frac{e^{\lambda^{\rho}-\varepsilon_{i}+\delta_{j}}}{\prod_{\beta\in S_{\lambda}}(1+e^{-\beta})}\right)}.
\end{array}
\]
 It remains to show that 
\[
-\mathcal{F}_{W}\left(\frac{e^{\lambda^{\rho}-\varepsilon_{i}+\delta_{j}}}{\prod_{\beta\in S_{\lambda}}(1+e^{-\beta})}\right)=\mathcal{F}_{W}\left(\frac{e^{\lambda^{\rho}}}{\prod_{\beta\in S_{\lambda}}(1+e^{-\beta})}\right),
\]
which follows from the following. 
\[
\begin{array}{rcl}
\medskip{\displaystyle \mathcal{F}_{W}\left(\frac{e^{\lambda^{\rho}}}{\prod_{\beta\in S_{\lambda}}(1+e^{-\beta})}+\frac{e^{\lambda^{\rho}-\varepsilon_{i}+\delta_{j}}}{\prod_{\beta\in S_{\lambda}}(1+e^{-\beta})}\right)} & = & {\displaystyle \mathcal{F}_{W}\left(e^{\lambda^{\rho}}\frac{1+e^{-(\varepsilon_{i}-\delta_{j})}}{\prod_{\beta\in S_{\lambda}}(1+e^{-\beta})}\right)}\\
\medskip & = & {\displaystyle \mathcal{F}_{W}\left(\frac{e^{\lambda^{\rho}}}{\prod\limits _{\beta\in S_{\lambda}\setminus\{\varepsilon_{i}-\delta_{j}\}}(1+e^{-\beta})}\right)}\\
\medskip & = & 0
\end{array}
\]
The last equality holds by Lemma \ref{lem:reflection yields F=00003D0},
since the argument of $\mathcal{F}_{W}$ is preserved by the simple
reflection $s_{\varepsilon_{i}-\varepsilon_{i-1}}$, which completes
the proof for the third move.

In move (\ref{movediagram4}), we have ${\lambda^{\rho}}'=\lambda^{\rho}+(\delta_{j-1}-\varepsilon_{i})$,
and 
\[
S_{\lambda'}=(S_{\lambda}\setminus\{\varepsilon_{i}-\delta_{j}\})\cup\{\varepsilon_{i}-\delta_{j-1}\}.
\]
The proof in this case is analogous to the third move except that
the roles of the $\varepsilon$'s and $\delta$'s are interchanged. 

Recall the definition of the special set of simple roots given in
Definition \ref{def: special tame}.
\begin{prop}
\label{prop: formula for special tame to any tame}One can move from
any admissible choice of simple roots to the special set of simple
roots in a way that preserves the character formula (\ref{eq:KW}). 
\end{prop}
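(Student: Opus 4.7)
The strategy is to construct, for each admissible triple $(\pi, \lambda, S_\lambda)$ of the KW-module $L$, a sequence of the four moves of Proposition \ref{prop: moves preserve formula} terminating at the special triple for $L$. Since each move preserves the expression $\mathcal{F}_W\!\left(\frac{e^{\lambda^\rho}}{\prod_{\beta \in S_\lambda}(1+e^{-\beta})}\right)$ and the cardinality $r = |S_\lambda|$ is intrinsic to $L$, Corollary \ref{cor:KW special tame} applied at the special endpoint transports formula (\ref{eq:KW}) back to the admissible starting point.

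I proceed in three phases. First, I apply move (2) at every arc of $\times{-}\bullet$ type to convert it into $\bullet{-}\times$ type. Second, I invoke Lemma \ref{lem:Tame structure}: in an admissible diagram, the typical entries between any two consecutive arcs form either a strictly decreasing run of $\bullet$-entries with consecutive integer values (subdiagram (\ref{subdiagram 1 of admissible})), or a strictly increasing run of $\times$-entries with consecutive integer values (subdiagram (\ref{subdiagram 2 of admissible})), and only one of these patterns occurs in a given diagram. In the first case, move (3) is applied iteratively at each consecutive pair of arcs to transport the intervening typical $\bullet$-entries one by one past the arcs while adjusting the arc values by $\pm 1$; chosen in the appropriate direction at each step, this equalizes all atypical entries to the common target value of the special diagram and deposits the displaced typical $\bullet$'s as a strictly decreasing run at one end of the diagram. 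The symmetric argument with move (4) handles the $\times$-case. Third, at each end of the diagram, move (1) is used to permute adjacent typical $\bullet$ and $\times$ nodes so that $\bullet$'s precede $\times$'s as required by condition (3) of Definition \ref{def: special tame}; such adjacent pairs necessarily have distinct entries, for otherwise $S_\lambda$ could be enlarged contradicting maximality, so move (1) applies. After the three phases, the diagram satisfies conditions (1)--(5) and therefore coincides with the unique special diagram for $L$.

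The main obstacle is verifying phase two carefully: one must choose the direction of move (3) (or (4)) at each step and check that the hypothesis of the move---a typical entry whose value differs by one from the arc value and sits adjacent to the arc---is maintained throughout. This follows inductively from the consecutive-integer structure forced by Lemma \ref{lem:Tame structure} together with the fact that a single application of move (3) or (4) shifts both the arc value and the value of the deposited typical entry in lockstep, preserving the invariant that the gap entry immediately flanking the arc is always the next integer required for the subsequent application.
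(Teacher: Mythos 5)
Your phases 1 and 3 match the paper's first steps, and phase 2 (consolidating the arcs via moves (3) and (4) using the structure of Lemma \ref{lem:Tame structure}) is also the paper's first action after fixing the arc types. But there is a genuine gap: once all arcs are adjacent with a common value $a$, a typical entry lying \emph{outside} the arc cluster may also equal $a$, and neither your phase 2 (which by your own description only touches the ``intervening'' entries between consecutive arcs) nor your phase 3 (which merely swaps typical $\bullet$'s and $\times$'s without changing their values) removes this coincidence. The resulting diagram then violates conditions (4) or (5) of Definition \ref{def: special tame} and is not the special diagram.

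A minimal counterexample: take $\mathfrak{gl}(2|1)$ with $\pi=\{\varepsilon_1-\delta_1,\ \delta_1-\varepsilon_2\}$ (here $\rho_\pi=0$), $\lambda=\lambda^\rho=a\varepsilon_1+a\varepsilon_2-a\delta_1$ for $a\in\mathbb{Z}$, and $S_\lambda=\{\varepsilon_1-\delta_1\}\subset\pi$. The arc diagram is $\bullet_a\,\times_a\,\bullet_a$ with a single short $\bullet$--$\times$ arc and a typical $\bullet$ of value $a$ to its right. Your three phases leave this diagram unchanged: there is only one arc, so nothing to consolidate, and no adjacent typical $\bullet$--$\times$ pair to swap. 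The special diagram for this module is instead $\bullet_a\,\bullet_{a-1}\,\times_{a-1}$, reached by one application of move (3), which slides the typical $\bullet$ leftward past the arc and decrements the atypical value to $a-1$. The paper closes exactly this gap with an additional iterative phase: whenever a typical entry ties the common atypical value $a$, it is slid through the entire arc cluster using moves (3) or (4) followed by (1), which drops the atypical value to $a-1$; this is repeated, and it terminates because the multiset of typical entry values is preserved while the atypical value strictly decreases. Your assertion that the directions in phase 2 can be ``chosen appropriately'' so that the atypical entries equalize directly at the target value of the special diagram is not justified, and the example above shows it cannot be realized when the admissible diagram offers no intervening moves on which to exercise a choice.
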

First we give an example.
\begin{example}
Start with the following arc diagram corresponding to an admissible
choice of simple roots.

\ad{1/\cir/8,3/\cir/7,2/\x/0,4/\x/5,5/\cir/5,6/\x/5,7/\x/6,8/\cir/6,9/\x/6,10/\cir/6,11/\x/6,12/\cir/4,13/\cir/2,14/\cir/1}{5/1/60/120,
8/1/60/120, 11/1/60/120}\\
Use move (\ref{movediagram2}) twice to arrange the arcs to be of
$\bullet-\times$ type.

\ad{1/\cir/8,3/\cir/7,2/\x/0,4/\cir/4,5/\x/4,6/\x/5,7/\cir/5,8/\x/5,9/\x/6,10/\cir/6,11/\x/6,12/\cir/4,13/\cir/2,14/\cir/1}{5/1/60/120,
8/1/60/120, 11/1/60/120}\\
Use move (\ref{movediagram4}) three times to take all the intermediate
\x's outside the arcs.

\ad{1/\cir/8,3/\cir/7,2/\x/0,4/\x/5,5/\x/6,6/\cir/6,7/\x/6,8/\cir/6,9/\x/6,10/\cir/6,11/\x/6,12/\cir/4,13/\cir/2,14/\cir/1}{7/1/60/120,
9/1/60/120, 11/1/60/120}\\
Use move (\ref{movediagram1}) on each side of the arcs to organize
the typical entries to have the \cir's precede the \x's.

\ad{1/\cir/8,2/\cir/7,3/\x/0,4/\x/5,5/\x/6,6/\cir/6,7/\x/6,8/\cir/6,9/\x/6,10/\cir/6,11/\x/6,12/\cir/4,13/\cir/2,14/\cir/1}{7/1/60/120,
9/1/60/120, 11/1/60/120}\\
Use moves (4) and (\ref{movediagram1}) to move the typical \x-entry
6 to the right of the arcs.

\ad{1/\cir/8,2/\cir/7,3/\x/0,4/\x/5,5/\cir/5,6/\x/5,7/\cir/5,8/\x/5,9/\cir/5,10/\x/5,14/\x/6,11/\cir/4,12/\cir/2,13/\cir/1}{6/1/60/120,
8/1/60/120, 10/1/60/120}\\
Use moves (4) and (\ref{movediagram1}) to move the typical \x-entry
5 to the right of the arcs.

\ad{1/\cir/8,2/\cir/7,3/\x/0,4/\cir/4,5/\x/4,6/\cir/4,7/\x/4,8/\cir/4,9/\x/4,13/\x/5,14/\x/6,10/\cir/4,11/\cir/2,12/\cir/1}{5/1/60/120,
7/1/60/120, 9/1/60/120}\\
Use moves (\ref{movediagram3}) and (\ref{movediagram1}) to move
the typical \x-entry 4 to the left of the arcs.

\ad{1/\cir/8,2/\cir/7,3/\cir/4,4/\x/0,5/\cir/3,6/\x/3,7/\cir/3,8/\x/3,9/\cir/3,10/\x/3,13/\x/5,14/\x/6,11/\cir/2,12/\cir/1}{6/1/60/120,
8/1/60/120, 10/1/60/120}\\
The resulting diagram is the special arc diagram. \end{example}
\begin{proof}[Proof of Proposition \ref{prop: formula for special tame to any tame}]
 We show that we can move from an arbitrary admissible choice of
simple roots to the special set in a way that uses only moves (\ref{movediagram1})
- (\ref{movediagram4}) defined above. Since each of these moves was
shown to preserve the character formula in Proposition \ref{prop: moves preserve formula},
the result then follows.

Consider an arc diagram corresponding to an arbitrary admissible choice
of simple roots. Since all arcs are short, we can use move (\ref{movediagram2})
to arrange all of the arcs to be of $\bullet-\times$ type. Then we
use move (\ref{movediagram3}) or move (\ref{movediagram4}) to place
all the arcs next to each other, which is possible due to Lemma \ref{lem:Tame structure}.
Then by Lemma \ref{lem:Tame structure}, all of the atypical entries
are equal, and we denote them by $a$. Then, on each side of the arcs,
we use move (\ref{movediagram1}) to organize the typical entries
so that the $\bullet$'s precede the $\times$'s.

Since the diagram corresponds to a finite dimensional module, we have
at each end, that the typical $\times$-entries are strictly increasing,
while the typical $\bullet$-entries are strictly decreasing (see
Remark \ref{rem: arc diagram entries}). Moreover, we can show that
on the left end, the $\times$-entries are $\leqslant a$ while the
$\bullet$-entries are $>a$. Indeed, if the biggest $\times$-entry
is greater than $a$, then the node can be moved inside the first
arc by using move (\ref{movediagram1}), and we reach a contradiction
on the value of the adjacent $\times$ entries. Also, if the smallest
$\bullet$-entry is not smaller than $a$, the node can be moved next
to the first arc by using move (\ref{movediagram1}), and the same
problem arises. Similarly, on the right end the $\times$-entries
are $>a$ while the $\bullet$-entries are $\leqslant a$. Together
with the fact that no additional arcs are possible by the maximality
property, this implies that all typical entries are distinct.

The only way that the resulting diagram is not the special arc diagram
is if one of the two inequalities above is an equality, and in particular,
if one of the typical entries equals $a$. If it is the first one,
then we use move (\ref{movediagram4}) to transfer the relevant $\times$
to the right of the arcs and then move (\ref{movediagram1}) to transfer
it to the right of all $\bullet$'s. If it is the second one, then
we use move (\ref{movediagram3}) to transfer the relevant $\bullet$
to the left of the arcs and then move (\ref{movediagram1}) to transfer
it to the left of all $\times$'s. The resulting diagram satisfies
the properties of the previous paragraph with the atypical entries
now labeled by $a-1$.

We repeat the above step until we cannot do it anymore. After each
step the atypical entries are decreased by $1$, while the labeling
set for the typical entries remains the same. Hence the process must
terminate, and the resulting diagram will be the special arc diagram.
\end{proof}
When combined with Corollary \ref{cor:KW special tame}, this concludes
the proof of the Kac-Wakimoto character formula for $\mathfrak{gl}(m|n)$
and we have the following theorem.
\begin{thm}
Let $L$ be a finite dimensional simple module. For any choice of
simple roots $\pi$ and weight $\lambda,$ such that $L=L_{\pi}(\lambda)$
and $\pi$ contains a $\lambda^{\rho}$-maximal isotropic subset $S_{\lambda}$
we have that

\[
e^{\rho}R\cdot\mbox{ch }L_{\pi}\left(\lambda\right)=\frac{1}{r!}\sum_{w\in W}(-1)^{l(w)}w\left(\frac{e^{\lambda^{\rho}}}{\prod_{\beta\in S_{\lambda}}\left(1+e^{-\beta}\right)}\right)
\]

where $r=|S_{\lambda}|$.
\end{thm}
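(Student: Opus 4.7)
The plan is to assemble three main structural results already established in the paper: the characterization of KW-modules via totally connected highest weights in the standard choice, the validity of the Kac-Wakimoto formula at the special set of simple roots, and the invariance of the right-hand side of the formula under a sufficient family of odd reflections on admissible arc diagrams.

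First, given $L=L_\pi(\lambda)$ with $\pi$ admissible and $S_\lambda\subset\pi$ a $\lambda^\rho$-maximal isotropic set, Theorem \ref{thm:tame equivalent totally connected} guarantees that the highest weight of $L$ with respect to $\pi_{st}$ is totally connected. Running the shortening algorithm (see Remark \ref{Rem: description of special tame}) then produces the special arc diagram for $L$, yielding an admissible triple $(\pi^{sp},\lambda_{\pi^{sp}}^\rho,S^{sp})$ in which every atypical entry equals the common maximal atypical entry. For this special data Corollary \ref{cor:KW special tame} gives the Kac-Wakimoto formula directly, since $(\lambda_{\pi^{sp}}^\rho)^{\Uparrow}=\lambda_{\pi^{sp}}^\rho$ makes the sign in Theorem \ref{thm:arrow formula for taming steps} trivial.

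Second, I would transfer this identity from the special data to the arbitrary admissible data $(\pi,\lambda_\pi^\rho,S_\lambda)$. This is precisely the content of Proposition \ref{prop: formula for special tame to any tame}, which shows that the arc diagram of any admissible set of simple roots for $L$ can be connected to the special arc diagram by a sequence of the four elementary moves listed just before Proposition \ref{prop: moves preserve formula}; and Proposition \ref{prop: moves preserve formula} verifies that each such move leaves the expression
\[
\mathcal{F}_W\!\left(\frac{e^{\lambda^\rho}}{\prod_{\beta\in S_\lambda}\left(1+e^{-\beta}\right)}\right)
\]
invariant. Combining these two facts, the right-hand side of the Kac-Wakimoto formula for $(\pi,S_\lambda)$ equals that for $(\pi^{sp},S^{sp})$, and after dividing by $r!$ the first step concludes the argument.

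The main obstacle is not a new calculation but the conceptual packaging: one must recognize that every admissible diagram is reachable from the special one via moves that preserve the Weyl-skew-symmetrized expression above. All the combinatorial work has been absorbed into Lemma \ref{lem:Tame structure} (local form of admissible diagrams between consecutive arcs), Proposition \ref{prop: moves preserve formula} (invariance of the formula under each of the four moves, where move (\ref{movediagram3}) is the least obvious because it requires the auxiliary identity of Lemma \ref{lem:reflection yields F=00003D0} applied to $s_{\varepsilon_i-\varepsilon_{i+1}}$), and Proposition \ref{prop: formula for special tame to any tame} (a terminating procedure reducing any admissible diagram to the special one). Once these are in place the theorem follows immediately as a two-line assembly.
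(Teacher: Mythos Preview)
Your proposal is correct and follows essentially the same route as the paper: establish the formula at the special admissible data via Corollary~\ref{cor:KW special tame}, then transport it to an arbitrary admissible $(\pi,\lambda,S_\lambda)$ using Proposition~\ref{prop: formula for special tame to any tame} together with the move-by-move invariance of Proposition~\ref{prop: moves preserve formula}. The paper packages this exactly as you describe, concluding the theorem in one line once those two propositions are in hand.
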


\section{A determinantal character formula for KW-modules of $\mathfrak{gl}(m|n)$.}

In this section, we use the Kac-Wakimoto character formula to prove
a determinantal character formula for KW-modules for $\mathfrak{gl}(m|n)$,
which is motivated by the determinantal character formula proven in
\cite{MV2} for critical modules labeled by non-intersecting composite
partitions. Our determinantal character formula can be expressed using
the data of the special arc diagram for a KW-module $L$ (recall Definition
\ref{def: special tame}).

\subsection{A determinantal character formula.}

Consider the special arc diagram of a KW-module $L.$

\begin{equation}\label{special tame for det formula} \hspace*{6.45cm}\raisebox{1cm}{$\stackrel{r\text{ pairs}}{\overbrace{\hspace{4.3cm}}}$} \hspace*{-10.45cm} \begin{tikzpicture} \foreach \p/\t/\w in {1/\cir/a_1,2/\dots/,3/\cir/a_p,4/\x/b_1,5/\dots/,6/\x/b_q,7/\cir/z,8/\x/z,9/\dots/,10/\cir/z,11/\x/z,12/\cir/a_{p+1},13/\dots/,14/\cir/a_{p+t}, 15/\x/b_{q+1}, 16/\dots/, 17/\x/b_{q+s}}  {   \node [below] at (\p,0) {$\scriptstyle{\w}$};   \node at (\p,0) {\t};  } \foreach \s/\r/\sa/\ea in {8/1/60/120,11/1/60/120}  {   \draw (\s,0.2) arc [radius=\r, start angle=\sa, end angle=\ea];  } \end{tikzpicture} \end{equation}Note
that $p+t=m-r$ and $q+s=n-r$. We set $x_{i}=e^{\varepsilon_{i}}$
and \textrm{$y_{j}=e^{-\delta_{j}}$,} and we let
\[
X=\left(x_{i}^{a_{j}}\right)_{\underset{1\le j\le m-r}{{\scriptscriptstyle 1\le i\le m}}},\quad Y=\left(y_{j}^{b_{i}}\right)_{\underset{1\le j\le n}{{\scriptscriptstyle 1\le i\le n-r}}},\quad Z=\left(\frac{x_{i}^{z}y_{j}^{z}}{1+\left(x_{i}y_{j}\right)^{-1}}\right)_{\underset{1\le j\le n}{{\scriptscriptstyle 1\le i\le m}}}.
\]
Then, $X$ encodes the typical $\bullet$-entries, $Y$ encodes the
typical $\times$-entries, and $Z$ encodes atypical entries.

$\ $
\begin{thm}
\label{thm:determinantal character formula} Let $L$ be a KW-module
with special arc diagram (\ref{special tame for det formula}). Then
one has
\begin{equation}
e^{\rho}R\cdot\mbox{ch\ }L=\left(-1\right)^{r(t+q)}\begin{vmatrix}X & Z\\
0 & Y
\end{vmatrix}.\label{eq:det formula}
\end{equation}

\end{thm}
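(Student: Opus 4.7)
The plan is to apply the Kac--Wakimoto character formula (Theorem \ref{thm:main}) to the special admissible set of simple roots for $L$ and convert the resulting alternating sum over $W$ into the block determinant on the right-hand side via an iterated Laplace expansion.

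First I would label the $\bullet$-nodes of the special arc diagram (\ref{special tame for det formula}) as $\varepsilon_1, \ldots, \varepsilon_m$ and the $\times$-nodes as $\delta_1, \ldots, \delta_n$ in left-to-right order. Then $U := \{p+1, \ldots, p+r\}$ and $V := \{q+1, \ldots, q+r\}$ are the atypical $\varepsilon$- and $\delta$-positions, $S_\lambda = \{\varepsilon_{p+k} - \delta_{q+k} : 1 \le k \le r\}$, and all atypical entries equal $z$. Under the substitutions $x_i = e^{\varepsilon_i}$, $y_j = e^{-\delta_j}$, the atypical factor of $e^{\lambda^\rho}/\prod_{\beta \in S_\lambda}(1+e^{-\beta})$ is exactly $\prod_{k=1}^r Z_{p+k,\,q+k}$, so Theorem \ref{thm:main} yields
\[
e^\rho R \cdot \mathrm{ch}\, L \;=\; \frac{1}{r!}\, \mathcal{F}_W\!\left(\prod_{i \in U^c} x_i^{a_{\phi_X(i)}} \cdot \prod_{j \in V^c} y_j^{b_{\phi_Y(j)}} \cdot \prod_{k=1}^r Z_{p+k,\,q+k}\right),
\]
where $\phi_X : U^c \to \{1, \ldots, m-r\}$ and $\phi_Y : V^c \to \{1, \ldots, n-r\}$ are the order-preserving identifications matching a typical position with its label.

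Next, observe that the argument of $\mathcal{F}_W$ is pointwise fixed by the subgroup $W_r \subset W$ of order $r!$ that simultaneously permutes $U$ and $V$ via the same element of $\mathrm{Sym}_r$ under the matching $p+k \leftrightarrow q+k$, and every element of $W_r$ has sign $+1$. Thus $\frac{1}{r!} \mathcal{F}_W(\cdot)$ collapses to a signed sum over $W / W_r$, whose cosets are parametrized by tuples $(I, J, \sigma_T, \tau_T, \psi)$ with $I = \sigma(U)$ and $J = \tau(V)$ of size $r$, bijections $\sigma_T : U^c \to I^c$ and $\tau_T : V^c \to J^c$, and the induced bijection $\psi : I \to J$. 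After factoring out the shuffle signs $\epsilon(I), \epsilon(J)$ that decompose $\mathrm{sgn}(\sigma) \mathrm{sgn}(\tau)$, the inner sums over $\sigma_T$, $\tau_T$, and $\psi$ produce the determinants $\det X_{I^c, \cdot}$, $\det Y_{\cdot, J^c}$, and $\det Z_{I, J}$, respectively, so the expression equals
\[
\sum_{I, J} \epsilon(I)\,\epsilon(J)\, \det X_{I^c, \cdot} \cdot \det Z_{I, J} \cdot \det Y_{\cdot, J^c}.
\]

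Finally, I would compute the right-hand determinant by an iterated Laplace expansion: first along the last $n - r$ rows, where the zero block forces the chosen columns to lie in $\{m-r+j : j \in J^c\}$ for some $r$-subset $J \subset \{1, \ldots, n\}$, producing a factor of $\det Y_{\cdot, J^c}$; then along the last $r$ columns of the remaining $m \times m$ minor $[X \mid Z_{\cdot, J}]$, selecting an $r$-subset $I$ of rows for $Z_{I, J}$ and leaving $\det X_{I^c, \cdot}$ for the complementary $X$-minor. This reproduces the same indexed sum $\sum_{I, J} (\pm)\, \det X_{I^c, \cdot}\, \det Z_{I, J}\, \det Y_{\cdot, J^c}$. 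The main obstacle --- and essentially the only nontrivial content of the proof --- is verifying that the shuffle signs $\epsilon(I)\epsilon(J)$ from the KW side combine with the two Laplace-expansion signs to an $(I,J)$-independent factor of $(-1)^{r(t+q)}$, which depends only on the block sizes $p, r, t$ on the $\varepsilon$-side and $q, r, s$ on the $\delta$-side. This sign bookkeeping is a careful but routine combinatorial computation; all other steps are essentially formal.
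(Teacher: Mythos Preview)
Your proposal is correct, but the paper takes a slicker route that sidesteps nearly all of the sign bookkeeping you flag as the ``main obstacle.'' Instead of expanding both sides via cosets of $W_r$ and iterated Laplace expansion and then matching the two indexed sums term by term, the paper proves an abstract lemma (Lemma~\ref{lem:general_det_formula}): for any matrix $M$ of the block shape in question, $\det M = \frac{1}{r!}\sum_{w\in Sym_m\times Sym_n}(-1)^{l(w)}w(z_{1,1}\cdots z_{m+n-r,m+n-r})$. This is established not by Laplace expansion but by a uniqueness argument (Lemma~\ref{lem:conditions}): the determinant is the unique polynomial on such matrices that is alternating in the first $m$ rows and last $n$ columns, multilinear in those rows and columns, and normalized on a suitable identity-like family; one then checks that the right-hand side has the same three properties. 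With this lemma in hand, the only sign computation left is to compare the diagonal product $z_{1,1}\cdots z_{m+n-r,m+n-r}$ of the specific matrix $\begin{psmallmatrix}X&Z\\0&Y\end{psmallmatrix}$ with $t_\lambda$; they differ by a single explicit permutation $(u,v)$ of length $rt+rq$, which immediately gives the factor $(-1)^{r(t+q)}$.

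Your approach has the advantage of being entirely elementary (no auxiliary characterization of the determinant), and it makes the connection to cofactor expansions explicit. The paper's approach buys you a clean, one-line sign computation in place of the $(I,J)$-dependent shuffle-versus-Laplace sign comparison that you correctly identify as the tedious part of your argument.
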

$\ $
\begin{rem}
The data defining (\ref{eq:det formula}) can also be determined from
the standard arc diagram of a KW-module $L$. In particular, the exponents
$a_{j}$ are the typical $\bullet$-entries, the exponents $b_{i}$
are the typical $\times$-entries, the exponent $z$ is the maximal
atypical entry, $r$ is the number of arcs, $t$ is the number of
typical $\bullet$-nodes below an arc, and $q$ is the number of typical
$\times$-nodes below an arc. See Remark \ref{Rem: description of special tame}
to recall the relationship between the standard arc diagram and the
special arc diagram.\end{rem}
\begin{example}
If the special arc diagram of $L$ is \begin{equation*}\begin{tikzpicture} 
\node at (0,0) {$\bullet$};
\node [below] at (0,0) {$\scriptstyle 8$};
\node  at (1,0) {$\times$};
\node [below] at (1,0) {$\scriptstyle 2$};
\node at (2,0) {$\bullet$};
\node [below] at (2,0) {$\scriptstyle 7$};
\node  at (3,0) {$\times$};
\node [below] at (3,0) {$\scriptstyle 7$};
\node at (4,0) {$\bullet$};
\node [below] at (4,0) {$\scriptstyle 7$};
\node  at (5,0) {$\times$};
\node [below] at (5,0) {$\scriptstyle 7$};
\node at (6,0) {$\bullet$};
\node [below] at (6,0) {$\scriptstyle 6$};
\node at (7,0) {$\bullet$};
\node[below] at (7,0) {$\scriptstyle 1$};
\node  at (8,0) {$\times$};
\node[below] at (8,0) {$\scriptstyle 11$};
\node  at (9,0) {$\times$};
\node[below] at (9,0) {$\scriptstyle 14$};

\draw (2,0.2) arc [radius=1, start angle=120, end angle= 60];
\draw (4,0.2) arc [radius=1, start angle=120, end angle= 60];

\end{tikzpicture}\end{equation*} then 
\[
e^{\rho}R\cdot\mbox{ch\ }L=-\left|\begin{array}{cccccccc}
{\scriptstyle x_{1}^{8}} & {\scriptstyle x_{1}^{6}} & {\scriptstyle x_{1}} & {\scriptstyle \frac{x_{1}^{{\scriptscriptstyle 7}}y_{1}^{{\scriptscriptstyle 7}}}{1+\left(x_{{\scriptscriptstyle 1}}y_{{\scriptscriptstyle 1}}\right)^{{\scriptscriptstyle -1}}}} & \frac{x_{1}^{7}y_{2}^{7}}{1+\left(x_{1}y_{2}\right)^{-1}} & \frac{x_{1}^{7}y_{3}^{7}}{1+\left(x_{1}y_{3}\right)^{-1}} & \frac{x_{1}^{7}y_{4}^{7}}{1+\left(x_{1}y_{4}\right)^{-1}} & \frac{x_{1}^{7}y_{5}^{7}}{1+\left(x_{1}y_{5}\right)^{-1}}\\
{\scriptstyle x_{2}^{8}} & {\scriptstyle x_{2}^{6}} & {\scriptstyle x_{2}} & {\scriptstyle \frac{x_{2}^{7}y_{1}^{7}}{1+\left(x_{2}y_{1}\right)^{-1}}} & \frac{x_{2}^{7}y_{2}^{7}}{1+\left(x_{2}y_{2}\right)^{-1}} & \frac{x_{2}^{7}y_{3}^{7}}{1+\left(x_{2}y_{3}\right)^{-1}} & \frac{x_{2}^{7}y_{4}^{7}}{1+\left(x_{2}y_{4}\right)^{-1}} & \frac{x_{2}^{7}y_{5}^{7}}{1+\left(x_{2}y_{5}\right)^{-1}}\\
{\scriptstyle x_{3}^{8}} & {\scriptstyle x_{3}^{6}} & {\scriptstyle x_{3}} & {\scriptstyle \frac{x_{3}^{7}y_{1}^{7}}{1+\left(x_{3}y_{1}\right)^{-1}}} & \frac{x_{3}^{7}y_{2}^{7}}{1+\left(x_{3}y_{2}\right)^{-1}} & \frac{x_{3}^{7}y_{3}^{7}}{1+\left(x_{3}y_{3}\right)^{-1}} & \frac{x_{3}^{7}y_{4}^{7}}{1+\left(x_{3}y_{4}\right)^{-1}} & \frac{x_{3}^{7}y_{5}^{7}}{1+\left(x_{3}y_{5}\right)^{-1}}\\
{\scriptstyle x_{4}^{8}} & {\scriptstyle x_{4}^{6}} & {\scriptstyle x_{4}} & \frac{x_{4}^{7}y_{1}^{7}}{1+\left(x_{4}y_{1}\right)^{-1}} & \frac{x_{4}^{7}y_{2}^{7}}{1+\left(x_{4}y_{2}\right)^{-1}} & \frac{x_{4}^{7}y_{3}^{7}}{1+\left(x_{4}y_{3}\right)^{-1}} & \frac{x_{4}^{7}y_{4}^{7}}{1+\left(x_{4}y_{4}\right)^{-1}} & \frac{x_{4}^{7}y_{5}^{7}}{1+\left(x_{4}y_{5}\right)^{-1}}\\
{\scriptstyle x_{5}^{8}} & {\scriptstyle x_{5}^{6}} & {\scriptstyle {\scriptstyle x_{5}}} & \frac{x_{1}^{7}y_{1}^{7}}{1+\left(x_{5}y_{1}\right)^{-1}} & \frac{x_{1}^{7}y_{2}^{7}}{1+\left(x_{5}y_{2}\right)^{-1}} & \frac{x_{5}^{7}y_{3}^{7}}{1+\left(x_{5}y_{3}\right)^{-1}} & \frac{x_{5}^{7}y_{4}^{7}}{1+\left(x_{5}y_{4}\right)^{-1}} & \frac{x_{6}^{7}y_{5}^{7}}{1+\left(x_{6}y_{5}\right)^{-1}}\\
{\scriptstyle 0} & {\scriptstyle 0} & {\scriptstyle 0} & {\scriptstyle y_{1}^{2}} & {\scriptstyle y_{2}^{2}} & {\scriptstyle y_{3}^{2}} & {\scriptstyle y_{4}^{2}} & {\scriptstyle y_{5}^{2}}\\
{\scriptstyle 0} & {\scriptstyle 0} & {\scriptstyle 0} & {\scriptstyle y_{1}^{11}} & {\scriptstyle y_{2}^{11}} & {\scriptstyle y_{3}^{11}} & {\scriptstyle {\scriptstyle y_{4}^{11}}} & {\scriptstyle y_{5}^{11}}\\
{\scriptstyle 0} & {\scriptstyle 0} & {\scriptstyle 0} & {\scriptstyle y_{1}^{14}} & {\scriptstyle y_{2}^{14}} & {\scriptstyle y_{3}^{14}} & {\scriptstyle y_{4}^{14}} & {\scriptstyle y_{5}^{14}}
\end{array}\right|.
\]

\end{example}

\subsection{Two linear-algebraic lemmas.}

For the proof of Theorem \ref{thm:determinantal character formula},
we will need the following lemmas. Suppose $m,n,r\in\mathbb{N}$ and
$m,n\geqslant r$, and let $\mathcal{M}$ be the set of matrices of
the form

\begin{equation}
\left(\begin{array}{ccc|ccc|ccc}
z_{11} & \cdots & z_{1,m-r} & z_{1,m-r+1} & \cdots & z_{1,m} & z_{1,m+1} & \cdots & z_{1,m+n-r}\\
\vdots & \ddots & \vdots & \vdots & \ddots & \vdots & \vdots & \ddots & \vdots\\
z_{m-r,1} & \cdots & z_{m-r,m-r} & z_{m-r,m-r+1} & \cdots & z_{m-r,m} & z_{m-r,m+1} & \cdots & z_{m-r,m+n-r}\\
\hline z_{m-r+1,1} & \cdots & z_{m-r+1,m-r} & z_{m+1-r,m-r+1} & \cdots & z_{m-r+1,m} & z_{m-r+1,m+1} & \cdots & z_{m-r+1,m+n-r}\\
\vdots & \ddots & \vdots & \vdots & \ddots & \vdots & \vdots & \ddots & \vdots\\
z_{m,1} & \cdots & z_{m,m-r} & z_{m,m-r+1} & \cdots & z_{m,m} & z_{m,m+1} & \cdots & z_{m,m+n-r}\\
\hline  &  &  & z_{m+1,m-r+1} & \cdots & z_{m+1,m} & z_{m+1,m+1} & \cdots & z_{m+1,m+n-r}\\
 & 0 &  & \vdots & \ddots & \vdots & \vdots & \ddots & \vdots\\
 &  &  & z_{m+n-r,m-r+1} & \cdots & z_{m+n-r,m} & z_{m+n-r,m+1} & \cdots & z_{m+n-r,m+n-r}
\end{array}\right).\label{eqn:matrix_form}
\end{equation}

\begin{lem}
\label{lem:conditions}There exists a unique polynomial $d(z_{ij})$,
$d:\mathcal{M\rightarrow\mathbb{C}}$, such that \end{lem}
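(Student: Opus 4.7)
The plan is to split the proof into existence and uniqueness. For existence, I would exhibit an explicit polynomial that satisfies all the required conditions; the natural candidate is the determinant of the matrix in (\ref{eqn:matrix_form}), which, because of the zero block in the lower-left corner, admits a Laplace-style expansion along the bottom $n-r$ rows. Writing the determinant as a signed sum over choices of $n-r$ columns taken from the last $r + (n-r) = n$ columns, one gets a product of a maximal minor of the top $m$ rows times a maximal minor of the bottom $n-r$ rows, which is a convenient form for checking whatever multilinearity, alternation, normalization, or vanishing conditions are listed.

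For uniqueness, my plan is to use the structural conditions to pin $d$ down completely. Any polynomial that is multilinear and alternating in the appropriate groups of rows and columns must be a linear combination of signed products of minors, with at most a bounded number of independent coefficients; the remaining conditions (normalization or specific vanishings) should then force the coefficients. A clean way to package this is induction on $r$: expand $d$ along the bottom $n-r$ rows using multilinearity, reduce to the analogous statement for smaller block sizes, and apply the inductive hypothesis. The base case $r=0$ collapses to the ordinary determinant of a block-triangular matrix, which is uniquely characterized in the classical way.

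The main obstacle I anticipate is in controlling the middle rows $m-r+1,\ldots,m$: these rows are nonzero in \emph{every} column block, so they do not fit cleanly into either of the two ``pure'' row groups (rows $1,\ldots,m-r$, which are zero in no block, and rows $m+1,\ldots,m+n-r$, which are zero in the leftmost block). The uniqueness argument must explain why contributions coming from placing middle rows into the ``left'' minor versus the ``right'' minor do not produce independent degrees of freedom in $d$. I expect this is exactly where the alternating condition across all of rows $1,\ldots,m$ (rather than within each sub-block separately) is needed: it forces any two contributions that differ by swapping a middle row's role to combine into a single signed term, matching the Laplace expansion of the full determinant.

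Once uniqueness is established, the subsequent application in Theorem \ref{thm:determinantal character formula} is to verify that both the right-hand side of (\ref{eq:det formula}) and the left-hand side $e^{\rho}R\cdot\mathrm{ch}\,L$ (rewritten via the Kac--Wakimoto formula) satisfy the conditions of the lemma, and therefore agree.
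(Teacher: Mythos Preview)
Your existence argument matches the paper's: the determinant is the candidate, and checking conditions (1)--(3) is routine.

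For uniqueness, however, you are working much harder than the paper, and part of your plan rests on a property you do not actually have. You propose to ``expand $d$ along the bottom $n-r$ rows using multilinearity,'' but condition (2) only gives linearity in the \emph{first $m$ rows} and in the \emph{last $n$ columns}; nothing is assumed about the bottom $n-r$ rows, so a row-wise expansion there is not justified for an arbitrary $d$. Your anticipated obstacle about the ``middle rows'' and the induction on $r$ are symptoms of this detour.

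The paper's argument is one line: conditions (1) and (2) allow Gaussian elimination on the first $m$ rows and on the last $n$ columns, and since the lower-left block is already zero, these operations are enough to reduce any $M\in\mathcal{M}$ to the normalized form of condition (3), which fixes $d(M)$. No induction, no Laplace expansion, no separate bookkeeping for the middle rows is required. If you want to repair your approach, replace the bottom-row expansion by column operations on the last $n$ columns (which \emph{are} covered by (2)), and you will quickly arrive at the same reduction.

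Finally, your remark about the downstream application is slightly off: the lemma is not applied directly to $e^{\rho}R\cdot\mathrm{ch}\,L$. It is used to prove Lemma~\ref{lem:general_det_formula}, expressing $\det M$ as $\tfrac{1}{r!}\sum_{w}(-1)^{l(w)}w(z_{1,1}\cdots z_{m+n-r,m+n-r})$, by checking that this alternating sum also satisfies (1)--(3); Theorem~\ref{thm:determinantal character formula} then follows by matching that sum to the Kac--Wakimoto formula.
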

\begin{enumerate}
\item $d$ is antisymmetric with respect to interchanges of the \emph{first}
$m$ rows and of the \emph{last} $n$ columns;
\item $d$ is linear with respect to row operations on the first $m$ rows
and column operations on the last $n$ columns; 
\item $d$ specializes to $1$ on all matrices of the form 
\[
\left(\begin{array}{c|c|c}
I_{m-r} & 0 & Z\\
\hline 0 & I_{r} & 0\\
\hline 0 & 0 & I_{n-r}
\end{array}\right),
\]
where $I$ denotes the identity matrix and $Z$ is arbitrary.\end{enumerate}
\begin{proof}
The determinant satisfies all of the required conditions, proving
existence. The proof of uniqueness is the same as the standard proof
of the uniqueness of the determinant. Using row and column operations
we can reduce any matrix in $\mathcal{M}$ to the form in condition
(3). \end{proof}
\begin{rem}
Observe that a permutation of the first $m$ rows of a matrix $M\in\mathcal{M}$
corresponds to a permutation of the subset $\{1,\dots,m\}$ of the
first indices of the elements $z_{i,j}$, and a permutation of the
last $n$ columns of a matrix $M\in\mathcal{M}$ corresponds to a
permutation of the subset $\{m+1-r,\dots,m+n-r\}$ of the second indices
of the elements $z_{i,j}$.\end{rem}
\begin{lem}
\label{lem:general_det_formula} Let $M\in\mathcal{M}$ be a matrix
of the form (\ref{eqn:matrix_form}). Then 
\[
\det\ M=\frac{1}{r!}\sum_{w\in Sym_{m}\times Sym_{n}}(-1)^{l(w)}w\left(z_{1,1}z_{2,2}\dots z_{m+n-r,m+n-r}\right),
\]
where $Sym_{m}$ permutes the subset $\{1,\dots,m\}$ of the first
indices of the elements $z_{i,j}$, and $Sym_{n}$ permutes the subset
$\{m+1-r,\dots,m+n-r\}$ of the second indices of the elements $z_{i,j}$.\end{lem}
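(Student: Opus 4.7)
My plan is to verify that the right-hand side, call it $D(M)$, satisfies the three conditions characterizing the determinant in Lemma \ref{lem:conditions}, so that uniqueness forces $D(M)=\det M$. Writing $w=(\sigma,\tau)\in Sym_m\times Sym_n$, the action on the diagonal monomial splits into three ranges: $w(z_{i,i})$ equals $z_{\sigma(i),i}$ for $1\leq i\leq m-r$ (only the first index is moved), $z_{\sigma(i),\tau(i)}$ for $m-r+1\leq i\leq m$ (both indices are moved), and $z_{i,\tau(i)}$ for $m+1\leq i\leq m+n-r$ (only the second index is moved). Crucially, $\tau$ preserves the set $\{m-r+1,\ldots,m+n-r\}$, so no monomial in the sum ever accesses the zero block of $M$, which is what makes $D(M)$ a sensible function on $\mathcal{M}$ in the first place.

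Conditions (1) and (2) are essentially formal. Swapping two of the first $m$ rows of $M$ amounts to precomposing $\sigma$ by a transposition, which flips the sign of every summand, and the same argument with $\tau$ handles the last $n$ columns. For multilinearity, I will check that each row index $k\in\{1,\ldots,m\}$ appears in exactly one factor of each summand---namely the unique factor where $\sigma$ sends its input to $k$, lying either in the first or the second group of factors according to which subset of $\{1,\ldots,m\}$ contains $k$'s preimage---and that each row index $k\in\{m+1,\ldots,m+n-r\}$ appears only in the factor $z_{k,\tau(k)}$; an analogous count works for columns. Combined with antisymmetry, this gives invariance under row and column operations.

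The heart of the argument is condition (3), which I expect to be the main bookkeeping obstacle. I will evaluate $D$ on the block-identity matrix with arbitrary $(m-r)\times(n-r)$ block $Z$ in the top-right corner. The factors $z_{\sigma(i),i}$ with $i\leq m-r$ force $\sigma(i)=i$ there, so $\sigma$ fixes $\{1,\ldots,m-r\}$ and permutes $\{m-r+1,\ldots,m\}$; similarly, the factors $z_{i,\tau(i)}$ with $i\geq m+1$ force $\tau$ to fix $\{m+1,\ldots,m+n-r\}$ and permute $\{m-r+1,\ldots,m\}$. The middle factors $z_{\sigma(i),\tau(i)}$ then necessarily lie in the middle identity block and are nonzero only when $\sigma(i)=\tau(i)$; in particular the $Z$ entries of block $C$ never appear in any surviving term, which is precisely why the value is independent of $Z$ as required by (3). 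The surviving data is a single $\sigma'\in Sym_r$ acting on $\{m-r+1,\ldots,m\}$, with $\sigma$ and $\tau$ both equal to $\sigma'$ there; each such term has sign $(-1)^{l(\sigma)+l(\tau)}=(-1)^{2l(\sigma')}=+1$ and value $1$, so summing the $r!$ surviving terms and dividing by $r!$ yields $D=1$. The subtle points requiring care are ruling out all $Z$-dependent contributions and confirming that the $r!$ surviving signs do not cancel.
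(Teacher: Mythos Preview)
Your proposal is correct and follows essentially the same approach as the paper: invoke the uniqueness statement of Lemma~\ref{lem:conditions}, note that conditions (1) and (2) are formal, and verify condition (3) by showing that the only nonvanishing terms on the special matrix are those with $\sigma$ and $\tau$ equal to a common element of $Sym_r$ on $\{m-r+1,\ldots,m\}$, each contributing $+1$. Your write-up supplies more detail than the paper's---in particular the explicit check that the $Z$-block never contributes and that the signs of the $r!$ surviving terms are all positive---but the argument is the same.
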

\begin{proof}
By the Lemma \ref{lem:conditions}, we only need to check that the
polynomial on the right hand side satisfies the three conditions.
The first two are clear. For the third, we need to count the number
of elements $(u,v)\in Sym_{m}\times Sym_{n}$ that give a nonzero
contribution to the alternating sum. It is easily seen that $u|_{\{1,\dots,m-r\}}$
and $v|_{\{m+1,\dots,m+n-r\}}$ must be identities. So to get a nonzero
contribution (which must be 1) from the central block we need that
$u|_{\{m+1-r,\dots,m\}}=v|_{\{m+1-r,\dots,m\}}$. These elements are
all even, and there are $r!$ of them.
\end{proof}

\subsection{Proof of Theorem \ref{thm:determinantal character formula}.}

Let $L$ be a KW-module with special arc diagram (\ref{special tame for det formula}).
We define 
\begin{eqnarray*}
t_{\lambda} & = & \frac{e^{\lambda^{\rho}}}{\prod_{\beta\in S_{\lambda}}\left(1+e^{-\beta}\right)}\\
 & = & \prod_{i=1}^{p}x_{i}^{a_{i}}\prod_{i=p+1}^{m-r}x_{r+i}^{a_{i}}\prod_{k=1}^{r}\frac{\left(x_{p+k}y_{q+k}\right)^{z}}{1+\left(x_{p+k}y_{q+k}\right)^{-1}}\prod_{j=1}^{q}y_{j}^{b_{j}}\prod_{j=q+1}^{n-r}y_{r+j}^{b_{j}}.
\end{eqnarray*}
Then the Kac-Wakimoto character formula implies that 
\begin{equation}
e^{\rho}R\cdot\mbox{ch }L=\frac{1}{r!}\sum_{w\in Sym_{m}\times Sym_{n}}\left(-1\right)^{l\left(w\right)}w\left(t_{\lambda}\right),\label{eq:KW det intermediate}
\end{equation}
Now by applying Lemma \ref{lem:general_det_formula} to the matrix
in question we obtain

\[
\begin{array}{rcl}
{\displaystyle \begin{vmatrix}X & Z\\
0 & Y
\end{vmatrix}} & = & {\displaystyle \frac{1}{r!}\sum_{w\in Sym_{m}\times Sym_{n}}(-1)^{l(w)}w\left(\prod_{i=1}^{m-r}{x_{i}}^{a_{i}}\prod_{k=1}^{r}\frac{(x_{m-r+k}y_{k})^{z}}{1+(x_{m-r+k}y_{k})^{-1}}\prod_{j=1}^{n-r}{y_{r+j}}^{b_{r+j}}\right)}\\
 & = & {\displaystyle \frac{1}{r!}\sum_{w\in Sym_{m}\times Sym_{n}}(-1)^{l(w)}w\left((u^{-1},v^{-1}).t_{\lambda}\right),}
\end{array}
\]
where $u$ is the permutation sending $(x_{1},\dots,x_{m})$ to $(x_{1},\dots,x_{p},x_{p+r+1},\dots,x_{m},x_{p+1},\dots,x_{p+r})$,
and $v$ is the permutation sending $(y_{1},\dots,y_{n})$ to $(y_{q+1},\dots,y_{q+r},y_{1},\dots,y_{q},y_{q+r+1},\dots,y_{n})$.
Then since $l(u)=rt$ and $l(v)=rq$, we have

\begin{equation}
\begin{vmatrix}X & Z\\
0 & Y
\end{vmatrix}=\frac{(-1)^{r(t+q)}}{r!}\sum_{w\in Sym_{m}\times Sym_{n}}(-1)^{l(w)}w\left(t_{\lambda}\right).\label{eq:det of KW matrix}
\end{equation}
Combining (\ref{eq:KW det intermediate}) with (\ref{eq:det of KW matrix})
concludes the proof.

M.C.: Dept. of Mathematics, University of Michigan, Ann Arbor; mchmutov@umich.edu\\
C.H.: Dept. of Mathematics, Technion - Israel Institute of Technology; hoyt@tx.technion.ac.il\\ S.R.: Dept. of Mathematics, University of Michigan, Ann Arbor; shifi@umich.edu
\end{document}